\documentclass[11pt]{amsart}

\usepackage{graphicx}
\usepackage{tikz-cd}
\usepackage{amssymb}
\usepackage{amscd}
\usepackage{amsmath}
\usepackage{amsthm}
\usepackage[noend]{algpseudocode}
\usepackage{algorithm}
\usepackage{enumerate}
\usepackage[all]{xy}
\usepackage{color}
\usepackage{ulem}
\usepackage{comment}
\usepackage{bm}

\allowdisplaybreaks

%後で消す
%\usepackage{showkeys, hyperref}
%\usepackage{hyperref}

% THEOREMS
\newtheorem{thm}{Theorem}[section]
\newtheorem*{mainthm}{Main Theorem}
\newtheorem{lem}[thm]{Lemma}
\newtheorem{cor}[thm]{Corollary}
\newtheorem{prop}[thm]{Proposition}

\theoremstyle{definition}

\newtheorem{defn}[thm]{Definition}
\newtheorem{rem}[thm]{Remark}

%enumeratepackege

% COUNTERS
%\renewcommand{\thesection}{\thechapter.\arabic{section}}
%\renewcommand{\theequation}{\thesection.\arabic{theorem}}
%\renewcommand{\theequation}{\thesection.\arabic{equation}}
%\renewcommand{\thefigure}{\thechapter.\arabic{figure}}
%\newcommand\step{\stepcounter{theorem}}

% SET-RELATED MACROS
%\newcommand{\set}[1]{\left| {#1}\right|}

\newcommand\erase{\bgroup\markoverwith{\textcolor{red}{\rule[.5ex]{2pt}{0.4pt}}}\ULon}

% BOLD LETTERS

% BLACKBOARD BOLD LETTERS

\newcommand{\N}{{\mathbb{N}}}

\newcommand{\Z}{{\mathbb{Z}}}

% HOMOTOPY SYMBOL -- CHANGE THIS

% FRAK LETTERS
%\newcommand{\cF}{{\mathfrak{f}}} % FOR MULTIVALUED MAPS

% CALIGRAPHIC LETTERS

% San Serif LETTERS

% MISC CHARACTERS
%\newcommand{\Chi}{\raise .75ex\hbox{$\chi$}}

% MISC COMMANDS
%\newcommand{\proof}{\noindent{\em Proof:$\quad$}}
%\newcommand{\eproof}{\hfill{\vrule height5pt width5pt depth0pt}\medskip}

%  Maps and Arrows

%\def\setof#1{\left\{{#1}\right\}}
%% For <blah>:
%\def\ang#1{\langle {#1} \rangle}
%% For <blah1, blah2>
%\def\ip#1#2{\left\langle {#1},{#2} \right\rangle}
%% For math words without mbox
%\def\w#1{\mbox{#1}}

%  Vector

% INDEXING MACRO

%persistence

%distance

% geometric

%\newcommand{\st}{\mathop{\mathrm{st}}\nolimits}

%linear algebra

\newcommand{\Hom}{{\rm Hom}}

%algebraic

  % to be removed

%\newcommand{\rank}{\hbox{\rm rank}}
\newcommand{\rank}{\mathop{\rm rank}}
\newcommand{\image}{\mathop{\rm im}}
\newcommand{\kernel}{\mathop{\rm ker}}
\newcommand{\coker}{\mathop{\rm coker}}

\newcommand{\weight}{{\rm wt}}

\newcommand{\irr}{\mathop{\rm Irr}}

\DeclareMathOperator{\GL}{GL}

\DeclareMathOperator{\Exttilde}{\widetilde{\rm Ext}}
\DeclareMathOperator{\out}{Out}
\DeclareMathOperator{\inward}{In}

\newcommand{\Mon}{
\begin{tikzpicture}
\coordinate (1) at (0,0);
\coordinate (2) at (0.18,0);
\coordinate (3) at (0,0.18);
\coordinate (4) at (0.18,0.18);
\filldraw[fill = black, draw=black] (1) circle (1pt);
\filldraw[fill = white, draw=black] (2) circle (1pt);
\filldraw[fill = white, draw=black] (3) circle (1pt);
\filldraw[fill = white, draw=black] (4) circle (1pt);
%\draw (1) -- (2);
%\draw (1) -- (3);
%\draw (2) -- (4);
%\draw (3) -- (4);
\end{tikzpicture}
}

\newcommand{\Mtw}{
\begin{tikzpicture}
\coordinate (1) at (0,0);
\coordinate (2) at (0.18,0);
\coordinate (3) at (0,0.18);
\coordinate (4) at (0.18,0.18);
\filldraw[fill = white, draw=black] (1) circle (1pt);
\filldraw[fill = black, draw=black] (2) circle (1pt);
\filldraw[fill = white, draw=black] (3) circle (1pt);
\filldraw[fill = white, draw=black] (4) circle (1pt);
%\draw (1) -- (2);
%\draw (1) -- (3);
%\draw (2) -- (4);
%\draw (3) -- (4);
\end{tikzpicture}
}

\newcommand{\Mth}{
\begin{tikzpicture}
\coordinate (1) at (0,0);
\coordinate (2) at (0.18,0);
\coordinate (3) at (0,0.18);
\coordinate (4) at (0.18,0.18);
\filldraw[fill = white, draw=black] (1) circle (1pt);
\filldraw[fill = white, draw=black] (2) circle (1pt);
\filldraw[fill = black, draw=black] (3) circle (1pt);
\filldraw[fill = white, draw=black] (4) circle (1pt);
%\draw (1) -- (2);
%\draw (1) -- (3);
%\draw (2) -- (4);
%\draw (3) -- (4);
\end{tikzpicture}
}

\newcommand{\Mfo}{
\begin{tikzpicture}
\coordinate (1) at (0,0);
\coordinate (2) at (0.18,0);
\coordinate (3) at (0,0.18);
\coordinate (4) at (0.18,0.18);
\filldraw[fill = white, draw=black] (1) circle (1pt);
\filldraw[fill = white, draw=black] (2) circle (1pt);
\filldraw[fill = white, draw=black] (3) circle (1pt);
\filldraw[fill = black, draw=black] (4) circle (1pt);
%\draw (1) -- (2);
%\draw (1) -- (3);
%\draw (2) -- (4);
%\draw (3) -- (4);
\end{tikzpicture}
}

\newcommand{\Mfi}{
\begin{tikzpicture}
\coordinate (1) at (0,0);
\coordinate (2) at (0.18,0);
\coordinate (3) at (0,0.18);
\coordinate (4) at (0.18,0.18);
\filldraw[fill = black, draw=black] (1) circle (1pt);
\filldraw[fill = black, draw=black] (2) circle (1pt);
\filldraw[fill = white, draw=black] (3) circle (1pt);
\filldraw[fill = white, draw=black] (4) circle (1pt);
\draw (1) -- (2);
%\draw (1) -- (3);
%\draw (2) -- (4);
%\draw (3) -- (4);
\end{tikzpicture}
}

\newcommand{\Msi}{
\begin{tikzpicture}
\coordinate (1) at (0,0);
\coordinate (2) at (0.18,0);
\coordinate (3) at (0,0.18);
\coordinate (4) at (0.18,0.18);
\filldraw[fill = black, draw=black] (1) circle (1pt);
\filldraw[fill = white, draw=black] (2) circle (1pt);
\filldraw[fill = black, draw=black] (3) circle (1pt);
\filldraw[fill = white, draw=black] (4) circle (1pt);
%\draw (1) -- (2);
\draw (1) -- (3);
%\draw (2) -- (4);
%\draw (3) -- (4);
\end{tikzpicture}
}

\newcommand{\Mse}{
\begin{tikzpicture}
\coordinate (1) at (0,0);
\coordinate (2) at (0.18,0);
\coordinate (3) at (0,0.18);
\coordinate (4) at (0.18,0.18);
\filldraw[fill = white, draw=black] (1) circle (1pt);
\filldraw[fill = black, draw=black] (2) circle (1pt);
\filldraw[fill = white, draw=black] (3) circle (1pt);
\filldraw[fill = black, draw=black] (4) circle (1pt);
%\draw (1) -- (2);
%\draw (1) -- (3);
\draw (2) -- (4);
%\draw (3) -- (4);
\end{tikzpicture}
}

\newcommand{\Mei}{
\begin{tikzpicture}
\coordinate (1) at (0,0);
\coordinate (2) at (0.18,0);
\coordinate (3) at (0,0.18);
\coordinate (4) at (0.18,0.18);
\filldraw[fill = white, draw=black] (1) circle (1pt);
\filldraw[fill = white, draw=black] (2) circle (1pt);
\filldraw[fill = black, draw=black] (3) circle (1pt);
\filldraw[fill = black, draw=black] (4) circle (1pt);
%\draw (1) -- (2);
%\draw (1) -- (3);
%\draw (2) -- (4);
\draw (3) -- (4);
\end{tikzpicture}
}

\newcommand{\Mni}{
\begin{tikzpicture}
\coordinate (1) at (0,0);
\coordinate (2) at (0.18,0);
\coordinate (3) at (0,0.18);
\coordinate (4) at (0.18,0.18);
\filldraw[fill = black, draw=black] (1) circle (1pt);
\filldraw[fill = black, draw=black] (2) circle (1pt);
\filldraw[fill = black, draw=black] (3) circle (1pt);
\filldraw[fill = white, draw=black] (4) circle (1pt);
\draw (1) -- (2);
\draw (1) -- (3);
%\draw (2) -- (4);
%\draw (3) -- (4);
\end{tikzpicture}
}

\newcommand{\Mte}{
\begin{tikzpicture}
\coordinate (1) at (0,0);
\coordinate (2) at (0.18,0);
\coordinate (3) at (0,0.18);
\coordinate (4) at (0.18,0.18);
\filldraw[fill = white, draw=black] (1) circle (1pt);
\filldraw[fill = black, draw=black] (2) circle (1pt);
\filldraw[fill = black, draw=black] (3) circle (1pt);
\filldraw[fill = black, draw=black] (4) circle (1pt);
%\draw (1) -- (2);
%\draw (1) -- (3);
\draw (2) -- (4);
\draw (3) -- (4);
\end{tikzpicture}
}

\newcommand{\Mel}{
\begin{tikzpicture}
\coordinate (1) at (0,0);
\coordinate (2) at (0.18,0);
\coordinate (3) at (0,0.18);
\coordinate (4) at (0.18,0.18);
\filldraw[fill = black, draw=black] (1) circle (1pt);
\filldraw[fill = black, draw=black] (2) circle (1pt);
\filldraw[fill = black, draw=black] (3) circle (1pt);
\filldraw[fill = black, draw=black] (4) circle (1pt);
\draw (1) -- (2);
\draw (1) -- (3);
\draw (2) -- (4);
\draw (3) -- (4);
\end{tikzpicture}
}

% COLORS AND GRAY SCALES

%\definecolor{gray85}{gray}{0.85} % 15%
%\definecolor{gray8}{gray}{0.8} % 20%
%\definecolor{gray7}{gray}{0.7} % 30%
%\definecolor{gray6}{gray}{0.6} % 40%
%\definecolor{gray5}{gray}{0.5} % 50%
%\definecolor{gray4}{gray}{0.4} % 60%
%\definecolor{gray35}{gray}{0.35} % 65%

\textheight=21cm
\topmargin=0.5cm
\textwidth=15cm
\oddsidemargin=0.8cm
\evensidemargin=0.8cm

\usepackage{biblatex}
\addbibresource{main.bib}

\begin{document}

%\title{Crystal structures on 2D persistence modules}
\title{Geometric construction of Kashiwara crystals on 
multiparameter persistence}

\author{Yasuaki Hiraoka}
        \address{Kyoto University Institute for Advanced Study, Kyoto University.  Inamori Research Institute for Science.}
        \email{hiraoka.yasuaki.6z@kyoto-u.ac.jp}
\author{Kohei Yahiro}
        \address{Kyoto University Institute for Advanced Study, Kyoto University.}
        \email{yahiro.kohei.5r@kyoto-u.ac.jp}

\begin{abstract}
We establish a geometric construction of Kashiwara crystals on the irreducible components of the varieties of multiparameter persistence modules. Our approach differs from the seminal work of Kashiwara and Saito, as well as subsequent related works, by emphasizing commutative relations rather than preprojective relations  for a given quiver. Furthermore, we provide explicit descriptions of the Kashiwara operators in the fundamental case of 2-parameter persistence modules, offering concrete insights into the crystal structure in this setting.
\end{abstract}

\keywords{Multiparameter persistence module, Kashiwara crystal, Irreducible component, Representation theory}

\subjclass{16G20, 55N31}

\maketitle

%%%%%%%%%%%%%%%%%%%%%%%%%%%%%%%%%%%%%%%%%%%%%%%%%%%%%%%%%%%%%%%%%%%%%%
\section{Introduction}
In recent years, persistent homology has experienced rapid growth and widespread adoption across multiple areas of mathematics \cite{cmvj,ams}. 
The concept of persistent homology was first introduced in \cite{elz}, where the authors investigated filtered simplicial complexes with one-parameter and developed several concepts to characterize topological persistence along filtration sequences.
Subsequently, in \cite{zc}, the algebraic structure now known as interval decompositions was elucidated by reformulating persistent homology in terms of finitely generated modules over a principal ideal domain and examining their indecomposable decompositions. A connection with quiver representation theory was later established in \cite{dc}, which initiated the significant research activity utilizing quiver representations to generalize persistence modules to those with multiple filtration parameters \cite{BL,eh,oudot}.

In these algebraic studies employing quiver representations, the primary objective is to determine indecomposable decompositions of given persistence modules. However, in multiparameter settings, most representation categories are classified as infinite type, rendering the derivation of such decompositions practically infeasible. 
In response to this fundamental limitation, several research groups have proposed closely interrelated concepts such as interval approximations \cite{asashiba}, generalized persistence diagrams \cite{km}, and signed barcodes \cite{boo}, wherein an appropriate set of interval indecomposables can  recover various invariants of the original persistence module.
While this approach circumvents the difficulty of indecomposable decompositions, several critical issues remain unresolved, including the interpretation of negative multiplicities in approximations and the development of computationally efficient methods for determining these approximations.

Incidentally, throughout the historical development, geometric investigations of representation varieties defined on quivers have led to remarkable advances in quiver representation theory. Seminal contributions include Lusztig's introduction of canonical bases \cite{lusztig} and the geometric construction of Kashiwara crystal by Kashiwara-Saito \cite{KS}. Notably, in \cite{KS}, the Kashiwara crystal $B(\infty)$ of the negative half of quantum groups was constructed using irreducible components of the varieties of certain representations defined on a given quiver. 
This result demonstrates that irreducible decompositions possess rich geometric and combinatorial structures that enhance our understanding of quiver representation varieties. We also refer the paper \cite{GLS2,NT} for recent generalizations of this field. 

This observation motivates our present work. Inspired by \cite{KS}, we investigate irreducible components of the varieties of multiparameter persistence modules. Here, $d$-parameter persistence modules (Definition \ref{defn:2D_pm}) are defined as representations on a bound quiver  $\overrightarrow{G}_{m}$ (Definition \ref{defn:grid}), called the equioriented commutative $d$-grid of size $m=(m_1,\dots,m_d)$. 
We demonstrate that these irreducible components form a Kashiwara crystal with the root system determined by $\overrightarrow{G}_{m}$. 
More precisely, let $I$ be the set of vertices in $\overrightarrow{G}_{m}$, $E_C(\nu)$ be the representation variety of $d$-parameter persistence modules with dimension vector $(\nu_i)_{\in I}\in\N^I$, and  $B_C(\nu)$ be the set of all irreducible components of the variety $E_C(\nu)$.  Our main theorem is stated as follows. 

\begin{mainthm}[See Theorem \ref{thm:BC}, \ref{thm:ef_an} and \ref{thm:2x2ef} for precise statements]
    $B_C= \bigsqcup_\nu B_{C}(\nu)$ has a nontrivial structure of the Kashiwara crystal $(B_C, \weight, (\varepsilon_i)_{i \in I}, (\varphi_i)_{i \in I}, (\tilde e_i)_{i \in I}, (\tilde f_i)_{i \in I})$ with respect to the root system associated with the quiver $\overrightarrow{G}_{m}$.
\end{mainthm}

Here, the maps $(\weight, (\varepsilon_i)_{i \in I}, (\varphi_i)_{i \in I})$ and the 
Kashiwara operators
$((\tilde e_i)_{i \in I}, (\tilde f_i)_{i \in I})$
are defined in Section \ref{sect:constBC}. 
Theorem \ref{thm:BC} proves that $(B_C, \weight, (\varepsilon_i)_{i \in I}, (\varphi_i)_{i \in I}, (\tilde e_i)_{i \in I}, (\tilde f_i)_{i \in I})$ actually becomes a crystal, whereas Theorem \ref{thm:ef_an} and \ref{thm:2x2ef} imply that this crystal is nontrivial. 
The proof of our Main Theorem fundamentally follows the original approach in \cite{KS}, but with an essential distinction in our setting. Specifically, although both Cartan matrices are identical, \cite{KS} examines the double quiver with preprojective relations, whereas we investigate the quiver itself with commutative relations. This difference causes non-equidimensional irreducible components of $E_C(\nu)$ (Remark \ref{ddv}). Furthermore, we cannot directly apply the original construction of the Kashiwara operators to our setting, as irreducible components exhibit different behaviors under pullback and pushforward operations via the variety of extensions.

In general, providing an explicit description of the crystal structure presents significant challenges. However, in the specific cases  $\overrightarrow{G}_{(n)}$ (corresponding to $A_n$ quiver with the equi-orientation) and $\overrightarrow{G}_{(2,2)}$, we have successfully derived  explicit descriptions of the Kashiwara operators (Theorem \ref{thm:ef_an} and Theorem \ref{thm:2x2ef}), as well as a complete parametrization of all irreducible components (Corollary \ref{cor:ex_irr}). These results provide concrete insights into the crystal structure within this particular setting.

As in the framework of Kashiwara and Saito \cite{KS}, there exists a $\ast$-crystal structure on $B_C$ (Theorem \ref{thm:BCstar}). In \cite{KS}, this $\ast$-crystal structure establishes the existence of the Kashiwara embedding and proves that their crystal is isomorphic to $B(\infty)$. In our setting, although we can also construct a $\ast$-crystal structure on $B_C$, it cannot be embedded into $B(\infty)$ (Corollary \ref{cor:BC2x2property}), which follows as a consequence of our explicit description of the crystal structure for $\overrightarrow{G}_{(2,2)}$.

The structure of this paper is as follows. In Section \ref{sect:prel}, we review the definitions of Kashiwara crystals,  quiver representations, multiparameter persistence modules, and the geometric construction of $B(\infty)$ developed in \cite{KS}. In Section \ref{sect:constBC}, we establish two crystal structures on the set $B_C$ of all irreducible components of the representation varieties of multiparameter persistence modules. In Section \ref{sect:An} and \ref{sect:2x2}, we examine the cases $\overrightarrow{G}_{(n)}$ and $\overrightarrow{G}_{(2,2)}$, respectively, in detail. 
As a corollary of Theorem \ref{thm:2x2ef}, we demonstrate that the crystal $B_C$ cannot be embedded into the crystal $B(\infty)$, is not upper seminormal, and is connected (Corollary \ref{cor:BC2x2property}). 
In the appendix we give an explicit description of the general module of $\overrightarrow{G}_{2,2}$ in each irreducible component of $E_C(\nu)$.

\section{Preliminary}\label{sect:prel}
In this paper, we denote by $\Z=\{0,\pm 1, \pm 2,\dots\}$ and $\N=\{0,1,2,\dots\}$ the sets of integers and of natural numbers including 0, respectively, and fix a field $\Bbbk$.

\subsection{Crystals}

Let us review the definition of Kac-Moody root systems following Kac \cite[\S 1.1]{K}. 
\begin{defn}\label{defn:cartan}\cite{K}
Let $I$ be a finite set. A \emph{generalized Cartan matrix} is a square matrix $A=(A_{ij})_{i,j\in I}$ with integral entries satisfying the following properties: 
    \begin{enumerate}
        \item $A_{ii} = 2$ for all $i$. 
        \item $A_{ij}$ is nonpositive if $i \neq j$.
        \item $A_{ij} \neq 0$ for $i \neq j$ implies $A_{ji} \neq 0$.
    \end{enumerate}
\end{defn}
For each generalized Cartan matrix $A$, we have a realization.  

\begin{defn}\cite{K}
A \emph{realization} of an $n \times n$ generalized Cartan matrix $A$ is a triple $(\mathfrak h, \Pi, \check\Pi)$ consisting of a complex vector space $\mathfrak h$, $\Pi =\{ \alpha_1, \ldots, \alpha_n \} \subset \mathfrak h^{*}$, and 
$\check\Pi = \{ h_1, \dots, h_n  \}\subset \mathfrak h$ satisfying the following properties: 
\begin{enumerate}
    \item $\Pi$ and $\check\Pi$ are linearly independent.  
    \item $A_{ij} =\langle h_i, \alpha_j \rangle$. 
    \item $\dim_{\mathbb C} \mathfrak h = 2n - \rank A$.
\end{enumerate}
\end{defn}
It is known that we can construct a realization of $A$ such that there exists a free abelian group $P\subset \mathfrak h^*$ (weight lattice) with the following properties: 
\begin{enumerate}
    \item $\mathbb C \otimes_{\mathbb Z}P \cong \mathfrak h^*$.
    \item $\langle h_i, \lambda \rangle \in \Z$ for any $i \in I$ and $\lambda \in P$. 
    \item $\alpha_i \in P$ for any $i \in I$.
\end{enumerate}

Next we recall the definition of crystals and their morphisms due to Kashiwara \cite{Kashiwara}.
For the definition of quantum groups, we need to fix a weight lattice $P$.
\begin{defn}\label{defn:crystal}\cite{Kashiwara}
    A \emph{crystal} (or \emph{Kashiwara crystal}) associated with a generalized Cartan matrix $A$ and a weight lattice $P$ is a datum 
    $(B, \weight, (\varepsilon_i)_{i \in I}, (\varphi_i)_{i \in I}, (\tilde e_i)_{i \in I}, (\tilde f_i)_{i \in I})$ 
    consisting of a set $B$ and maps $\weight: B\rightarrow P$, 
    $\varepsilon_i, \varphi_i: B\rightarrow \Z\sqcup\{-\infty\}$, and 
    $\tilde e_i, \tilde f_i: B\rightarrow B \sqcup \{ 0 \}$ satisfying the following conditions: 
    \begin{enumerate}
        \item $ \varphi_i(b) = \varepsilon_i(b) + \langle h_i , \weight(b) \rangle$. 
        \item If $b \in B$ and $\tilde e_i b \in B$, then we have 
        $\weight(\tilde e_i b) = \weight(b) + \alpha_i$, 
        $\varepsilon_i(\tilde e_i b) = \varepsilon_i(b) - 1 $ 
        and 
        $\varphi_i(\tilde e_i b) = \varphi_i(b) + 1 $. 
        \item If $b \in B$ and $\tilde f_i b \in B$, then we have 
        $\weight(\tilde f_i b) = \weight(b) - \alpha_i$, 
        $\varepsilon_i(\tilde f_i b) = \varepsilon_i(b) + 1 $ 
        and 
        $\varphi_i(\tilde f_i b) = \varphi_i(b) - 1 $. 
        \item For $b, b' \in B$ and $i \in I$, 
        $b = \tilde e_i b'$ if and only if 
        $\tilde f_i b = b'$.
        \item For $b \in B$, if $\varphi_i(b)=-\infty$ then $\tilde e_i b = \tilde f_i b = 0$. 
    \end{enumerate}
    The maps $\tilde e_i$ and $ \tilde f_i$ are called the \emph{Kashiwara operators}.
\end{defn}
The notion of crystals was originally introduced to describe good bases of representations of quantum groups (see \cite{Kashiwara}). 

\begin{defn}\label{defn:crystalmor}\cite{Kashiwara}
    Let $B_1, B_2$ be crystals. 
    A \emph{morphism} of crystals is a map $\rho: B_1 \to B_2 \sqcup \{ 0 \}$ satisfying the following properties: 
    \begin{enumerate}
        \item If $b \in B_1$ satisfies $\rho(b) \neq 0$, 
        we have $\weight(b) = \weight(\rho(b)) $, $\varepsilon_i(b) = \varepsilon_i (\rho(b)) $,  $\varphi_i(b) = \varphi_i (\rho(b))$.
        \item If $b \in B_1$ satisfies $\rho(\tilde e_i(b)) \neq 0$ and $\rho(b) \neq 0$, 
        we have $\tilde e_i \rho(b) = \rho(\tilde e_i b) $.
        \item If $b \in B_1$ satisfies $\rho(\tilde f_i(b)) \neq 0$ and $\rho(b) \neq 0$, 
        we have $\tilde f_i \rho(b) = \rho(\tilde f_i b) $.
    \end{enumerate}
\end{defn}

\begin{defn}\label{defn:crystal_graph}
The \emph{crystal graph} associated with a crystal $B$ is a colored oriented graph whose vertex set is given by  $B$ and an $i$-colored oriented edge from $b$ to $b'$ for $b,b'\in B$ and $i\in I$ is assigned if $b'=\tilde f_i b$. A crystal $B$ is called \emph{connected} if the corresponding crystal graph is connected.
\end{defn}

\subsection{Quivers and their representations}

In this subsection, we review the definition of bound quivers and its representations. We also introduce the bound quiver $\overrightarrow{G}_{m}$ on which multiparameter persistence modules are defined in this paper.

\begin{defn}\label{defn:quiver}
    A \emph{quiver} is a quadruple $(I, E, s, t)$ consisting of a set $I$ of vertices, a set $E$ of arrows, and two maps $s$, $t: E\rightarrow I$ that assign the source and target of arrows, respectively. 
    A \emph{path} $\mu$ of length $n\in\N$ is defined by a sequence $\alpha_n\cdots\alpha_1$ of arrows satisfying $t(\alpha_i)=s(\alpha_{i+1})$ for $i=1,\dots,n-1$. Here, $s(\alpha_1)$ and $t(\alpha_n)$ are called the source and target vertices of the path $\mu$, respectively. Multiple paths $\mu_1,\dots,\mu_m$ are called \emph{parallel} if their source and target vertices coincide. 
    A \emph{relation} $\rho$ in $Q$ is a formal sum $\rho=\sum_{i=1}^m c_i \mu_i$ of parallel paths $\mu_i$, where each path $\mu_i$ is of length at least 2 and $c_i\in\Bbbk$. A pair $(Q, R)$ of a quiver $Q$ and a set $R$ of relations is called a \emph{bound quiver}. 
\end{defn}

In this paper, we always assume that the quiver $Q$ has no edge loops, i.e., $\{e\in E~|~s(e)=t(e)\}=\emptyset$. 

\begin{defn} 
A \emph{representation} $M=(M_i,f_\alpha)_{i\in I, \alpha\in E}$ of a quiver $Q$ is defined by the following data:
\begin{enumerate}
\item A (finite dimensional) $\Bbbk$-vector space $M_i$ for each vertex $i\in I$.
\item A $\Bbbk$-linear map $f_\alpha: M_i\rightarrow M_j$ for each arrow $\alpha:i\to j$.
\end{enumerate}
The evaluation of $M$ on the path $\mu=\alpha_n\cdots \alpha_1$ is defined by the composition $f_\mu=f_{\alpha_n}\circ \dots\circ f_{\alpha_1}$. This definition is extended to a relation $\rho =\sum_{i=1}^m c_i\mu_i$ by 
$f_\rho=\sum_{i=1}^m c_i f_{\mu_i}$.
A \emph{representation of a bound quiver} $(Q,R)$ is a representation of $Q$ satisfying $f_\rho=0$ for all $\rho\in R$.
For a given representation $M$, the \emph{dimension vector} is defined by $(\dim M_i)_{i \in I} \in \mathbb{N}^{I}$.
%We say that $M$ is a representation of $(Q,R)$ of \emph{dimension vector} $(\dim M_i)_{i \in I} \in \mathbb{N}^I$. 
\end{defn}

\begin{defn}\label{defn:quiverCartan}
    A symmetric generalized Cartan matrix $A=A(Q)$
    associated with a quiver $Q=(I,E,s,t)$  is defined  by 
    \begin{align}
        &A_{ii} =2,  \\
        &A_{ij} = -\#\{e\in E~|~s(e)=i, t(e)=j\} - 
        \#\{e\in E~|~s(e)=j, t(e)=i\}
        \text{ if  $i\neq j$}.
    \end{align}
%    This is a symmetric matrix. 
\end{defn}

For $\ell \in \mathbb N$, we write $[1, \ell] = \{ i \in \mathbb N ~\vert~ 1 \leq i \leq \ell \}$.

\begin{defn}\label{defn:grid}(\cite{asashiba})
    For $m=(m_1,\dots,m_d)\in \N^d$, the \emph{equioriented commutative $d$-grid} $\overrightarrow{G}_{m}$ is defined as a bound quiver $(Q,C)$, where $Q=(I,E,s,t)$ is given by $I= [1, m_1] \times \dots \times [1, m_d]$, 
    \begin{align*}
    E = \{ (i, j) \in I \times I ~\vert&~i=(i_1,\dots,i_d), j=(j_1,\dots,j_d),\\
     &~\text{and $i_{\ell}+1=j_\ell$, $i_k=j_k$  ($k\neq\ell$) for $\ell=1,\dots,d$
     }
     \}, 
    \end{align*}
    and
    $s$ and $t$ are the first and the second projections from $E \subset I \times I$ to $I$, endowed  with 
    the commutative relations $C$ that any two parallel paths are identical.
\end{defn}

For example, $\overrightarrow{G}_{( m_1,m_2)}$ is illustrated as
\begin{center}
\begin{tikzcd}
    (1,m_2) \arrow[r, ]& (2,m_2) \arrow[r, ] & \cdots \arrow[r, ]& (m_1,m_2)\\
    \vdots \arrow[u, ]\arrow[ur, phantom, ""] & \vdots \arrow[u, ] &  \arrow[ur, phantom, "\rotatebox{135}{\vdots}"]& \vdots \arrow[u, ]\\
    (1,2) \arrow[u, ] \arrow[r, ]\arrow[ur, phantom, ""]  & (2,2) \arrow[u, ] \arrow[ur, phantom, "\rotatebox{135}{\vdots}"]  \arrow[r, ]& \cdots \arrow[r, ]& (m_1,2) \arrow[u, ]\\
    (1,1) \arrow[r, ] \arrow[u, ] \arrow[ur, phantom, "\circlearrowleft"] & (2,1) \arrow[u, ]\arrow[r, ] \arrow[ur, phantom, ""] & \cdots \arrow[r, ] \arrow[ur, phantom, ""] & (m_1,1) \arrow[u, ]
\end{tikzcd},
\end{center}
where the commutative relations $C$ are generated by
    \[
    ((i_1,i_2+1),(i_1+1, i_2+1))((i_1,i_2),(i_1, i_2+1)) - ((i_1+1,i_2),(i_1+1, i_2+1))((i_1,i_2),(i_1+1, i_2))
    \]
    for all $1 \leq i_1 \leq m_1-1$, $1 \leq i_2 \leq m_2-1$.

\begin{defn}\label{defn:2D_pm}
For $m=(m_1,\dots,m_d)\in \N^d$, a representation of $\overrightarrow{G}_{m}$ is called a \emph{$d$-parameter persistence module}. 
\end{defn}

\subsection{Geometric construction of $B(\infty)$}

Let $Q=(I,E,s,t)$ be a quiver. 
We associate  to $Q$ the symmetric generalized Cartan matrix $A(Q)$, the quantum group $U_{q}=U_{q}(\mathfrak{g}(A(Q)))$, and the crystal basis  $B(\infty)$ of the negative half $U_{q}^{-}$ of $U_{q}$ \cite{Kashiwara}. 
One of the main results of the paper by Kashiwara and Saito \cite{KS} establishes a geometric construction of the crystal $B(\infty)$ by irreducible components of certain representation varieties. 
In this subsection, we briefly recall its construction, following sections 3, 4, and 5 in \cite{KS}. Here, we basically use the original notations.

%Let $Q$ be a quiver. To $Q$ we associate a symmetric generalized Cartan matrix $A(Q)$ as in definition \ref{defn:quiverCartan} and to $A(Q)$ we associate a realization and a weight lattice $P$. 

Let $\bar Q = (I, E \sqcup \bar E, s, t)$ be the double of the quiver $Q$ defined by adding opposite arrows $\bar E = \left\{ \bar \tau ~\vert~ \tau \in E \right\}$ with $s(\bar \tau) =t(\tau) $ and $t(\bar \tau) = s(\tau)$. 
To each dimension vector $(\nu_i)_{i \in I}$, let us assign a weight $\nu=-\sum_{i \in I} \nu_i \alpha_i \in - \sum_{i \in I} \mathbb{N}\alpha_i =: -\textrm{Q}_+ \subset P$.
We also define the preprojective relation 
\begin{equation}\label{eq:preprojective}
\mu_i=\sum_{\tau \in E, s(\tau)= i} \bar \tau \tau - \sum_{\tau \in E, t(\tau)= i} \tau \bar \tau
\end{equation}
on $\bar{Q}$ for each $i\in I$. 
Then, for each $\nu \in -{\rm Q}_+$, the representation variety of the bound quiver $(\bar Q, \left\{ \mu_i ~|~ i \in I \right\})$ is defined as follows 
\[
\Lambda(\nu)= \left\{ 
\left. B \in \prod_{\tau \in E\sqcup\bar E} \Hom(\Bbbk^{\nu_{s(\tau)}},\Bbbk^{\nu_{t(\tau)}}) ~\right\vert~
B_{\mu_i} = 0 \text{ for all } i \in I \text{ and } B \text{ is nilpotent} 
\right\}.
\]
Here, a representation $B$ is called nilpotent if there exists $n\in \N$ such that we have $B_\mu = 0$ for all paths $\mu$ of length $n$.
Let
$B(\infty, \nu) = \irr \Lambda(\nu)$ be the set of all irreducible components in the representation variety $\Lambda(\nu)$.

\begin{thm}\label{thm:KS}
\begin{enumerate}
    \item \cite[Theorem 5.2.6.]{KS} There is a crystal structure on $\bigsqcup_{\nu \in -{\rm Q}_+} B(\infty, \nu)$.
    \item \cite[Theorem 5.3.2.]{KS} The crystal $\bigsqcup_{\nu \in -{\rm Q}_+} B(\infty, \nu)$ is isomorphic to $B(\infty)$. 
\end{enumerate}
\end{thm}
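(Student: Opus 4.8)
The plan is to equip $\bigsqcup_{\nu\in-{\rm Q}_+}B(\infty,\nu)$ with crystal data read off from the representation-theoretic geometry of the nilpotent varieties $\Lambda(\nu)$, to build the Kashiwara operators from a ``modification'' correspondence comparing generic representations across adjacent dimension vectors, and finally to identify the result with $B(\infty)$ through a $*$-structure together with an abstract characterization theorem.

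For part (1) I would first fix the combinatorial data on $Z\in B(\infty,\nu)=\irr\Lambda(\nu)$. Set $\weight(Z)=\nu$. For a representation $B\in\Lambda(\nu)$ and $i\in I$ put
\[
\varepsilon_i(B)=\dim_\Bbbk\coker\Bigl(\,\bigoplus_{\tau\in E\sqcup\bar E,\;t(\tau)=i}\Bbbk^{\nu_{s(\tau)}}\xrightarrow{\;(B_\tau)_\tau\;}\Bbbk^{\nu_i}\Bigr),
\]
the multiplicity of the simple module $S_i$ at vertex $i$ in the cosocle of $B$. As the corank of a matrix whose entries are regular functions of $B$, the function $\varepsilon_i$ is upper semicontinuous on $\Lambda(\nu)$; hence it attains its minimum on a dense open subset of each irreducible component, and I define $\varepsilon_i(Z)$ to be this generic value and $\varphi_i(Z)=\varepsilon_i(Z)+\langle h_i,\weight(Z)\rangle$. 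Axiom (1) of Definition \ref{defn:crystal} then holds by construction.

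The core of part (1) is the construction of $\tilde e_i$ and $\tilde f_i$ on irreducible components. Fixing $i$, stratify $\Lambda(\nu)=\bigsqcup_{c\ge 0}\Lambda(\nu)_{i,c}$ by the locally closed loci $\{\varepsilon_i=c\}$. The essential geometric input---Lusztig's analysis of these varieties, reproduced in \cite{KS}---is that for $c\ge 1$ the modification sending $B$ to a quotient by one copy of $S_i$ in the cosocle realizes a correspondence of smooth maps with irreducible fibres linking $\Lambda(\nu)_{i,c}$ and $\Lambda(\nu+\alpha_i)_{i,c-1}$. This induces a bijection between the irreducible components of the two strata, and passing to closures in the ambient varieties gives mutually inverse bijections between $\{Z\in\irr\Lambda(\nu):\varepsilon_i(Z)=c\}$ and $\{Z'\in\irr\Lambda(\nu+\alpha_i):\varepsilon_i(Z')=c-1\}$. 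I define $\tilde e_iZ$ to be the component matched with $Z$ (with $\tilde e_iZ=0$ when $\varepsilon_i(Z)=0$) and $\tilde f_i$ as the inverse assignment. By construction $\weight$ shifts by $\pm\alpha_i$ and $\varepsilon_i$ by $\mp1$, so axioms (2)--(3) follow once one notes $\langle h_i,\alpha_i\rangle=2$; axiom (4) is precisely the asserted bijectivity, and axiom (5) is vacuous since every $\varphi_i$ is finite. The main obstacle here is exactly this fibration property of the modification map: controlling how irreducible components behave under pullback and pushforward, and verifying that closures of generic strata are again irreducible components of the full variety.

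For part (2) I would introduce the $*$-structure arising from the anti-involution of the preprojective algebra (concretely, the duality $B\mapsto B^*$ exchanging $\tau\leftrightarrow\bar\tau$ and transposing), which preserves each $\Lambda(\nu)$ and therefore induces an involution $*$ on $\irr\Lambda(\nu)$; setting $\tilde e_i^*={*}\,\tilde e_i\,{*}$, $\tilde f_i^*={*}\,\tilde f_i\,{*}$ and $\varepsilon_i^*=\varepsilon_i\circ{*}$ produces a second crystal structure. To conclude $\bigsqcup_\nu B(\infty,\nu)\cong B(\infty)$ I would then apply the characterization of $B(\infty)$ as the crystal carrying such a $*$-structure and a distinguished highest-weight element subject to the compatibility axioms between $\tilde e_i$ and $\tilde e_j^*$. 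Verifying the hypotheses reduces to: exhibiting $b_0=[\Lambda(0)]$ of weight $0$; checking that any $Z$ with $\nu\ne0$ satisfies $\varepsilon_i(Z)>0$ for some $i$ and $\varepsilon_i^*(Z)>0$ for some $i$ (because a nonzero nilpotent representation has nonzero cosocle and, dually, nonzero socle); and establishing the commutation of $\tilde e_i$ with $\tilde e_j^*$ for $i\ne j$. This last point---showing that the simultaneous $(\varepsilon_i,\varepsilon_j^*)$-strata are geometrically independent, so the mixed operators commute as prescribed---is the main obstacle of part (2), and it is where the preprojective relations $\mu_i=0$ enter decisively.
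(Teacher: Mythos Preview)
Your sketch is broadly correct and follows the same architecture as the outline given in the paper: define $\weight,\varepsilon_i,\varphi_i$ geometrically, build $\tilde e_i,\tilde f_i$ from an extension-variety correspondence, and identify the result with $B(\infty)$ via a $*$-structure plus Kashiwara's characterization theorem. There is, however, one genuine technical divergence worth flagging.

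You construct the Kashiwara operators by a \emph{direct} step-one correspondence between $\Lambda(\nu)_{i,c}$ and $\Lambda(\nu+\alpha_i)_{i,c-1}$ (quotient by a single $S_i$). The paper, following \cite{KS}, instead routes through the $c=0$ stratum: one first establishes bijections $\tilde f_i^{\,c}\colon B(\infty,\nu)_{i,0}\to B(\infty,\nu-c\alpha_i)_{i,c}$ using the correspondence $\Lambda(\nu)_{i,0}\leftarrow\Lambda'(\nu,-c\alpha_i)_0\to\Lambda(\nu-c\alpha_i)_{i,c}$, and then sets $\tilde e_i=\tilde f_i^{\,c-1}\circ(\tilde f_i^{\,c})^{-1}$ and $\tilde f_i=\tilde f_i^{\,c+1}\circ(\tilde f_i^{\,c})^{-1}$. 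The reason for the detour is that the smoothness-with-connected-fibers statement (Lemma~5.2.3 in \cite{KS}) is proved specifically over the locus $\varepsilon_i=0$: there the preprojective relation forces the fiber of $q_1$ (the space of extensions by $S_i^{\oplus c}$) to have dimension depending only on $\nu,i,c$ and not on the particular point $\bar B$. On a general stratum $\varepsilon_i=c-1$ this constancy is not immediate, so your direct claim of smoothness there would need an additional argument (or a reduction to the $c=0$ case, which brings you back to the paper's route).

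A second point you leave implicit is the pure-dimensionality of $\Lambda(\nu)$ (Lusztig's Lagrangian property). This is what guarantees that, after transporting an irreducible component through the correspondence, the closure in the target $\Lambda$ is again an irreducible component rather than merely an irreducible closed subset. Without it, ``passing to closures in the ambient varieties gives mutually inverse bijections'' is unjustified. The paper's outline sweeps this into the citation of \cite[Lemma~5.2.3, Proposition~5.2.4]{KS}, but since you are spelling out the mechanism you should name it.

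For part~(2) your use of a geometric involution $B\mapsto B^*$ is equivalent to the paper's description, which instead phrases the $*$-operators via the dual correspondence $\Lambda'(-c\alpha_i,\nu)$ (subobjects rather than quotients); these are two faces of the same construction.
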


We now outline the proof of this theorem. 
Recall that a crystal structure on $\bigsqcup_{\nu \in -\textrm{Q}_+} B(\infty, \nu)$ is given by maps $\weight, (\varepsilon_i)_{i \in I}, (\varphi_i)_{i \in I}, (\tilde e_i)_{i \in I}$ and $ (\tilde f_i)_{i \in I}$. 
For $\Lambda \in B(\infty, \nu)$, the map $\weight: \bigsqcup_{\nu \in -\textrm{Q}_+} B(\infty, \nu) \to P$ is defined by $\weight(\Lambda) = \nu$. 
For $B \in \Lambda$, we define the maps
$\varepsilon_i(B) = 
\dim \coker \left((B_\tau)_{\tau\in E\sqcup \bar E, t(\tau)=i}: 
\bigoplus_{\tau \in E \sqcup \bar E, t(\tau)= i} \Bbbk^{\nu_{s(\tau)}} 
\to \Bbbk^{\nu_i} \right)$,  
$\varepsilon_i(\Lambda) = \min_{B \in \Lambda} \varepsilon_i(B)$, and $\varphi_i(\Lambda) = \varepsilon_i(\Lambda) + \langle h_i , \weight(\Lambda) \rangle$. 
Here, we remark that taking the minimum in $\varepsilon_i(\Lambda)$ corresponds to evaluating $\varepsilon_i(B)$ at a generic point $B\in \Lambda$.

For the definition of $\tilde e_i$ and $\tilde f_i$, we need a variety parameterizing extensions of representations. 
For $\bar \nu, \nu' \in -\textrm{Q}_+$, let us define $\Lambda'(\bar \nu, \nu')$ to be the variety consisting of 
triples $B \in \Lambda(\bar \nu + \nu')$, $\bar \phi=(\bar \phi_i)_{i \in I}$ and $\phi'=(\phi'_i)_{i \in I}$ which give an exact sequence
\[
0 \to 
\Bbbk^{\bar \nu_{i}}
\xrightarrow{\bar \phi_i}
\Bbbk^{\bar \nu_i+\nu'_{i}}
\xrightarrow{\phi'_i}
\Bbbk^{\nu'_{i}} 
\to 0 
\]
for each $i$ such that $\image \bar \phi_i$ is stable by $B$. 
By the  condition, $(B, \bar \phi, \phi') \in \Lambda'(\bar \nu, \nu')$ induces a subrepresentation $\bar B \in \Lambda(\bar \nu)$ and a quotient representation $B' \in \Lambda(\nu')$, and hence this gives a morphism 
$q_1:\Lambda'(\bar \nu, \nu') \to \Lambda(\bar \nu) \times \Lambda(\nu'), (B, \bar \phi, \phi') \mapsto (\bar B, B')$. 
Let $q_2$ be the morphism 
$q_2:\Lambda'(\bar \nu, \nu') \to \Lambda(\bar \nu + \nu'), (B, \bar \phi, \phi') \mapsto B$. 

For $i \in I$ and $c \in \mathbb{N}$, let $B(\infty, \nu)_{i,c}$ be the subset of $B(\infty, \nu)$ consisting of $\Lambda$ satisfying $\varepsilon_i(\Lambda)=c$. 
We have a decomposition $B(\infty, \nu) = \sqcup_{c \in \mathbb{N}} B(\infty, \nu)_{i,c} $. 
The Kashiwara operators will be defined on each $B(\infty, \nu)_{i,c}$. 
Let $\Lambda(\nu)_{i,c}$ be the subset of $\Lambda(\nu)$ consisting of $B$ satisfying  $\varepsilon_i(B)=c$. 
Then, we have a correspondence
\[
\Lambda(\nu )_{i,0} \xleftarrow{q_1} \Lambda'(\nu, - c \alpha_i)_0  \xrightarrow{q_2} \Lambda(\nu-c\alpha_i)_{i,c},
\]
where $\Lambda'(\nu, - c \alpha_i)_0$ is defined by $\Lambda'(\nu, - c \alpha_i)_0:=q_1^{-1}(\Lambda(\nu )_{i,0})=q_2^{-1}(\Lambda(\nu-c\alpha_i)_{i,c})$.
Lemma 5.2.3 of \cite{KS} shows that $q_1$ and $q_2$ are smooth morphisms with connected fibers, and as a consequence, this correspondence gives a bijection $\tilde f^{c}_{i}: B(\infty, \nu)_{i, 0} \to B(\infty, \nu -c \alpha_i)_{i, c}$ (\cite[Proposition 5.2.4]{KS}). 

Finally, we define Kashiwara operators as follows. 
The map $\tilde e_i : B(\infty, \nu)_{i,c} \to B(\infty, \nu + \alpha_i)_{i,c-1} \sqcup \{ 0 \}$ is defined to be $\tilde e_i \Lambda = 0$ if $c=0$ and $\tilde e_i \Lambda = \tilde f^{c-1}_{i} \circ (\tilde f^{c}_{i})^{-1} \Lambda$ if $c > 0$. 
The map $\tilde f_i : B(\infty, \nu)_{i,c} \to B(\infty, \nu - \alpha_i)_{i,c+1} \Lambda$ is defined by $\tilde f_i \Lambda = \tilde f^{c+1}_{i} \circ (\tilde f^{c}_{i})^{-1}$. We can check that this construction satisfies Definition \ref{defn:crystal}, proving Theorem \ref{thm:KS} (1). To prove Theorem \ref{thm:KS} (2), Kashiwara and Saito used a geometric construction of the Kashiwara operators $\tilde e^*_i$ and $\tilde f^*_i$,  which are defined using an automorphism $*$ of $U_q$ and a characterization of th crystal $B(\infty)$ in terms of $\tilde e^*_i$ and $\tilde f^*_i$. 
In this process, $\Lambda'(- c \alpha_i, \nu)$ is used instead of $\Lambda'(\nu, - c \alpha_i)$.

\section{Crystal structures on multiparameter persistence modules}\label{sect:constBC}

In this section, we study representations on the equioriented commutative $d$-grid $\overrightarrow{G}_{m}$ for $m=(m_1,\dots,m_d)$. 
Unlike the approach of studying  representations on the double of the quiver in \cite{KS}, we investigate the following.
%We have a generalized Cartan matrix associated to $(I, E)$ as in Definition \ref{defn:quiverCartan} and weight lattice P. 

\begin{defn}\label{modulidef}
{\rm 
For $\nu = -\sum_{i \in I} \nu_i \alpha_i \in -{\rm Q}_+$, a representation variety $E_C(\nu)$ of $\overrightarrow{G}_{m}$ is defined by
    \[E_C(\nu):= 
    \left\{
    \left.
    f=(f_{\tau}) \in \prod_{\tau \in E} 
    \Hom(\Bbbk^{\nu_{s(\tau)}},\Bbbk^{\nu_{t(\tau)}})
    ~
    \right|
    ~    
    f_\rho = 0 ~{\rm for}~\forall\rho\in C
    \right\}.
    \]
}
\end{defn}

This is the moduli variety of all $d$-parameter persistence modules of the dimension vector $(\nu_i)_{i\in I}$. 
Since the condition that $(f_{\tau})_{\tau\in E}$ satisfies $f_\rho = 0$ for all $\rho\in C$ is described by polynomial equations, $E_C(\nu)$ is an affine algebraic variety. We remark that 
$E_C(\nu)$ reduces to merely a vector space in the special case of one-parameter persistence modules (corresponding to $d=1$). 

For each $\nu$, we denote by
$B_{C}(\nu) = \irr E_C(\nu)$ the set of all irreducible components in $E_C(\nu)$, and define
    $B_C= \bigsqcup_\nu B_C(\nu)$.
The purpose of this paper is to construct a crystal structure $(B_C, \weight, (\varepsilon_i)_{i \in I}, (\varphi_i)_{i \in I}, (\tilde e_i)_{i \in I}, (\tilde f_i)_{i \in I})$ on $B_C$ in a similar way to that by Kashiwara and Saito \cite{KS}. 
%A crystal is a tuple $(B, \weight, (\varepsilon_i)_{i \in I}, (\varphi_i)_{i \in I}, (\tilde e_i)_{i \in I}, (\tilde f_i)_{i \in I})$. 
We first define the maps $\weight$, $(\varepsilon_i)_{i \in I}$ and $ (\varphi_i)_{i \in I}$ as follows.

\begin{defn}\label{defn:epsilonphi}\hfill
    \begin{enumerate} 
    \item For $\Lambda \in B_C(\nu)$, $\weight(\Lambda) := \nu$. 
    \item For $f \in E_C(\nu)$, $\varepsilon_i(f) := 
    \dim \coker ( (f_{\tau})_{\tau \in E, t(\tau) =i}:
    \bigoplus_{\tau \in E, t(\tau) =i} \Bbbk^{\nu_{s(\tau)}} \to \Bbbk^{\nu_i})$. 
    \item For $\Lambda \in B_C(\nu)$, $\varepsilon_i(\Lambda) := \min_{f \in \Lambda} \{ \varepsilon_i(f) \}$.
    \item For $\Lambda \in B_C(\nu)$, $\varphi_i(\Lambda) := \varepsilon_i(\Lambda) + \langle h_i, \weight(\Lambda) \rangle$.
    \end{enumerate}
\end{defn}

In order to define the Kashiwara operators, we introduce the following variety.

\begin{defn}\label{defEprime}
    For $\bar\nu,\nu'\in -{\rm Q}_+$, let $E'_C(\bar\nu,\nu')$ be the subvariety of 
    \[
        \prod_{i \in I} \Hom(\Bbbk^{\bar\nu_i}, \Bbbk^{\bar\nu_i+\nu'_{i}}) \times 
        \prod_{i \in I} \Hom(\Bbbk^{\bar\nu_i+\nu'_{i}}, \Bbbk^{\nu'_{i}}) \times 
        \prod_{\tau \in E} \Hom(\Bbbk^{\bar\nu_{s(\tau)}+\nu'_{s(\tau)}}, \Bbbk^{\bar\nu_{t(\tau)}+\nu'_{t(\tau)}}) 
    \]
    consisting of $((\bar\phi_i)_{i \in I}, (\phi'_i)_{i \in I}, (f_{\tau})_{\tau\in E} ) $
    satisfying the following properties.
    \begin{enumerate} 
        \item 
        For each $i \in I$, 
        \[ 
        0 \to \Bbbk^{\bar\nu_{i}}
        \xrightarrow{\bar\phi_i}  
        \Bbbk^{\bar\nu_i+\nu'_{i}}
        \xrightarrow{\phi'_i} 
        \Bbbk^{\nu'_{i}} \to 0 
        \]
        is a short exact sequence. 
        \item $(f_{\tau})_{\tau\in E}$ satisfies $C$. 
        \item For each $\tau \in E$, we have $\image \bar\phi_{t(\tau)} \supset \image f_{\tau} \circ \bar\phi_{s(\tau)} $.
    \end{enumerate}  
\end{defn}

We may think of $E'_C(\bar\nu,\nu')$ as the moduli variety consisting of all extensions of representations of $\overrightarrow{G}_{m}$ with fixed dimension vectors. It should  be noted that 
$(f_\tau)_{\tau\in E}$ induces a subrepresentation $(\bar f_{\tau})_{\tau \in E} \in E_C(\bar\nu)$ and a quotient representation 
$(f'_{\tau})_{\tau \in E} \in E_C(\nu')$ 
due to (3) in Definition \ref{defEprime}. From this assignment, we define 
morphisms $\varpi_1 : E'_C(\bar\nu,\nu') \to E_C(\bar\nu) $ by 
$((\bar\phi_i)_{i \in I}, (\phi'_i)_{i \in I}, (f_{\tau})_{\tau \in E} )\mapsto (\bar f_{\tau})_{\tau \in E}$, 
and 
$\varpi_2 : E'_C(\bar\nu,\nu') \to E_C(\bar\nu+\nu') $ by 
$((\bar\phi_i)_{i \in I}, (\phi'_i)_{i \in I}, (f_{\tau})_{\tau \in E} ) \mapsto (f_{\tau})_{\tau \in E}$. 

From here to Lemma \ref{prop:ef}, we fix $i \in I$ and let $\nu +\alpha_i \in -{\rm Q}_+$.
The correspondence to construct the Kashiwara operators $\tilde e_i, \tilde f_i$ is given by 
\[
E_C(\nu+\alpha_i) \xleftarrow{\varpi_1} E'_C(\nu+\alpha_i, -\alpha_i) \xrightarrow{\varpi_2} E_C(\nu).
\]
In this setting, for $((\bar\phi_j)_{j \in I}, (\phi'_j)_{j \in I}, (f_{\tau})_{\tau \in E}) \in E'_C(\nu+\alpha_i, -\alpha_i)$ 
and for each $j \in I \setminus \{ i \}$, 
$\bar\phi_j$ is an isomorphism and $\phi'_j$ is zero. 

The basic idea to construct the Kashiwara operators is to define 
$\tilde f_i \bar \Lambda := \overline{\varpi_2 \varpi^{-1}_1(\bar \Lambda)}$ 
and 
$\tilde e_i \Lambda :=\overline{\varpi_1 \varpi^{-1}_2(\Lambda)}$
for irreducible components $\bar \Lambda \in B_C(\nu+\alpha_i)$ and $\Lambda \in B_C(\nu)$, where $\overline{\varpi_2 \varpi^{-1}_1(\bar \Lambda)}$ and $\overline{\varpi_1 \varpi^{-1}_2(\Lambda)}$ mean the closures of $\varpi_2 \varpi^{-1}_1(\bar \Lambda)$ and $\varpi_1 \varpi^{-1}_2(\Lambda)$, respectively.  
However, these are not irreducible in general. 
In order to obtain irreducible algebraic sets, we need to replace the  irreducible components $\bar\Lambda$
and $\Lambda$ by suitable open subsets. 
This is the content of Proposition \ref{prop:varpi} below. To state and prove Proposition \ref{prop:varpi}, we introduce the following preparatory definition and lemmas. Here, we assign a lexicographical order on $I$ and denote it by $i<_{\text{lex}}j$ for $i,j\in I$.

\begin{defn}\label{defFib}
\hfill
\begin{enumerate}
\item $\out_1(i):=\{j\in I~|~(i,j)\in E\}$.
\item $\out_2(i):=\{k\in I~|~(j_1,k),(j_2,k)\in E~\text{for $j_1,j_2\in \out_1(i)$ and $j_1\neq j_2$}\}$.
\item For $j_1,j_2\in \out_1(i)$ with $j_1<_\text{lex}j_2$, a vertex $k(j_1,j_2)\in\out_2(i)$ is defined by the unique vertex satisfying $(j_1,k(j_1,j_2)), (j_2,k(j_1,j_2))\in E$. Note that any $k\in \out_2(i)$ is uniquely expressed as $k(j_1,j_2)$ by $j_1,j_2\in\out_1(i)$.
\begin{comment}
\item A linear map $F_i(f):\oplus_{j\in\out_1(i)}\Bbbk^{\nu_j}\rightarrow\oplus_{k\in \out_2(i)}\Bbbk^{\nu_k}$ is defined by the following diagram
\[
\begin{tikzcd}        \oplus_{j\in\out_1(i)}\Bbbk^{\nu_j} \arrow["F_i(f)"]{rr} \arrow["F_i(f)_{j_1j_2}"']{drr}& &  \oplus_{k\in\out_2(i)}\Bbbk^{\nu_k} \arrow[d, "p"]\\
          & & \Bbbk^{\nu_{k(j_1,j_2)}}
\end{tikzcd},
\]
where $p$ is the projection,  $j_1<_{\text{lex}}j_2$, and $F_i(f)_{j_1j_2}=(g_j): \oplus_{j\in\out_1(i)}\Bbbk^{\nu_j}\rightarrow \Bbbk^{\nu_{k(j_1,j_2)}}$ is given by 
\[
g_j=\begin{cases}
f_{(j_1,k(j_1,j_2))}, & j=j_1,\\
-f_{(j_2,k(j_1,j_2))}, & j=j_2,\\
0, & {\rm otherwise}.
\end{cases}
\]
Here, in the case of $|\out_1(i)|=1$, the linear map is understood as $F_i(f)=0$. 
\end{comment}

\item A linear map $F_i(f):\oplus_{j\in\out_1(i)}\Bbbk^{\nu_j}\rightarrow\oplus_{k\in \out_2(i)}\Bbbk^{\nu_k}$ is defined for each direct summand 
$F_i(f)_{jk}: \Bbbk^{\nu_j} \to \Bbbk^{\nu_k} $          
by 
\[F_i(f)_{jk}
=
\begin{cases}
f_{(j,k(j,j'))} & \text{if there is $j' \in \out_1(i)$ with $j<_{\text{lex}}j'$ and $k(j,j') = k$},\\
-f_{(j,k(j',j))} & \text{if there is $j' \in \out_1(i)$ with $j'<_{\text{lex}}j$ and $k(j',j) = k$},\\
0 & \text{otherwise}.
\end{cases}
\]
Here, in the case of $|\out_1(i)|=1$, the linear map is understood as $F_i(f)=0$.

\item $\Exttilde_i(f):=\kernel F_i(f)$.
\end{enumerate}
\end{defn}

\begin{rem}
    Let $\cdots \to Q_2 \to Q_1 \to Q_0 \to S_i \to 0$ be a minimal projective resolution of $S_i$. 
    Then we have $Q_0 \cong P_i$, $Q_1 \cong \sum_{j \in \out_{1}(i)} P_j$ and $Q_2 \cong \sum_{j \in \out_{2}(i)} P_j$, where $P_j$ is a projective cover of $S_j$ for $j \in I$. 
    This leads to an isomorphism $\Exttilde_i(f) \cong \ker(\Hom_{\overrightarrow{G}_{m}}(Q_{1}, f) \to \Hom_{\overrightarrow{G}_{m}}(Q_{2}, f))$ and hence a surjection $\Exttilde_i(f) \to {\rm Ext}^1_{\overrightarrow{G}_{m}}(S_i, f)$.
\end{rem}

\begin{comment}
\begin{defn}\label{defFib}
    For $f=(f_{\tau})_{\tau \in E} \in E_C(\nu)$, a vector space $\Exttilde_i(f)$ is defined as follows. 
\begin{enumerate}
\item If there are no arrows starting from $i$, set $\Exttilde_i(f) = 0$. 
%
\item If there is only one arrow $\tau$ starting from $i$, set $\Exttilde_i(f) = \Bbbk^{t(\tau)}$.
%
\item Suppose that there are two arrows $\tau_1$ and $\tau_2$ starting from $i$. 
Let $\tau_3$ and $\tau_4$ be the arrows such that $s(\tau_3) = t(\tau_1) (=: j_1)$, $s(\tau_4) = t(\tau_2) (=: j_2)$, and $t(\tau_3) = t(\tau_4) (=: j_3)$
    \begin{center}
\begin{tikzcd}
    \Bbbk^{\nu_{j_2}} \arrow[r, "f_{\tau_4}"] & \Bbbk^{\nu_{j_3}} \\
    \Bbbk^{\nu_{i}} \arrow[r, "f_{\tau_1}"'] \arrow[u, "f_{\tau_2}"]& \Bbbk^{\nu_{j_1}} \arrow[u, "f_{\tau_3}"']
\end{tikzcd}.
\end{center}
Then, set $\Exttilde_i(f)$ by 
    $
    \Exttilde_i(f) = \ker \left( 
    (\begin{bmatrix}{cc}
    f_{\tau_3} & -f_{\tau_4}
    \end{bmatrix}): \Bbbk^{\nu_{j_1}} \oplus \Bbbk^{\nu_{j_2}} \to \Bbbk^{\nu_{j_3}} \right). 
    $
\end{enumerate}
\end{defn}
\end{comment}

\begin{lem}\label{SES}
    For $m,n \in \mathbb{N}$, let $S(m,n)$ be a subvariety of 
    $\Hom (\Bbbk^{m}, \Bbbk^{m+n}) \times \Hom (\Bbbk^{m+n}, \Bbbk^{n})$
    consisting of pairs $(\alpha, \beta)$ of maps such that 
    \[
    0 \to \Bbbk^m \xrightarrow{\alpha} \Bbbk^{m+n} \xrightarrow{\beta} \Bbbk^{n} \to 0
%    \text{ is a short exact sequence.}
    \]
    is a short exact sequence. 
    Then $S(m,n)$ is a homogeneous variety under the action of $\GL_{m+n}$ given by $g\cdot (\alpha, \beta) =(g \circ \alpha , \beta \circ g^{-1})$ for $g\in\GL_{m+n}$. In particular $S(m,n)$ is smooth and connected. 
%   The stabilizer of this action is the unipotent radical of a parabolic subgroup whose Levi subgroup is isomorphic to $\GL_m(\Bbbk) \times \GL_n(\Bbbk)$. 
\end{lem}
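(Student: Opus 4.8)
The plan is to exhibit $S(m,n)$ as a single orbit of the $\GL_{m+n}$-action, from which homogeneity is immediate and smoothness and connectedness follow from standard facts about homogeneous spaces. First I would check that the action really lands in $S(m,n)$: if $(\alpha,\beta)$ fits into the displayed short exact sequence and $g\in\GL_{m+n}$, then $0 \to \Bbbk^m \xrightarrow{g\alpha} \Bbbk^{m+n} \xrightarrow{\beta g^{-1}} \Bbbk^n \to 0$ is again exact, because pre- and post-composition with an isomorphism preserves injectivity and surjectivity, and $\image(g\alpha) = g(\image\alpha) = g(\ker\beta) = \ker(\beta g^{-1})$ using exactness of the original sequence. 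Hence $g\cdot(\alpha,\beta)\in S(m,n)$, so the action is well defined.

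Next I would fix the base point $(\alpha_0,\beta_0)$, where $\alpha_0\colon \Bbbk^m \hookrightarrow \Bbbk^{m+n}$ is the inclusion of the first $m$ coordinates and $\beta_0\colon \Bbbk^{m+n} \twoheadrightarrow \Bbbk^n$ is the projection onto the last $n$ coordinates; this clearly lies in $S(m,n)$. The heart of the argument is transitivity: given an arbitrary $(\alpha,\beta)\in S(m,n)$, I would construct $g\in\GL_{m+n}$ with $g\alpha_0 = \alpha$ and $\beta_0 g^{-1} = \beta$. Concretely, define $g$ on the standard basis $e_1,\dots,e_{m+n}$ of $\Bbbk^{m+n}$ by $g(e_j)=\alpha(e_j)$ for $1\le j\le m$ (the columns of $\alpha$, a basis of $\image\alpha=\ker\beta$ since $\alpha$ is injective) and $g(e_{m+k})=w_k$ for $1\le k\le n$, where $w_k\in\Bbbk^{m+n}$ is any vector with $\beta(w_k)$ equal to the $k$-th standard basis vector of $\Bbbk^n$ (such $w_k$ exist as $\beta$ is surjective). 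The $w_k$ are linearly independent modulo $\ker\beta$, so together with $\alpha(e_1),\dots,\alpha(e_m)$ they form a basis of $\Bbbk^{m+n}$ and $g\in\GL_{m+n}$. By construction $g\alpha_0=\alpha$; and checking the identity $\beta g = \beta_0$ on the basis (both sides kill each $\alpha(e_j)$ and send $w_k$ to the $k$-th basis vector) yields $\beta_0 g^{-1}=\beta$. Thus $S(m,n)$ is the single orbit $\GL_{m+n}\cdot(\alpha_0,\beta_0)$, i.e.\ a homogeneous variety.

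Finally I would read off the remaining two assertions. Connectedness, indeed irreducibility, follows since $S(m,n)$ is the image of the irreducible variety $\GL_{m+n}$ under the orbit morphism $g\mapsto(g\alpha_0,\beta_0 g^{-1})$, and the image of an irreducible variety under a morphism is irreducible. Smoothness follows from the general fact that an orbit of a smooth algebraic group is a smooth locally closed subvariety, being isomorphic to the homogeneous space $\GL_{m+n}/\mathrm{Stab}(\alpha_0,\beta_0)$. I expect the only genuinely delicate step to be the simultaneous verification that the explicit $g$ is invertible and realizes both $\alpha$ and $\beta$ at once — in particular getting the direction of the action right for the second coordinate, using $\beta g=\beta_0$ rather than $\beta_0 g=\beta$ — whereas the passage from ``single orbit'' to smoothness and connectedness is standard once homogeneity is in hand.
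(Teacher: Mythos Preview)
Your proof is correct and is essentially a detailed expansion of the paper's own proof, which consists of the single sentence ``This is proved by the Gaussian elimination of matrices.'' Your explicit construction of $g$ from a basis of $\image\alpha$ together with chosen $\beta$-preimages of the standard basis of $\Bbbk^n$ is exactly what Gaussian elimination produces, so the two arguments coincide in substance.
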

\begin{proof}
This is proved by the Gaussian elimination of matrices. 
\end{proof}

\begin{lem}\label{lem:smooth}
Let $\varpi: X\rightarrow Y$ is a morphism on algebraic varieties. Suppose that, for any point $y\in Y$, there exists an open neighborhood $V$ of $y$ and a smooth variety $Z$ such that the diagram
\[
\begin{tikzcd}
        \varpi^{-1}(V) \arrow[rightarrow,"\varpi|_{\varpi^{-1}(V)}"]{rr} \arrow[leftarrow,"q","\cong"']{d}& &  V\\
        Z\times V \arrow[rightarrow,"p"']{urr} & &
\end{tikzcd}
\]
commutes, where $p$ is the projection and $q$ is an isomorphism. Then, $\varpi$ is smooth. 
\end{lem}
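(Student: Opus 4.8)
The plan is to assemble the statement from three standard stability properties of smooth morphisms: smoothness is preserved under base change, it is preserved under composition (and every isomorphism is smooth), and it is local on the target, in the sense that a morphism is smooth as soon as its restriction over each member of an open cover of the target is smooth.

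First I would rewrite the hypothesis concretely. Fixing $y \in Y$ together with the associated data $(V, Z, q)$, the commutativity of the diagram says $\varpi|_{\varpi^{-1}(V)} \circ q = p$, and since $q$ is an isomorphism this yields $\varpi|_{\varpi^{-1}(V)} = p \circ q^{-1}$, where $q^{-1} : \varpi^{-1}(V) \xrightarrow{\ \cong\ } Z \times V$ and $p : Z \times V \to V$ is the projection.

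Next I would check that $p$ is smooth. Interpreting the product as the fiber product $Z \times_{\spec \Bbbk} V$ over the ground field, $p$ is the base change of the structure morphism $Z \to \spec \Bbbk$ along $V \to \spec \Bbbk$. Because $Z$ is a smooth variety, the structure morphism $Z \to \spec \Bbbk$ is smooth, and smoothness is stable under base change, so $p$ is smooth. As $q^{-1}$ is an isomorphism, hence smooth, and composites of smooth morphisms are smooth, the factorization $\varpi|_{\varpi^{-1}(V)} = p \circ q^{-1}$ shows that the restriction $\varpi|_{\varpi^{-1}(V)}$ is smooth.

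Finally, letting $y$ range over $Y$, the open sets $V$ form an open cover of $Y$ over each of which $\varpi$ restricts to a smooth morphism; since smoothness is local on the target, $\varpi$ itself is smooth. I do not anticipate a genuine obstacle here, as the argument is purely a bookkeeping of standard properties of smooth morphisms. The only points needing a little care are that the isomorphism $q$ is taken over $V$ (i.e.\ the diagram commutes), which is exactly what transports the smoothness of $p$ to $\varpi|_{\varpi^{-1}(V)}$, and that smoothness must be verified locally on the \emph{target} rather than merely fiberwise or on the source.
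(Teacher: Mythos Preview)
Your proof is correct and follows essentially the same approach as the paper, which simply cites Hartshorne, Chapter~III, Proposition~10.1(b) (stability of smoothness under base change), leaving the remaining standard facts (composition with an isomorphism and locality on the target) implicit. You have spelled out exactly those steps.
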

\begin{proof}
The statement easily follows from Proposition 10.1 (b) in \cite{Hartshorne}.
\end{proof}

\begin{prop}\label{prop:varpi} \hfill
    \begin{enumerate}
    \item The morphism $\varpi_1$ is surjective and $\varpi_1^{-1}(\bar f)$ is a vector bundle over $ S(\nu_i-1, 1) \times \prod_{j \neq i} \GL_{\nu_j}$ whose fibers are isomorphic to $\Exttilde_i(\bar f)$.
    
    \item Let $\bar \Lambda \in B_C(\nu+\alpha_i)$. Define $\bar \Lambda^{1}$ to be the open subset of $\bar \Lambda$ consisting of points $\bar f$ at which the function $\bar f \mapsto \dim \Exttilde_i(\bar f)$ takes the minimum value in $\bar\Lambda$. 
    Then the restriction of $\varpi_1$ to $\varpi^{-1}_1(\bar \Lambda^{1})$ is a smooth surjective morphism to $\bar \Lambda^{ 1}$ with connected fibers. 
    
    \item The image of the morphism $\varpi_2$ is the closed subset of $E_C(\nu)$ consisting of points $f$ with $\varepsilon_i(f)>0$. 
    %If $f \in E_C(\nu)$ satisfies $\varepsilon_i(f)>0$, the the fiber $\varpi^{-1}_2(f)$ is locally isomorphic to $GL_{\nu_i-1} \times GL_1 \times \mathbb P^*(\Bbbk^{\varepsilon_i(f)})\times \prod_{j \neq i} GL_{\nu_j}$. Here $\mathbb P^*(\Bbbk^{\varepsilon_i(f)})$ is the dual projective space consisting of all hyperplanes in $\Bbbk^{\varepsilon(f)}$. 
    For $f \in E_C(\nu)$ with $\varepsilon_i(f)>0$, $\varpi^{-1}_2(f)$ is a principal $\GL_{\nu_i-1}$ bundle over $\prod_{j \neq i} \GL_{\nu_j}\times \left((\Bbbk^{\varepsilon_i(f)})^* \setminus \{ 0 \} \right)$. 
    
    \item Assume that $\Lambda \in B_C(\nu)$ satisfies $\varepsilon_i(\Lambda)>0$. 
    Define $\Lambda^{ 2}$ to be the open subset of $\Lambda$ consisting of points at which the function $\varepsilon_i$ takes the minimum value in $\Lambda$.
    Then the restriction of $\varpi_2$ to $\varpi^{-1}_2(\Lambda^{2})$ is a smooth surjective morphism to $\Lambda^{ 2}$ with connected fibers. 
    \end{enumerate}
\end{prop}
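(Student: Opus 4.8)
The plan is to analyze $\varpi_1$ and $\varpi_2$ fibrewise, and then promote the pointwise pictures to smoothness via Lemma~\ref{lem:smooth} and Lemma~\ref{SES}. Parts (1) and (3) are the pointwise fibre computations; parts (2) and (4) upgrade them over the open loci where the relevant dimension is constant.

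For (1), I would unwind a point $((\bar\phi_j),(\phi'_j),(f_\tau))$ of $\varpi_1^{-1}(\bar f)$. Since $\bar\nu_j=\nu_j$ and $\nu'_j=0$ for $j\ne i$, the exact sequences force $\bar\phi_j\in\GL_{\nu_j}$, $\phi'_j=0$ for such $j$, while at $i$ the pair $(\bar\phi_i,\phi'_i)$ ranges over $S(\nu_i-1,1)$; this identifies the splitting data with $S(\nu_i-1,1)\times\prod_{j\ne i}\GL_{\nu_j}$, smooth and connected by Lemma~\ref{SES}. Having fixed the splitting, the compatibility $f_\tau\circ\bar\phi_{s(\tau)}=\bar\phi_{t(\tau)}\circ\bar f_\tau$ determines $f_\tau$ on $\image\bar\phi_{s(\tau)}$, and the only remaining freedom is the restriction of $f_\tau$ to a complementary line for the arrows $\tau$ with $s(\tau)=i$, i.e. a vector $(v_j)_{j\in\out_1(i)}\in\bigoplus_{j\in\out_1(i)}\Bbbk^{\nu_j}$. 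The crucial point, which I would verify by inspecting each unit square of $\overrightarrow{G}_m$, is that the relations $C$ hold automatically at every square \emph{not} based at $i$ (their incoming edges factor through $\image\bar\phi$, where $\bar f$ already satisfies $C$), while at the squares based at $i$ they reduce precisely to $f_{(j,k)}(v_j)=f_{(j',k)}(v_{j'})$ for $k\in\out_2(i)$, that is to $F_i(\bar f)\big((v_j)\big)=0$. Hence the fibre over each splitting is $\kernel F_i(\bar f)=\Exttilde_i(\bar f)$, giving the vector-bundle description; surjectivity follows since $(v_j)=0$ always lies in this kernel.

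For (2), on $\bar\Lambda^{1}$ the function $\bar f\mapsto\dim\Exttilde_i(\bar f)=\dim\kernel F_i(\bar f)$ is constant, so $F_i$ has locally constant rank and its kernels assemble into a vector bundle over $\bar\Lambda^{1}$. Together with the globally trivial factor $S(\nu_i-1,1)\times\prod_{j\ne i}\GL_{\nu_j}$ of part (1) this yields local isomorphisms $\varpi_1^{-1}(V)\cong Z\times V$ with $Z=S(\nu_i-1,1)\times\prod_{j\ne i}\GL_{\nu_j}\times\Bbbk^{r}$ (here $r$ is the constant value of $\dim\Exttilde_i$ on $\bar\Lambda^1$) smooth, so Lemma~\ref{lem:smooth} gives smoothness; surjectivity is part (1), and the fibres are connected since $S(\nu_i-1,1)$, the $\GL$'s and the vector space are connected. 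For (3), a point of $E'_C$ over $f$ records a subrepresentation with quotient $S_i$, equivalently a surjection $f\twoheadrightarrow S_i$ with chosen frames. Such a morphism is a functional $\phi'_i$ on $\Bbbk^{\nu_i}$ vanishing on $\image\big((f_\tau)_{t(\tau)=i}\big)$, and it is surjective iff $\phi'_i\ne 0$, which is possible exactly when $\varepsilon_i(f)>0$; condition (3) of Definition~\ref{defEprime} is then automatic, so $\image\varpi_2=\{f\mid\varepsilon_i(f)>0\}$, closed as the locus where the incoming map drops rank. Given such $f$, the nonzero functionals form $(\Bbbk^{\varepsilon_i(f)})^*\setminus\{0\}$, the choice of $\bar\phi_j\in\GL_{\nu_j}$ ($j\ne i$) is free, and for each $\phi'_i$ the isomorphisms $\bar\phi_i\colon\Bbbk^{\nu_i-1}\xrightarrow{\sim}\kernel\phi'_i$ form a $\GL_{\nu_i-1}$-torsor, yielding the asserted principal bundle.

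Finally for (4), on $\Lambda^{2}$ the value $\varepsilon_i\equiv\varepsilon_i(\Lambda)=:c>0$ is constant, so $\Lambda^2\subset\image\varpi_2$ and the cokernels of the incoming maps form a rank-$c$ vector bundle over $\Lambda^2$; trivializing it locally lets me parametrize the functionals $\phi'_i$ and the frames uniformly and obtain $\varpi_2^{-1}(V)\cong Z\times V$ with $Z$ the smooth total space of the principal bundle of part (3), so smoothness again follows from Lemma~\ref{lem:smooth}. The fibres are connected because $\GL_{\nu_i-1}$ and $\prod_{j\ne i}\GL_{\nu_j}$ are connected and $(\Bbbk^{c})^*\setminus\{0\}$ is connected for $c\ge 1$. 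I expect the genuine obstacle to be the fibre computation in (1): one must check carefully that the commutative relations impose \emph{no} constraint on $(v_j)$ beyond $F_i(\bar f)(v)=0$ and exactly recover $\Exttilde_i$, and then convert the pointwise constant-rank statements on $\bar\Lambda^1$ and $\Lambda^2$ into honest local product decompositions so that Lemma~\ref{lem:smooth} applies.
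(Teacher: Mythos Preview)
Your proposal is correct and follows essentially the same strategy as the paper's proof: a fibrewise analysis in (1) and (3), upgraded to smoothness over the constant-rank open loci in (2) and (4) via Lemma~\ref{lem:smooth} and Lemma~\ref{SES}. Your identification of the extension freedom in (1) with $\ker F_i(\bar f)=\Exttilde_i(\bar f)$ by checking the unit squares of $\overrightarrow{G}_m$ based at $i$ is exactly the key computation, and your reading of (3) as choosing a nonzero functional on the cokernel, then a frame of its kernel, matches the paper.

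The only technical point worth flagging is in (2): you aim to apply Lemma~\ref{lem:smooth} directly to $\varpi_1$ by producing $\varpi_1^{-1}(V)\cong Z\times V$ with $Z=S(\nu_i-1,1)\times\prod_{j\ne i}\GL_{\nu_j}\times\Bbbk^r$. This asks for a global product decomposition over all of $S(\nu_i-1,1)$, but the identification of the fibre with $\Exttilde_i(\bar g)$ passes through a choice of complement $W$ to $\image\bar\phi_i$, which only works on an open piece of $S(\nu_i-1,1)$. The paper sidesteps this by factoring $\varpi_1=p_2\circ p_1$ through $S(\nu_i-1,1)\times\prod_{j\ne i}\GL_{\nu_j}\times\bar\Lambda^1$, showing $p_2$ is smooth by Lemma~\ref{lem:smooth}, and then showing $p_1$ is smooth by constructing the local trivialization $U'\times\Exttilde_i(\bar f)\cong p_1^{-1}(U')$ over an open $U'$ in that intermediate base (with the complement $X$ to $\Exttilde_i(\bar f)$ providing the algebraic family of isomorphisms $k_{\bar g}:\Exttilde_i(\bar f)\to\Exttilde_i(\bar g)$). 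Since you already anticipated that converting the constant-rank statements into honest local products is where the care is needed, this is just a matter of routing the argument through the intermediate factorization rather than trying to trivialize over $\bar\Lambda^1$ in one step.
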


\begin{proof}
    (1) Let $\bar f = (\bar f_{\tau})_{\tau \in E} \in E_C(\nu+\alpha_i)$. The fiber $\varpi^{-1}_1(\bar f) $  consists of $( (\bar\phi_j)_{j \in I}, (\phi'_j)_{j \in I}, (f_{\tau})_{\tau \in E})$ 
    such that the restriction of $(f_{\tau})_{\tau \in E}$ via $(\bar\phi_j)_{j \in I}$ equals $(\bar f_{\tau})_{\tau \in E}$. 
    It is obvious to have $(\bar\phi_i, \phi'_i)\in S(\nu_i-1,1)$.   
    For $j \in I$ with $j \neq i$, $\bar\phi_j$ is any automorphism of $\Bbbk^{\nu_j}$, hence we have $\prod_{j \neq i} \GL_{\nu_j}$ in the fiber. 
    Thus, we obtain a morphism 
    $p: \varpi^{-1}_1(\bar{f})\to S(\nu_i-1,1)\times \prod_{j\neq i}\GL_{\nu_j}$.
    Accordingly, it remains to determine $(f_{\tau})_{\tau \in E}$ for $(\bar f_{\tau})_{\tau \in E}$. 

    Let us fix 
    $((\bar\phi_i,\phi'_i),(\bar\phi_j)_{j\in I\setminus\{i\}})\in S(\nu_i-1,1)\times \prod_{j\neq i}\GL_{\nu_j}$ and fix a splitting $\Bbbk^{\nu_i} = \image\bar\phi_i \oplus W$, where $W$ is one dimensional subspace of $\Bbbk^{\nu_i}$. 
    Since the condition $\Bbbk^{\nu_i}=\image \bar\psi_i \oplus W$ for $\bar\psi_i:\Bbbk^{\nu_i-1}\rightarrow\Bbbk^{\nu_i}$ assigns an open property, 
    there is an open subset $U \subset S(\nu_{i}-1, 1) \times \prod_{j \in I \setminus \{i\}} \GL_{\nu_j}$ 
    such that 
    a direct sum decomposition 
    $\Bbbk^{\nu_i}=\image \bar\psi_i \oplus W$ 
    holds for any 
    $u=( (\bar\psi_i, \psi'_i), (\bar\psi_j)_{j \in I \setminus \{i\}}) \in U $. The projections $P_1 : \Bbbk^{\nu_i} \to \image \bar\psi_i$ and $P_2 :  \Bbbk^{\nu_i} \to W$ depend algebraically on $u \in U$.  
    Let us also fix an isomorphism $\gamma : W \cong \Bbbk$. 
    
    For $s(\tau) \neq i$, $f_{\tau}$ is determined by $\bar f_{\tau}$ and $(\bar\psi_j)_{j \in I}$
    as $f_{\tau}=\bar\psi_{t(\tau)}\bar f_{\tau}\bar\psi^{-1}_{s(\tau)}$. 
    For $\tau_\ell=(i,\ell)\in E$ with $\ell\in\out_1(i)$, 
    by using the isomorphism 
    $\Hom (\Bbbk^{\nu_i}, \Bbbk^{\nu_{\ell}}) \cong \Hom ( \image\bar\psi_i, \Bbbk^{\nu_{\ell}}) \oplus \Hom (W, \Bbbk^{\nu_{\ell}})$, 
    a map $f_{\tau_\ell}=(f_{1\tau_\ell}, f_{2\tau_\ell})$ is given by  
    \[
    f_{1\tau_\ell}(x)=\bar\psi_{\ell}\bar f_{\tau_\ell}\bar\psi^{-1}_{i}(P_1(x)), \quad
    f_{2\tau_\ell}(x)=\gamma(P_2(x))\bar\psi_\ell (v_\ell)
    \]
    for each  $u\in U$ and $(v_\ell)_{\ell\in\out_1(i)}\in \Exttilde_i(\bar f)$.     
    Therefore, we obtain a morphism $U\times \Exttilde_i(\bar f)\to p^{-1}(U)$. This morphism is an isomorphism because conversely any $f$ over $\bar f$ is decomposed as above. 
    Since each fiber is nonempty, $\varpi_1$ is surjective.  

    (2)  Let us define morphisms
    $
    p_1 : \varpi^{-1}_1(\bar\Lambda^{ 1}) \to 
    S(\nu_{i}-1, 1) \times \prod_{j \in I \setminus \{i\}} \GL_{\nu_j} \times \bar\Lambda^{ 1}
    $ and 
    $p_2:S(\nu_{i}-1, 1) \times \prod_{j \in I \setminus \{i\}} \GL_{\nu_j} \times \bar\Lambda^{ 1}\to\bar\Lambda^{ 1}$
    by sending 
    $( (\bar\phi_j)_{j \in I}, (\phi'_j)_{j \in I}, (f_{\tau})_{\tau \in E})$ 
    to
    $( (\bar\phi_i, \phi'_i), (\bar\phi_j)_{j \in I \setminus \{i\}}, (\bar f_{\tau})_{\tau \in E})$ and the projection, respectively. 
    Then, we have a decomposition $\varpi_1|_{\varpi_1^{-1}(\bar\Lambda^{ 1})}=p_2 p_1$. 
    From Lemma \ref{lem:smooth}, we see that $p_2$ is smooth, so we show that $p_1$ is smooth.

    Let us fix $( (\bar\phi_i, \phi'_i), (\bar\phi_j)_{j \in I \setminus \{i\}}, (\bar f_{\tau})_{\tau \in E}) \in 
    S(\nu_{i}-1, 1) \times \prod_{j \in I \setminus \{i\}} \GL_{\nu_j} \times \bar\Lambda^{ 1}$, and a complementary subspace $W\subset \Bbbk^{\nu_i}$ to $\image \bar\phi_i$. 
%    Let us fix $W \subset \Bbbk^{\nu_i}$ to be a complementary subspace to $\image \phi_i$. 
%    Since the condition $\image \psi_i \oplus W = \Bbbk^{\nu_i}$ for $\psi_i:\Bbbk^{\nu_i-1}\rightarrow\Bbbk^{\nu_i}$ assigns an open property, 
    From the same argument in (1), 
    there is an open subset $U \subset S(\nu_{i}-1, 1) \times \prod_{j \in I \setminus \{i\}} \GL_{\nu_j} \times \bar\Lambda^{ 1}$ 
    such that  
    a direct sum decomposition 
    $\Bbbk^{\nu_i}=\image \bar\psi_i \oplus W$ 
    holds for any 
    $u=( (\bar\psi_i, \psi'_i), (\bar\psi_j)_{j \in I \setminus \{i\}}, (\bar g_\tau)_{\tau \in E}) \in U $. The projections $P_1 : \Bbbk^{\nu_i} \to \image \bar\psi_i$ and $P_2 :  \Bbbk^{\nu_i} \to W$ depend algebraically on $u \in U$.  
    We also fix an isomorphism $\gamma : W \cong \Bbbk$. 
    
    In the following, we construct an open set $U'\subset U$ and isomorphism $q: U'\times \Exttilde_i(\bar f)\rightarrow p_1^{-1}(U')$. 
    We note from (1) that the fiber 
    $p_1^{-1}  ( (\bar\psi_i, \psi'_i), (\bar\psi_j)_{j \in I \setminus \{i\}}, (\bar g_{\tau})_{\tau \in E})$
    is isomorphic to $\Exttilde_i(\bar g)$. 
    Let us fix a complementary subspace $X$ so that $\oplus_{\ell\in\out_1(i)}\Bbbk^{\nu_\ell}=\Exttilde_i(\bar f)\oplus X$. 
    We define an open subset $V\subset \bar\Lambda^{ 1}$ consisting of $\bar g= (\bar g_{\tau})_{\tau \in E}$ with $\oplus_{\ell\in\out_1(i)}\Bbbk^{\nu_\ell}=\Exttilde_i(\bar g) \oplus X$, and  
    set $U' = U \cap (S(\nu_{i}-1, 1) \times \prod_{j \in I \setminus \{i\}} \GL_{\nu_j} \times V)$. 
    We note that, for each $\bar g \in V$, we have an isomorphism $k_{\bar g} : \Exttilde_i(\bar f) \to \oplus_{\ell\in\out_1(i)}\Bbbk^{\nu_\ell} \to \Exttilde_i(\bar g)$ which depends algebraically on $\bar g$. 
    
    Then, the morphism $q :  U' \times \Exttilde_i(\bar f) \to p^{-1}_1(U')$ is defined similarly to the proof of (1) by sending 
    $( (\bar\psi_i, \psi'_i), (\bar\psi_j)_{j \in I \setminus \{i\}}, (\bar g_{\tau})_{\tau \in E}, v)$
    to 
    $( (\bar\psi_j)_{j\in I}, (\psi'_j)_{j \in I}, (g_{\tau})_{\tau \in E})$. 
    Namely, for $s(\tau)\neq i$, 
    $g_{\tau}: \Bbbk^{\nu_{s(\tau)}} \to \Bbbk^{\nu_{t(\tau)}}$ is given by
    $g_{\tau}(x) =\bar\psi_{t(\tau)}\bar g_\tau\bar\psi^{-1}_{s(\tau)}(x)$. 
    For $\tau_\ell=(i,\ell)\in E$ with $\ell\in\out_1(i)$,
    the map $g_{\tau_\ell}=(g_{1\tau_\ell}, g_{2\tau_\ell})$  is given by  
    \[
    g_{1\tau_\ell}(x)=\bar\psi_{\ell}\bar g_{\tau_\ell}\bar\psi^{-1}_{i}(P_1(x)), \quad
    g_{2\tau_\ell}(x)=\gamma(P_2(x))\bar{\psi}_\ell k_{\bar g}(v),
    \]
    where $v\in \Exttilde_i(\bar f)$. 
    From the construction, the morphism $q$ is an isomorphism.  
    Therefore, Lemma \ref{lem:smooth} shows the smoothness of $p_1$.    
\begin{comment}    
    \[g_{\tau}(x) =
    \begin{cases}
       \psi_{t(\tau)}\bar g_\tau\psi^{-1}_{s(\tau)}(x)  & \text{if}~ s(\tau)\neq i, \\
       \psi_{t(\tau)}\bar g_\tau\psi^{-1}_{s(\tau)}(P_1(x)) +\gamma(P_2(x))k_{\bar g}(v)  & \text{if}~ s(\tau)=i~\text{and}~|\{\tau\in E~|~s(\tau)=i\}|=1.
    \end{cases}
    \]
\end{comment}

    (3) Let $f = ( f_{\tau})_{\tau \in E} \in E_C(\nu)$. The fiber $\varpi^{-1}_2(f)$ consists of $( (\bar\phi_j)_{j \in I}, (\phi'_j)_{j \in I}, f)$ 
    satisfying the conditions in Definition \ref{defEprime}.
    Then $\bar\phi_i$ is injective and satisfies the condition $\image \bar\phi_i \supset \sum_{t(\tau) = i} \image f_{\tau} $, and $\phi'_i$ is surjective. 
    Such $(\bar\phi_j)_{j \in I}$ and $(\phi'_j)_{j \in I}$ exist if and only if $\nu_i-1 (=\dim \image \bar\phi_i ) \geq \dim \sum_{t(\tau) = i} \image f_{\tau}$. 
    This is equivalent to  $\nu_i - \dim \sum_{t(\tau) = i} \image f_{\tau} = \varepsilon_i (f) >0 $. 

    %The map $\phi_i$ as above corresponds bijectively to the pair $(V, \psi_i)$, where $V$ is a hyperplane in $\Bbbk^{\nu_i}$ containing $\sum_{t(\tau) = i} \image f_{\tau}$ and $\psi_i$ is an isomorphism $\Bbbk^{\nu_i -1} \to V$. 
    %Hyperplanes in $\Bbbk^{\nu_i}$ containing $\sum_{t(\tau) = i} \image f_{\tau}$ may be identified with hyperplanes in $\Bbbk^{\nu_i} / \sum_{t(\tau) = i} \image f_{\tau}$
    %Since $\sum_{t(\tau) = i} \image f_{\tau}$ have codimension $\varepsilon_i(\Lambda) $ in $\Bbbk^{\nu_i}$, the space of such hyperplanes is $\mathbb P^*(\Bbbk^{\varepsilon_i(f)})$. 

    %The map $\phi'_i$ is any isomorphism $\Bbbk^{\nu_i} / V \to \Bbbk$. Thus the choice of $\phi'_i$ is $GL_1(\Bbbk)$. 

    %The map $\phi_j$ for $j \neq i$ is any automorphism of $\Bbbk^{\nu_j}$, and thus the choice of $\phi_j$ is $GL_{\nu_j}$. 

    The map $\phi'_i$ corresponds bijectively to the non-zero linear map 
    $\tilde \phi'_i : \Bbbk^{\nu_i}/\sum_{t(\tau) = i} \image f_{\tau} \cong \Bbbk^{\varepsilon_i(f)} \to \Bbbk$, 
    thus it is parameterized by $(\Bbbk^{\varepsilon_i(f)})^* \setminus \{ 0 \}$. The map $\phi_j$ for $j \neq i$ is any automorphism of $\Bbbk^{\nu_j}$. Thus we have a surjection $\varpi^{-1}_2(f) \to \prod_{j \neq i} \GL_{\nu_j}\times \left((\Bbbk^{\varepsilon_i(f)})^* \setminus \{ 0 \} \right) =:B$. 
    It remains to prove that this is a principal $\GL_{\nu_{i-1}}$ bundle. 
    
    Let us consider the tautological morphism $\Phi'_i: \Bbbk^{\nu_i}\times B\to\Bbbk\times B$ associated with $\phi'_i$ on trivial vector bundles over $B$.
    Since $\Phi'_i$ is surjective,  $\ker \Phi'_i$ exists as a vector bundle (cf. Proposition 1.7.2 in \cite{Potier}). 
    Furthermore, since the dimensions of fibers of the vector bundles $\Bbbk^{\nu_i-1}\times B$ and $\ker\Phi'_i$ are the same, we have a principal $\GL_{\nu_i-1}$ bundle $\text{Isom}(\Bbbk^{\nu_i-1}\times B,\ker\Phi'_i)\subset \Hom(\Bbbk^{\nu_i-1}\times B,\ker\Phi'_i)$. 
    Since the map $\bar\phi_i$ is given by an isomorphism $ \Bbbk^{\nu_i-1} \cong \ker \phi'_i$, we see that the fiber $\varpi_2^{-1}(f)$ is isomorphic to $\text{Isom}(\Bbbk^{\nu_i-1}\times B,\ker\Phi'_i)$. 
    
%    (Thus $\phi_i$ is parameterized by $GL_{\nu_i-1}$.     The map $\phi_j$ for $j \in I \setminus \{i\}$ is an isomorphism $ \Bbbk^{\nu_j} \cong \Bbbk^{\nu_j}$     and thus parameterized by $GL_{\nu_j}$. The map $\phi'_j$ for $j \in I \setminus \{i\}$ is obviously zero.     By combining together, $\varpi^{-1}_2(f)$ is  isomorphic  to $GL_{\nu_i-1} \times  \prod_{j \neq i} GL_{\nu_j}\times \left((\Bbbk^{\varepsilon_i(f)})^* \setminus \{ 0 \} \right)$.)

    (4) Let $\mathcal{V}_j := \Bbbk^{\nu_j} \times \Lambda^{ 2}$ be a trivial vector bundle over $\Lambda^{ 2}$ and 
    $F_{\tau} : \mathcal{V}_{s(\tau)} \to \mathcal{V}_{t(\tau)} $ 
     be the tautological morphism. 
    Since $(F_{\tau})_{t(\tau)=i}: \oplus_{t(\tau)=i} \mathcal{V}_{s(\tau)} \to \mathcal{V}_i$ has the constant rank on $\Lambda^{ 2}$, 
    $\mathcal{C}  := \coker (F_{\tau})_{t(\tau)=i}$ is a vector bundle on $\Lambda^{ 2}$. 
    Let $(\mathcal{C}^{*})^\circ$ be the complement of the 0-section in the dual $\mathcal{C}^*$ of $\mathcal{C}$, 
    $\mathcal{W} := \Bbbk^{\nu_i} \times \prod_{j \in I \setminus \{i\}} GL_{\nu_j} \times (\mathcal{C}^*)^{\circ}$
    be the trivial bundle, 
    and  
    $\Phi'_i: \mathcal{W} \to 
    \Bbbk \times \prod_{j \in I \setminus \{i\}} \GL_{\nu_j} \times (\mathcal{C}^*)^{\circ}$
    be the tautological morphism associated with $\phi'_i\in (\mathcal{C}^*)^\circ$. 
    Then, $\Phi'_i$ becomes surjective, and hence we have a vector bundle $\mathcal{K}:= \ker \Phi'_i$ over $\prod_{j \in I \setminus \{i\}} \GL_{\nu_j} \times (\mathcal{C}^*)^{\circ}$. 

    Let $\overline{\mathcal{W}} := \Bbbk^{\nu_i-1} \times \prod_{j \in I \setminus \{i\}} \GL_{\nu_j} \times (\mathcal{C}^*)^{\circ}$. 
    From the similar argument to (3), we have a fiber bundle $\textrm{Isom}(\overline{\mathcal{W}},\mathcal{K})\subset \Hom(\overline{\mathcal{W}},\mathcal{K})$. 
    Let us define a morphism
     $\alpha: \varpi^{-1}_2(\Lambda^{ 2}) \to \textrm{Isom}(\overline{ \mathcal{W}}, \mathcal{K} ) $
    by sending $( (\bar\phi_j)_{j \in I}, (\phi'_j)_{j \in I}, (f_{\tau})_{\tau \in E}) \mapsto ( \bar\phi_i, (\bar\phi_j)_{j \in I \setminus \{i\}},  (\phi'_i, (f_{\tau})_{\tau \in E}))$. 
    Then as in (3), we see that $\alpha$ is an isomorphism. 

    The composition of $\alpha$ and the projection to $\Lambda^{ 2}$ is the same as the restriction of $\varpi_2$. Since $\textrm{Isom}(\overline{ \mathcal{W}}, \mathcal{K} )$ is a principal $\GL_{\nu_{i-1}}$ bundle over $\prod_{j \in I \setminus \{i\}} \GL_{\nu_j} \times (\mathcal{C}^*)^{\circ}$, $\varpi_2$ is smooth over $\Lambda^{ 2}$ from Lemma \ref{lem:smooth}. 
\end{proof}

\begin{rem}    
    The vector space $\Exttilde_i(f)$ is different from that appearing in the paper \cite{KS}. The distinction arises because the preprojective relation (\ref{eq:preprojective}) is generated by paths whose source and target vertices are identical, whereas the  commutative relation is generated by paths with distinct source and target vertices.
\end{rem}

\begin{lem}\label{lem:pullback_irr}
Let $f: X\rightarrow Y$ be a smooth surjective morphism with connected fibers on algebraic varieties and $Y$ be irreducible. Then, $X$ is also irreducible. 
\end{lem}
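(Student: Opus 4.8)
The plan is to argue topologically, reducing the irreducibility of $X$ to the impossibility of separating $X$ by two disjoint nonempty open sets. From the smoothness of $f$ I would extract two properties. First, a smooth morphism is flat and of finite type, hence \emph{open} (cf. \cite{Hartshorne}); this lets me transport open sets from $X$ to $Y$. Second, every fiber $f^{-1}(y)$ is not merely connected but in fact \emph{irreducible}: as a smooth variety it is regular, hence normal, so its local rings are domains and it is locally irreducible; consequently its irreducible components are open and pairwise disjoint, so they coincide with its connected components, and connectedness of the fiber forces it to be irreducible. Thus smoothness upgrades the hypothesis of connected fibers to irreducible fibers, which is what the argument will actually consume.

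Granting these two facts, the core argument runs as follows. First, $X$ is nonempty, since $f$ is surjective and $Y$, being irreducible, is nonempty. Suppose for contradiction that $X$ is reducible, so $X = Z_1 \cup Z_2$ for proper closed subsets $Z_1, Z_2$ with neither containing the other; then $U_1 := X \setminus Z_1$ and $U_2 := X \setminus Z_2$ are nonempty open subsets of $X$ satisfying $U_1 \cap U_2 = \emptyset$. Because $f$ is open, $f(U_1)$ and $f(U_2)$ are nonempty open subsets of the irreducible space $Y$, and hence they intersect; I would pick a point $y \in f(U_1) \cap f(U_2)$. Then $f^{-1}(y) \cap U_1$ and $f^{-1}(y) \cap U_2$ are both nonempty and open in the subspace topology on the fiber $f^{-1}(y)$, yet they are disjoint. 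This contradicts the irreducibility of $f^{-1}(y)$, in which any two nonempty open subsets must meet. Therefore no such decomposition of $X$ exists and $X$ is irreducible.

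The step I expect to be the main obstacle — or rather the one demanding genuine care instead of formal manipulation — is the passage from \emph{connected} to \emph{irreducible} fibers. The separation argument above breaks down for merely connected fibers, since the disjoint open sets $f^{-1}(y) \cap U_1$ and $f^{-1}(y) \cap U_2$ need not cover $f^{-1}(y)$, so connectedness alone produces no contradiction. It is precisely the smoothness hypothesis, via normality of the fibers, that promotes connectedness to irreducibility and makes the whole argument go through; I would state this implication cleanly before invoking it, and likewise record the openness of smooth morphisms explicitly, as these are the two places where the hypotheses are genuinely used.
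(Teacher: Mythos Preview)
Your proof is correct and follows essentially the same approach as the paper's. Both arguments produce two disjoint nonempty open subsets of $X$ from a putative reducibility, push them forward via the openness of $f$ to intersecting opens in the irreducible $Y$, and then derive a contradiction from the irreducibility of the fiber over a common point; you are simply more explicit than the paper in justifying the two ingredients (smooth $\Rightarrow$ open, and smooth connected fiber $\Rightarrow$ irreducible fiber), which the paper invokes in a single phrase each.
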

\begin{proof}
Let us assume the irreducible decomposition $X=\bigcup_{k\in K}X_k$ with $|K|> 1$ and set disjoint open sets $U_1=X_1\setminus \bigcup_{k\in K, k\neq 1}X_k$ and $U_2=X_2\setminus \bigcup_{k\in K, k\neq 2}X_k$. Since $f$ is smooth, $f(U_1)$ and $f(U_2)$ are open in $Y$. Then, the irreducibility of $Y$ implies $f(U_1)\cap f(U_2)\neq \emptyset$, and we take a point $y$ from this intersection. From the assumption, $f^{-1}(y)$ is smooth and connected, hence irreducible. Therefore the nonempty open sets $f^{-1}(y)\cap U_1$ and $f^{-1}(y)\cap U_2$ have an intersection $f^{-1}(y)\cap U_1\cap U_2\neq \emptyset$. This contradicts to the disjointness of $U_1$ and $U_2$. 
\end{proof}

By Proposition \ref{prop:varpi} (2) and (4) and Lemma \ref{lem:pullback_irr}, the algebraic sets $\overline{\varpi_2 \varpi^{-1}_1(\bar\Lambda^{ 1})}$ and $\overline{\varpi_1 \varpi^{-1}_2(\Lambda^{ 2})}$ are irreducible.
Using the notations in Proposition \ref{prop:varpi}, we now define the Kashiwara operators in the following way. 

\begin{defn}\label{defn:ef}
(1)
For $\bar \Lambda \in B_C(\nu +\alpha_i)$, we define $\tilde f_i \bar \Lambda \in B_C(\nu) \sqcup \{ 0 \}$ by 

\[
\tilde f_i \bar \Lambda
:=
\left\{
\begin{matrix}
\overline{\varpi_2 \varpi^{-1}_1(\bar \Lambda^{ 1})} 
&
\text{if $\overline{\varpi_2 \varpi^{-1}_1(\bar \Lambda^{ 1})} \in B_C(\nu)$},
\\
0
&
\text{otherwise}.
\end{matrix}
\right.
\] 

(2)
For $\Lambda \in B_C(\nu)$, we define $\tilde e_i \Lambda \in B_C(\nu + \alpha_i) \sqcup \{ 0 \}$ by 
\[
\tilde e_i \Lambda
:=
\left\{
\begin{matrix}
\overline{\varpi_1 \varpi^{-1}_2(\Lambda^{ 2})} 
&
\text{if $\varepsilon_i(\Lambda)>0$ and $\overline{\varpi_1 \varpi^{-1}_2(\Lambda^{ 2})} \in B_C(\nu+\alpha_i)$,} 
\\
0
&
\text{otherwise}.
\end{matrix}
\right.
\]    
\end{defn}

Note that even though $\overline{\varpi_2 \varpi^{-1}_1(\bar\Lambda^{ 1})}$ and $\overline{\varpi_1 \varpi^{-1}_2(\Lambda^{ 2})}$ are irreducible, 
they are not necessarily irreducible components of $E_C(\nu)$ and $E_C(\nu+\alpha_i)$, respectively. 
We find such examples in the $2 \times 2$ case later (see the proof of Theorem \ref{thm:2x2ef}).

\begin{lem}\label{prop:ef} \hfill
    \begin{enumerate}
        \item If $ \tilde e_i \Lambda = \bar \Lambda \in B_C(\nu + \alpha_i) $, we have $\Lambda = \tilde f_i \bar \Lambda$.

        \item If $ \tilde f_i \bar \Lambda= \Lambda \in B_C(\nu)$, we have  $ \bar \Lambda = \tilde e_i \Lambda $. 
    \end{enumerate}
\end{lem}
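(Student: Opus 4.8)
The plan is to reduce both implications to the \emph{maximality} of irreducible components, thereby sidestepping any direct comparison of the two preimages inside $E'_C(\nu+\alpha_i,-\alpha_i)$. Throughout, write $W_1:=\varpi_1^{-1}(\bar\Lambda^{1})$ and $W_2:=\varpi_2^{-1}(\Lambda^{2})$. Since $\bar\Lambda$ is a component, Proposition \ref{prop:varpi}(2) together with Lemma \ref{lem:pullback_irr} shows $W_1$ is irreducible, being the total space of a smooth surjective morphism with connected fibers over the irreducible base $\bar\Lambda^{1}$; likewise, whenever $\varepsilon_i(\Lambda)>0$, Proposition \ref{prop:varpi}(4) and Lemma \ref{lem:pullback_irr} make $W_2$ irreducible over $\Lambda^{2}$. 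The key point is that I will never try to identify $\overline{W_1}$ with $\overline{W_2}$ (which would force a dimension equality), but only establish a single inclusion of images in each direction and then upgrade it to equality formally.

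For (1), I would start from the hypothesis $\tilde e_i\Lambda=\bar\Lambda$, which unwinds (Definition \ref{defn:ef}) to $\varepsilon_i(\Lambda)>0$ and $\bar\Lambda=\overline{\varpi_1(W_2)}$; in particular $\varpi_1(W_2)$ is dense in $\bar\Lambda$ and contained in it, so $\varpi_1$ restricts to $W_2\to\bar\Lambda$. As $\bar\Lambda^{1}$ is a nonempty open subset of the irreducible set $\bar\Lambda$, its preimage $(\varpi_1|_{W_2})^{-1}(\bar\Lambda^{1})=W_1\cap W_2$ is nonempty open, hence dense, in $W_2$. Pushing forward along $\varpi_2$ and using $\varpi_2(W_2)=\Lambda^{2}$ (surjectivity in Proposition \ref{prop:varpi}(4)), I obtain
\[
\Lambda=\overline{\Lambda^{2}}=\overline{\varpi_2(W_2)}\subseteq\overline{\varpi_2(W_1\cap W_2)}\subseteq\overline{\varpi_2(W_1)}.
\]
Now $\overline{\varpi_2(W_1)}$ is an irreducible closed subset of $E_C(\nu)$ that contains the component $\Lambda$; by maximality of irreducible components it equals $\Lambda$. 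Since $\overline{\varpi_2(W_1)}=\overline{\varpi_2\varpi_1^{-1}(\bar\Lambda^{1})}$ is then an element of $B_C(\nu)$, Definition \ref{defn:ef}(1) yields $\tilde f_i\bar\Lambda=\Lambda$.

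The proof of (2) is the mirror image. From $\tilde f_i\bar\Lambda=\Lambda$ I first record $\varepsilon_i(\Lambda)>0$: indeed $\Lambda=\overline{\varpi_2(W_1)}$ lies in the image of $\varpi_2$, which by Proposition \ref{prop:varpi}(3) is the closed locus $\{\,\varepsilon_i>0\,\}$, so every point of $\Lambda$ has $\varepsilon_i>0$ and in particular the generic point computing $\varepsilon_i(\Lambda)$. With $\varepsilon_i(\Lambda)>0$ secured, $W_2$ is irreducible, and exactly as above with the roles of $\varpi_1,\varpi_2$ and of $\bar\Lambda^{1},\Lambda^{2}$ exchanged I would show $W_1\cap W_2$ is dense in $W_1$ (using that $\varpi_2(W_1)$ is dense in $\Lambda$ and meets $\Lambda^{2}$), then push forward along $\varpi_1$ to get $\bar\Lambda=\overline{\varpi_1(W_1)}\subseteq\overline{\varpi_1(W_2)}$, and conclude $\overline{\varpi_1(W_2)}=\bar\Lambda$ by maximality of $\bar\Lambda$ in $E_C(\nu+\alpha_i)$. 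This gives $\tilde e_i\Lambda=\bar\Lambda$.

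The main obstacle is conceptual rather than computational. One is tempted to prove the stronger $\overline{W_1}=\overline{W_2}$ and transport it by $\varpi_1,\varpi_2$, but that equality would force $\dim W_1=\dim W_2$, which fails in general because the fibers of $\varpi_1$ are controlled by $\dim\Exttilde_i$ while those of $\varpi_2$ are controlled by $\varepsilon_i$, and these are not compatibly balanced (this is precisely the source of the non-equidimensional components noted in the introduction). The device circumventing this is to prove only one inclusion of images in each direction and invoke maximality of irreducible components. The two points demanding care are therefore the density of $W_1\cap W_2$ in the correct factor — which depends on selecting the appropriate generic stratum, $\bar\Lambda^{1}$ or $\Lambda^{2}$ — and the bookkeeping ensuring $\varepsilon_i(\Lambda)>0$ so that Proposition \ref{prop:varpi}(3),(4) are applicable.
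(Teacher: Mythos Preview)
Your proof is correct and follows essentially the same strategy as the paper's: show that $W_1\cap W_2$ is a nonempty open (hence dense) subset of $W_2$ (resp.\ $W_1$), push forward to obtain $\Lambda\subseteq\overline{\varpi_2(W_1)}$ (resp.\ $\bar\Lambda\subseteq\overline{\varpi_1(W_2)}$), and conclude equality by maximality of irreducible components. The only cosmetic difference is that the paper transports openness to the target via the smoothness of $\varpi_2$, whereas you use the elementary fact that continuous images of dense subsets are dense; both yield the same inclusion.
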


\begin{proof}
    (1) 
    It follows from the assumption $\bar \Lambda = \overline{\varpi_1 \varpi^{-1}_2(\Lambda^{ 2})}$ that we have 
    $\varpi_2^{-1} \Lambda^{ 2} \subset \varpi_1^{-1} \bar \Lambda$. 
    Since $\bar \Lambda^{ 1}$ is open in $\bar \Lambda$, 
    $\varpi_2^{-1} \Lambda^{ 2} \cap \varpi_1^{-1} \bar \Lambda^{ 1}$ 
    is also open in 
    $\varpi_2^{-1} \Lambda^{ 2} \cap \varpi_1^{-1} \bar  \Lambda = \varpi_2^{-1} \Lambda^{ 2} $.   
    If $\varpi_1(\varpi_2^{-1} \Lambda^{ 2} \cap \varpi_1^{-1} \bar \Lambda^{ 1}) = \varpi_1(\varpi_2^{-1}\Lambda^{ 2})\cap \bar\Lambda^{ 1}=\emptyset$, then $\varpi_1(\varpi_2^{-1}\Lambda^{ 2})$ becomes a subset of a proper closed set $\bar\Lambda\setminus\bar\Lambda^{ 1}$ of $\bar\Lambda$, contradicting to $\bar\Lambda=\overline{\varpi_1 \varpi^{-1}_2(\Lambda^{ 2})}$. This implies that $\varpi_2^{-1} \Lambda^{ 2} \cap \varpi_1^{-1} \bar \Lambda^{ 1}$ is a non-empty open subset of $\varpi_2^{-1} \Lambda^{ 2}$.
      
    From Proposition \ref{prop:varpi} (4), $\varpi_2|_{\varpi_2^{-1} \Lambda^{2}}: \varpi_2^{-1} \Lambda^{2} \to \Lambda^{ 2}$ is smooth and hence an open morphism. 
    Then, from the irreducibility of $\Lambda$, we have $\Lambda=\overline{\varpi_2(\varpi_2^{-1}\Lambda^{ 2}\cap \varpi_1^{-1}\bar\Lambda^{ 1})}\subset \overline{\varpi_2\varpi_1^{-1}\bar\Lambda^{ 1}}$. Since the right subset is irreducible and closed, we have $\Lambda=\overline{\varpi_2\varpi_1^{-1}\bar\Lambda^{ 1}}$.

    (2) $\Lambda =\tilde f_i \bar \Lambda$ implies $\varepsilon_i (\Lambda) > 0$. By Proposition \ref{prop:varpi} (3), we see that $\varpi_2|_{\varpi_2^{-1} \Lambda} : \varpi^{-1}_2 \Lambda \to \Lambda$ is surjective. 
    With this, we can prove (2) in the same way as (1). 
\end{proof}

\begin{thm}\label{thm:BC}
    The tuple $(B_C, \weight, (\varepsilon_i)_{i \in I}, (\varphi_i)_{i \in I}, (\tilde e_i)_{i \in I}, (\tilde f_i)_{i \in I})$ 
    is a crystal with respect to the root system associated with the quiver $\overrightarrow{G}_{m}$. 
\end{thm}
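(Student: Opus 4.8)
The plan is to verify the five axioms of Definition \ref{defn:crystal} for the datum $(B_C, \weight, (\varepsilon_i)_{i\in I}, (\varphi_i)_{i\in I}, (\tilde e_i)_{i\in I}, (\tilde f_i)_{i\in I})$. Axiom (1) holds by the very definition of $\varphi_i$ in Definition \ref{defn:epsilonphi}(4). Axiom (5) is vacuous: since $\varepsilon_i(\Lambda)$ is the dimension of a cokernel it lies in $\N$, so $\varphi_i(\Lambda)=\varepsilon_i(\Lambda)+\langle h_i,\weight(\Lambda)\rangle\in\Z$ is never $-\infty$. Axiom (4) is precisely Lemma \ref{prop:ef}: setting $b'=\Lambda\in B_C(\nu)$ and $b=\bar\Lambda\in B_C(\nu+\alpha_i)$, part (1) of that lemma gives $b=\tilde e_i b'\Rightarrow \tilde f_i b=b'$ and part (2) gives the converse.

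The substance therefore lies in axioms (2) and (3), and by axiom (4) it suffices to treat $\tilde f_i$. Suppose $\tilde f_i\bar\Lambda=\Lambda\in B_C(\nu)$ for $\bar\Lambda\in B_C(\nu+\alpha_i)$. The weight relation $\weight(\Lambda)=\weight(\bar\Lambda)-\alpha_i$ is immediate from Definition \ref{defn:epsilonphi}(1), since $\tilde f_i$ maps $B_C(\nu+\alpha_i)$ into $B_C(\nu)$. The crux is the identity $\varepsilon_i(\Lambda)=\varepsilon_i(\bar\Lambda)+1$; granting it, the relation $\varphi_i(\Lambda)=\varphi_i(\bar\Lambda)-1$ follows from axiom (1) and $\langle h_i,\alpha_i\rangle=A_{ii}=2$. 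The relations of axiom (2) then follow by axiom (4): if $\tilde e_i\Lambda=\bar\Lambda$ then $\tilde f_i\bar\Lambda=\Lambda$, and reading the $\tilde f_i$ relations backwards gives $\weight(\bar\Lambda)=\weight(\Lambda)+\alpha_i$, $\varepsilon_i(\bar\Lambda)=\varepsilon_i(\Lambda)-1$, and $\varphi_i(\bar\Lambda)=\varphi_i(\Lambda)+1$.

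To prove $\varepsilon_i(\Lambda)=\varepsilon_i(\bar\Lambda)+1$ I would first establish a pointwise identity along the correspondence $E_C(\nu+\alpha_i)\xleftarrow{\varpi_1}E'_C(\nu+\alpha_i,-\alpha_i)\xrightarrow{\varpi_2}E_C(\nu)$. For any $x\in E'_C(\nu+\alpha_i,-\alpha_i)$ the quotient representation has dimension vector concentrated at $i$ with value $1$, hence equals the simple module $S_i$; from the short exact sequence $0\to\bar M\to M\to S_i\to 0$ with $\bar M=\varpi_1(x)$ and $M=\varpi_2(x)$ one checks $M_j=\bar M_j$ for $j\neq i$ and $\dim M_i=\dim\bar M_i+1$. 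Since $Q$ has no loops, every arrow $\tau$ with $t(\tau)=i$ has $s(\tau)\neq i$, so all structure maps of $S_i$ vanish and each $\image f_\tau$ lies in $\bar M_i=\ker(M_i\to (S_i)_i)$; consequently $\sum_{t(\tau)=i}\image f_\tau$ is the image of $\sum_{t(\tau)=i}\image\bar f_\tau$ and has the same dimension. Therefore $\varepsilon_i(M)=\dim M_i-\dim\sum_{t(\tau)=i}\image f_\tau=\varepsilon_i(\bar M)+1$ for \emph{every} $x$.

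It then remains to pass from this pointwise identity to the generic minimal values defining $\varepsilon_i$ on irreducible components. Let $\bar\Lambda^{\varepsilon}\subseteq\bar\Lambda$ be the open dense locus where $\varepsilon_i$ attains its minimum $\varepsilon_i(\bar\Lambda)$; its intersection with the open dense set $\bar\Lambda^{1}$ of Proposition \ref{prop:varpi}(2) is again open dense. By Proposition \ref{prop:varpi}(2) and Lemma \ref{lem:pullback_irr}, $\varpi_1^{-1}(\bar\Lambda^{1})$ is irreducible and $\varpi_1$ restricts to a smooth surjection onto $\bar\Lambda^{1}$, so $\varpi_1^{-1}(\bar\Lambda^{1}\cap\bar\Lambda^{\varepsilon})$ is open dense in $\varpi_1^{-1}(\bar\Lambda^{1})$; pushing forward along the continuous $\varpi_2$ shows $\varpi_2(\varpi_1^{-1}(\bar\Lambda^{1}\cap\bar\Lambda^{\varepsilon}))$ is dense in $\Lambda=\overline{\varpi_2\varpi_1^{-1}(\bar\Lambda^{1})}$, and on it $\varepsilon_i$ is constantly $\varepsilon_i(\bar\Lambda)+1$ by the pointwise identity. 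As the minimal-$\varepsilon_i$ locus of $\Lambda$ is open dense it must meet this dense set, forcing $\varepsilon_i(\Lambda)=\varepsilon_i(\bar\Lambda)+1$. I expect this transfer step to be the main obstacle: the open set $\bar\Lambda^{1}$ is cut out by minimality of $\dim\Exttilde_i$ rather than of $\varepsilon_i$, so one must carefully track density through both $\varpi_1$ and $\varpi_2$ (using irreducibility from Lemma \ref{lem:pullback_irr} and smoothness from Proposition \ref{prop:varpi}) to ensure the generic value of $\varepsilon_i$ on $\Lambda$ is genuinely $\varepsilon_i(\bar\Lambda)+1$ and is not lowered on a smaller stratum.
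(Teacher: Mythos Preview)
Your proof is correct and follows essentially the same approach as the paper's: the pointwise identity $\varepsilon_i(M)=\varepsilon_i(\bar M)+1$ along the correspondence (equivalently, that the ranks of $(f_\tau)_{t(\tau)=i}$ and $(\bar f_\tau)_{t(\tau)=i}$ agree) is the key observation in both, and axioms (1), (4), (5) are handled identically. The only cosmetic differences are that the paper verifies axiom (2) for $\tilde e_i$ directly rather than deducing it from $\tilde f_i$ via axiom (4), and that you spell out the density argument transferring the pointwise identity to the generic minima, which the paper leaves implicit.
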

\begin{proof}
    The property (1) in Definition \ref{defn:crystal} is satisfied by the definition of the function $\varphi_i$. 

    Assume $\Lambda \in B_C(\nu)$ and $\tilde e_i \Lambda \neq 0$. Then  $\weight(\tilde e_i \Lambda) =\nu+\alpha_i$ follows from Definition \ref{defn:ef}. From the proof of Proposition \ref{prop:varpi} (1), we see that for a representation $(f_{\tau})_{\tau \in E} \in \Lambda$ and a subrepresentation $(\bar f_{\tau})_{\tau \in E} \in \tilde e_i \Lambda$ the ranks of 
    $(f_{\tau})_{\tau \in E, t(\tau) =i}$ 
    and  
    $(\bar f_{\tau})_{\tau \in E, t(\tau) =i}$ coincide. 
    Then, by setting $r$ to be its rank, we have $\varepsilon_i((f_{\tau})_{\tau \in E})= \nu_{i}-r$ and $\varepsilon_i((\bar f_{\tau})_{\tau \in E})= \nu_{i}-1-r$, which implies $\varepsilon_i(\Lambda) - 1 = \varepsilon_i(\tilde e_i \Lambda)$. The equality $\varphi_i(\tilde e_i \Lambda) = \varphi_i(\Lambda) + 1 $ follows from the definition of $\varphi_i$ and $\langle h_i , \alpha_i \rangle = 2$. These prove the property (2). 

    The property (3) is proved similarly. 
    The property (4) is proved in Lemma \ref{prop:ef}. 
    The property (5) is satisfied because no elements in $B_C$ take the value $-\infty$ by $ \varphi_i$. 
\end{proof}

In the remaining of this section,  we construct the $\ast$-Kashiwara operators, which also give a crystal structure on $B_C$. 
Since the proofs of the statements below are similar to the originals, we omit them. 
We note that the $\ast$-Kashiwara operators may be defined taking dual of representations and using $\overrightarrow{G}^{}_{m} \cong \overrightarrow{G}^{op}_{m}$. This construction is explained at the end of this section.

For a fixed $i \in I$ and $\nu +\alpha_i \in -{\rm Q}_+$, let us consider 
the following correspondence 
\[
E_C(\nu+\alpha_i) \xleftarrow{\varpi_3} E'_C(-\alpha_i, \nu+\alpha_i) \xrightarrow{\varpi_4} E_C(\nu).
\]
The morphism $\varpi_3 : E'_C(-\alpha_i, \nu+\alpha_i) \to E_C(\nu+\alpha_i) $ is defined by 
$((\bar\phi_j)_{j \in I}, (\phi'_j)_{j \in I}, (f_{\tau})_{\tau \in E} )
\mapsto (f'_{\tau})_{\tau \in E}$, 
where $(f'_{\tau})_{\tau \in E}$ is the quotient representation of $(f_{\tau})_{\tau \in E}$
and 
$\varpi_4 : E'_C(-\alpha_i, \nu+\alpha_i) \to E_C(\nu) $ by 
$((\bar\phi_j)_{j \in I}, (\phi'_j)_{j \in I}, (f_{\tau})_{\tau \in E} ) \mapsto (f_{\tau})_{\tau \in E}$.

\begin{defn}\hfill
    \begin{enumerate}
    \item For $f \in E_C(\nu)$, $\varepsilon^*_i(f) := 
    \dim \kernel ( (f_{\tau})_{\tau \in E, s(\tau) =i}:  
    \Bbbk^{\nu_i} \to \bigoplus_{\tau \in E, s(\tau) =i} \Bbbk^{\nu_{t(\tau)}})$. 
    \item For $\Lambda \in B_C(\nu)$, $\varepsilon^*_i(\Lambda) := \min_{f \in \Lambda} \{ \varepsilon_i^*(f) \}$.
    \item For $\Lambda \in B_C(\nu)$, $\varphi^*_i(\Lambda) := \varepsilon^*_i(\Lambda) + \langle h_i, \weight(\Lambda) \rangle$.
    \end{enumerate}
\end{defn}

\begin{defn}\label{defFibstar}
\hfill
\begin{enumerate}
\item $\inward_1(i):=\{j\in I~|~(j,i)\in E\}$.
\item $\inward_2(i):=\{k\in I~|~(k,j_1),(k,j_2)\in E~\text{for $j_1,j_2\in \inward_1(i)$ and $j_1\neq j_2$}\}$.
\item For $j_1,j_2\in \inward_1(i)$ with $j_1<_\text{lex}j_2$, a vertex $k(j_1,j_2)\in\inward_2(i)$ is defined by the unique vertex satisfying $(k(j_1,j_2),j_1), (k(j_1,j_2),j_2)\in E$. Note that any $k\in \inward_2(i)$ is uniquely expressed as $k(j_1,j_2)$ by $j_1,j_2\in\inward_1(i)$.
\begin{comment}
\item A linear map $F_i(f):\oplus_{j\in\inward_1(i)}\Bbbk^{\nu_j}\rightarrow\oplus_{k\in \inward_2(i)}\Bbbk^{\nu_k}$ is defined by the following diagram
\[
\begin{tikzcd}        \oplus_{j\in\inward_1(i)}\Bbbk^{\nu_j} \arrow["F_i(f)"]{rr} \arrow["F_i(f)_{j_1j_2}"']{drr}& &  \oplus_{k\in\inward_2(i)}\Bbbk^{\nu_k} \arrow[d, "p"]\\
          & & \Bbbk^{\nu_{k(j_1,j_2)}}
\end{tikzcd},
\]
where $p$ is the projection,  $j_1<_{\text{lex}}j_2$, and $F_i(f)_{j_1j_2}=(g_j): \oplus_{j\in\inward_1(i)}\Bbbk^{\nu_j}\rightarrow \Bbbk^{\nu_{k(j_1,j_2)}}$ is given by 
\[
g_j=
\begin{cases}
    f^T_{(k(j_1,j_2),j_1)}, & j=j_1,\\
    -f^T_{(k(j_1,j_2),j_2)}, & j=j_2,\\
    0, & {\rm otherwise}.
\end{cases}
\]
Here, in the case of $|\inward_1(i)|=1$, the linear map is understood as $F_i(f)=0$. 
\end{comment}

\item A linear map $F_i(f):\oplus_{j\in\inward_1(i)}\Bbbk^{\nu_j}\rightarrow\oplus_{k\in \inward_2(i)}\Bbbk^{\nu_k}$ 
is defined for each direct summand 
$F_i(f)_{jk}: \Bbbk^{\nu_j} \to \Bbbk^{\nu_k} $          
by 
\[F_i(f)_{jk}
=
\begin{cases}
f_{(k(j,j'), j)}^T & \text{if there is $j' \in \inward_1(i)$ with $j<_{\text{lex}}j'$ and $k(j,j') = k$},\\
-f_{(k(j',j), j)}^T & \text{if there is $j' \in \inward_1(i)$ with $j'<_{\text{lex}}j$ and $k(j',j) = k$},\\
0 & \text{otherwise}.
\end{cases}
\]
Here, in the case of $|\inward_1(i)|=1$, the linear map is understood as $F_i(f)=0$.

\item $\Exttilde^*_i(f):=(\kernel F_i(f))^*$.
\end{enumerate}
\end{defn}

\begin{prop}\label{prop:pi} \hfill
    \begin{enumerate}
    \item The morphism $\varpi_3$ is surjective and $\varpi_3^{-1}(f')$ is a vector bundle over $S(1,\nu_i-1) \times \prod_{j \neq i} \GL_{\nu_j}$ whose fibers are isomorphic to $ \Exttilde^*_i(f') $. 
    
    \item Let $\Lambda' \in B_C(\nu+\alpha_i)$. Define $\Lambda^{'3}$ to be the open subset of $\Lambda'$ consisting of points $f'$ at which the function 
    $f' \to \dim \Exttilde^*_i(f')$ takes the minimum value in $\Lambda'$. 
    Then the restriction of $\varpi_3$ to $\varpi^{-1}_3(\Lambda^{'3})$ is a smooth surjective morphism to $\Lambda^{'3}$ with connected fibers. 
    
    \item The image of the morphism $\varpi_4$ is the closed subset of $E_C(\nu)$ consisting of points $f$ with $\varepsilon^*_i(f)>0$.
    For $f \in E_C(\nu)$ with  $\varepsilon^*_i(f)>0$,  the fiber $\varpi^{-1}_4(f)$ is a fiber bundle 
    over $\prod_{j \neq i} \GL_{\nu_j} \times \left(
    \Bbbk^{\varepsilon^*_i(f)} \setminus \{ 0 \} \right)$ 
    whose fibers are isomorphic to $\GL_{\nu_i-1}$. 
    
    \item Assume that $\Lambda \in B_C(\nu)$ satisfies $\varepsilon^*_i(\Lambda)>0$. 
    Define $\Lambda^{ 4}$ to be the open subset of $\Lambda$ consisting of points at which the function $\varepsilon_i^*$ takes the minimum value in $\Lambda$.
    Then the restriction of $\varpi_4$ to $\varpi^{-1}_4(\Lambda^{ 4})$ is a smooth surjective morphism to $\Lambda^{ 4}$ with connected fibers. 
    \end{enumerate}
\end{prop}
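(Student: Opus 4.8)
The plan is to deduce Proposition \ref{prop:pi} from Proposition \ref{prop:varpi} by means of a duality functor, exploiting the self-opposite symmetry $\overrightarrow{G}_m \cong \overrightarrow{G}_m^{op}$ already alluded to in the text. Let $\sigma : I \to I$ be the point reflection $\sigma(i_1,\dots,i_d) = (m_1+1-i_1,\dots,m_d+1-i_d)$; it is an involution that reverses every arrow, hence realizes the isomorphism $\overrightarrow{G}_m \cong \overrightarrow{G}_m^{op}$. Fixing the standard bases and the corresponding dual bases of the coordinate spaces, I would define a contravariant functor $D$ as follows: for $f=(f_\tau)$ of dimension vector $(\nu_j)$, the representation $D(f)$ has vertex space $\Bbbk^{\nu_{\sigma(i)}}$ at each $i$, and its structure map along an arrow $i\to i+e_\ell$ is the transpose of the structure map of $f$ along the arrow $\sigma(i+e_\ell)\to\sigma(i)$. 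Since the transpose of a commuting square is again a commuting square, $D$ sends representations satisfying $C$ to representations satisfying $C$, and as transposition is a polynomial operation on matrix entries, $D$ restricts to an algebraic isomorphism $E_C(\nu) \xrightarrow{\sim} E_C(D\nu)$, where $D\nu$ is the dimension vector $(\nu_{\sigma(j)})_{j\in I}$.

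Next I would verify that $D$ intertwines the two correspondences. Being an anti-equivalence, $D$ turns a short exact sequence $0\to \bar M \to M \to M' \to 0$ into $0 \to D(M') \to D(M) \to D(\bar M)\to 0$, interchanging subrepresentations and quotient representations. Tracking dimension vectors, and using $\sigma^2=\id$ together with $D\alpha_i = \alpha_{\sigma(i)}$, this produces an algebraic isomorphism $E'_C(-\alpha_i,\nu+\alpha_i) \xrightarrow{\sim} E'_C(D(\nu+\alpha_i), -\alpha_{\sigma(i)})$ under which $\varpi_3$ (taking the quotient) corresponds to $\varpi_1$ (taking the sub) and $\varpi_4$ (taking the total representation) corresponds to $\varpi_2$, now for the vertex $\sigma(i)$ and total weight $D\nu$. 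Thus the whole correspondence $E_C(\nu+\alpha_i)\xleftarrow{\varpi_3} E'_C(-\alpha_i,\nu+\alpha_i)\xrightarrow{\varpi_4} E_C(\nu)$ is carried isomorphically onto the correspondence of Proposition \ref{prop:varpi} at the vertex $\sigma(i)$.

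It then remains to match the auxiliary data. From $\coker(g)^*\cong \ker(g^T)$ one gets $\varepsilon^*_i(f)=\varepsilon_{\sigma(i)}(D(f))$, and by construction the map $F_i(f)$ of Definition \ref{defFibstar} is, under the canonical identifications, the transpose of $F_{\sigma(i)}(D(f))$ from Definition \ref{defFib}, so that $\dim\Exttilde^*_i(f)=\dim\Exttilde_{\sigma(i)}(D(f))$. Consequently $D$ identifies the open loci $\Lambda^{'3}$ and $\Lambda^4$ with the loci $\bar\Lambda^1$ and $\Lambda^2$ of Proposition \ref{prop:varpi}, and assertions (2) and (4) then follow verbatim from parts (2) and (4) there. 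For (1) and (3) the fiber descriptions transport as well: dualizing the exact sequence sends $S(\nu_i-1,1)$ to $S(1,\nu_i-1)$, leaves the factor $\prod_{j\neq i}\GL_{\nu_j}$ unchanged, and replaces the parametrization by nonzero functionals $(\Bbbk^{\varepsilon_i(f)})^*\setminus\{0\}$ (a surjection onto $S_i$) by the parametrization by nonzero vectors $\Bbbk^{\varepsilon^*_i(f)}\setminus\{0\}$ (an injection from $S_i$); correspondingly a principal $\GL_{\nu_i-1}$-bundle becomes a fiber bundle with fiber $\GL_{\nu_i-1}$, exactly as stated.

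The main obstacle is the bookkeeping underlying the two middle steps: one must check, at the level of varieties rather than of isomorphism classes, that $D$ is a genuine algebraic isomorphism commuting with $\varpi_1\leftrightarrow\varpi_3$ and $\varpi_2\leftrightarrow\varpi_4$, and in particular that the recipe for $\Exttilde^*_i$, built from the transposes $f^T$ and the sets $\inward_1,\inward_2$, is canonically identified with the dual of $\Exttilde_{\sigma(i)}\circ D$, so that the minimum-dimension open subsets truly correspond. This is a careful but routine verification of the $\sigma$-reindexing, the dual-basis identifications, and the signs in the definition of $F_i$; once it is in place, each of (1)--(4) is immediate from Proposition \ref{prop:varpi}. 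Alternatively, one could bypass $D$ and repeat the proof of Proposition \ref{prop:varpi} with every map replaced by its transpose and every cokernel by the corresponding kernel, which is the precise sense in which the proofs are ``similar to the originals.''
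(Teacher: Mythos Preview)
Your proposal is correct and matches the paper's own approach: the paper explicitly constructs the isomorphism $a^*\circ D$ (your $\sigma$ is their $a$, and your combined $D$ is their $a^*\circ D$), draws the commutative diagram intertwining $(\varpi_1,\varpi_2)$ with $(\varpi_3,\varpi_4)$, and states that Proposition~\ref{prop:pi} follows from Proposition~\ref{prop:varpi} via this duality. Your alternative of repeating the proof of Proposition~\ref{prop:varpi} with transposes is also precisely what the paper means by ``the proofs of the statements below are similar to the originals.''
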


%This proposition is proved in the same way as Proposition \ref{prop:varpi}

\begin{defn}\label{def:efstar}
(1)
For $\Lambda' \in B_C(\nu +\alpha_i)$, we define $\tilde f_i^* \Lambda' \in B_C(\nu) \sqcup \{ 0 \}$ by 

\[
\tilde f_i^* \Lambda'
:=
\left\{
\begin{matrix}
\overline{\varpi_4 \varpi^{-1}_3(\Lambda^{'3})} 
&
\text{if $\overline{\varpi_4 \varpi^{-1}_3(\Lambda^{'3})} \in B_C(\nu)$},
\\
0
&
\text{otherwise}.
\end{matrix}
\right.
\] 

(2)
For $\Lambda \in B_C(\nu)$, we define $\tilde e_i^* \Lambda \in B_C(\nu + \alpha_i) \sqcup \{ 0 \}$ by 
\[
\tilde e_i^* \Lambda
:=
\left\{
\begin{matrix}
\overline{\varpi_3 \varpi^{-1}_4(\Lambda^{ 4})} 
&
\text{if $\varepsilon^*_i(\Lambda)>0$ and $\overline{\varpi_3 \varpi^{-1}_4(\Lambda^{ 4})} \in B_C(\nu+\alpha_i)$ },
\\
0
&
\text{otherwise}.
\end{matrix}
\right.
\] 
\end{defn}

\begin{comment}
\begin{lem}\label{efstar} %\hfill
    \begin{enumerate}
        \item If $\Lambda \in B_{C, \nu}$ and $ \tilde e_i^* \Lambda = \bar \Lambda \in B_{C, \nu + \alpha_i} $, we have $\Lambda = \tilde f_i^* \bar \Lambda$.

        \item If $\bar \Lambda \in  B_{C, \nu + \alpha_i}$ and $\Lambda = \tilde f_i^* \bar \Lambda \in B_{C, \nu } $, we have  $\tilde e_i^* \Lambda = \bar \Lambda$. 
    \end{enumerate}
\end{lem}
\end{comment}
%This lemma is proved in the same way as Lemma \ref{prop:ef}

\begin{thm}\label{thm:BCstar}
    The tuple $(B_C, \weight, (\varepsilon^*_i)_{i \in I}, (\varphi^*_i)_{i \in I}, (\tilde e^*_i)_{i \in I}, (\tilde f^*_i)_{i \in I})$ 
    is a crystal with respect to the root system associated with the quiver $\overrightarrow{G}_{m}$. 
\end{thm}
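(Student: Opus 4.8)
The plan is to obtain Theorem~\ref{thm:BCstar} from Theorem~\ref{thm:BC} by means of a contravariant duality, rather than rerunning the whole construction of Section~\ref{sect:constBC}. Let $\sigma\colon I\to I$ be the involution $\sigma(i_1,\dots,i_d)=(m_1+1-i_1,\dots,m_d+1-i_d)$. Flipping every coordinate identifies the opposite quiver $\overrightarrow{G}^{op}_{m}$ with $\overrightarrow{G}_{m}$ itself, so $\sigma$ is a graph automorphism sending the simple root $\alpha_i$ to $\alpha_{\sigma(i)}$ and preserving the Cartan matrix. For $f=(f_\tau)\in E_C(\nu)$ I would define the dual representation $Df$ whose space at vertex $i$ is $(\Bbbk^{\nu_{\sigma(i)}})^{*}$ and whose map along the arrow corresponding to $\tau$ is the transpose $f_\tau^{T}$. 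Since transposing a commutative square yields a commutative square in the opposite quiver, $Df$ again satisfies the relations $C$, so $D$ restricts to an isomorphism of affine varieties $E_C(\nu)\xrightarrow{\sim}E_C(\nu^{\sigma})$, where $\nu^{\sigma}$ denotes the dimension vector $i\mapsto\nu_{\sigma(i)}$. Being a homeomorphism, $D$ induces a bijection $B_C(\nu)\to B_C(\nu^{\sigma})$, $\Lambda\mapsto D\Lambda$, and hence an involution of $B_C$ preserving irreducibility, closures, and the dichotomy ``is an irreducible component or is $0$''.

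The heart of the argument is to check that $D$ intertwines the starred data at vertex $i$ with the unstarred data at vertex $\sigma(i)$. Writing $s_\sigma$ for the relabeling $\alpha_j\mapsto\alpha_{\sigma(j)}$ of the weight lattice, I would verify the identities
\[
\weight(D\Lambda)=s_\sigma(\weight\Lambda),\qquad
\varepsilon^{*}_i=\varepsilon_{\sigma(i)}\circ D,\qquad
\tilde e^{*}_i=D^{-1}\tilde e_{\sigma(i)}D,\quad \tilde f^{*}_i=D^{-1}\tilde f_{\sigma(i)}D.
\]
The first is immediate from the dimension count. The second holds because $\big((Df)_{\tau'}\big)_{t(\tau')=\sigma(i)}$ is exactly the transpose of $(f_\tau)_{s(\tau)=i}$, and the cokernel of a transpose is the dual of the kernel; passing to generic values commutes with the isomorphism $D$, so $\varepsilon^{*}_i(\Lambda)=\varepsilon_{\sigma(i)}(D\Lambda)$.

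For the operator identities I would observe that $D$ carries $E'_C(-\alpha_i,\nu+\alpha_i)$ isomorphically onto $E'_C(\nu^{\sigma}+\alpha_{\sigma(i)},-\alpha_{\sigma(i)})$ by interchanging the sub- and quotient data, thereby intertwining $\varpi_3$ with $\varpi_1$ and $\varpi_4$ with $\varpi_2$. One must also check that $\Exttilde^{*}_i(f)$ is dual to $\Exttilde_{\sigma(i)}(Df)$: this holds because $\sigma$ matches $\inward_1(i),\inward_2(i)$ with $\out_1(\sigma(i)),\out_2(\sigma(i))$ and the map $F_i$ in Definition~\ref{defFibstar} is built from the transposes of the entries of the map in Definition~\ref{defFib}. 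Consequently the minimal-dimension open loci $\Lambda^{'3}$ and $\bar\Lambda^{1}$ correspond under $D$, and the closures defining $\tilde e^{*}_i,\tilde f^{*}_i$ in Definition~\ref{def:efstar} are precisely the $D$-images of those defining $\tilde e_{\sigma(i)},\tilde f_{\sigma(i)}$ in Definition~\ref{defn:ef}. (The irreducibility of these closures, needed for $\tilde e^{*}_i,\tilde f^{*}_i$ to be well defined, is supplied by Proposition~\ref{prop:pi} exactly as Proposition~\ref{prop:varpi} supplies it in the unstarred case.)

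With the intertwining identities in hand, the five axioms of Definition~\ref{defn:crystal} for the starred datum follow formally from the unstarred crystal of Theorem~\ref{thm:BC} applied at $\sigma(i)$, using that $\sigma$ is an involution so that $s_\sigma^2=\id$ and $s_\sigma(\alpha_{\sigma(i)})=\alpha_i$. For example, axiom~(2) is obtained by transporting $\weight(\tilde e_{\sigma(i)}(D\Lambda))=\weight(D\Lambda)+\alpha_{\sigma(i)}$ and $\varepsilon_{\sigma(i)}(\tilde e_{\sigma(i)}(D\Lambda))=\varepsilon_{\sigma(i)}(D\Lambda)-1$ back through $D^{-1}$; axiom~(4) transports the mutual-inverse relation of Lemma~\ref{prop:ef}; axiom~(1) is the definition of $\varphi^{*}_i$; and axiom~(5) is automatic since $\varphi^{*}_i$ is everywhere integer-valued. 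I expect the main obstacle to be the operator identity, namely confirming that the auxiliary extension varieties and, crucially, the open subsets cut out by minimality of $\dim\Exttilde^{*}_i$ versus $\dim\Exttilde_{\sigma(i)}$ genuinely correspond under $D$; this is where the contravariance (sub $\leftrightarrow$ quotient) and the transpose in Definition~\ref{defFibstar} must be reconciled exactly. Should this bookkeeping prove cumbersome, an alternative is to mirror the proof of Theorem~\ref{thm:BC} directly, using Proposition~\ref{prop:pi} in place of Proposition~\ref{prop:varpi} to obtain axioms~(2) and~(3) from a rank count for $(f_\tau)_{s(\tau)=i}$ on a representation and its quotient, and proving the starred analogue of Lemma~\ref{prop:ef} for axiom~(4) by the same open-dense-fiber argument.
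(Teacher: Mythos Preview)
Your proposal is correct and follows essentially the same route as the paper: the authors also deduce Theorem~\ref{thm:BCstar} from Theorem~\ref{thm:BC} via the contravariant duality $a^{*}\circ D$ (transpose composed with the vertex involution $a=\sigma$), using the commutative diagram that intertwines $\varpi_{3},\varpi_{4}$ with $\varpi_{1},\varpi_{2}$ and thereby transports the unstarred crystal structure at $\sigma(i)$ to the starred one at $i$. Your observation that $\Exttilde^{*}_{i}(f)$ is dual to $\Exttilde_{\sigma(i)}(Df)$, so that the open loci $\Lambda'^{3}$ and $\bar\Lambda^{1}$ match, is exactly the bookkeeping the paper leaves implicit.
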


%This theorem is proved in the same way as Theorem \ref{thm:BC}. 

\begin{comment}
{\color{red}
Although we have the star Kashiwara operators, we do not have Kashiwara embeddings (See Theorem \ref{prop:BC2x2property}).
}
\end{comment}

We denote by $\overrightarrow{G}^{op}_{m}$ the quiver with the same vertices as $\overrightarrow{G}_{m}$ but reversed arrows. 
We define $E^{op}_C(\nu)$ and $E'^{op}_C(\bar\nu,  \nu')$ for $\overrightarrow{G}^{op}_{m}$
in the similar way to $E^{}_C(\nu)$ and $E'^{}_C(\bar\nu, \nu')$. 
Let $a$ be the isomorphism of quivers $\overrightarrow{G}^{}_{m} \to \overrightarrow{G}^{op}_{m}$ which send the vertex $(i_1,\dots,i_d)$ to $(m_1-i_1+1,\dots,m_d-i_d+1)$. 
Note that on the vertex set $I$ the map $a$ is an involution. 
Let $a^*\nu$  be the weight given by $a^*\nu = -\sum_{i \in I} \nu_{a(i)} \alpha_i$ for $\nu = -\sum_{i \in I} \nu_{i} \alpha_i$. 
We have morphisms of algebraic varieties $D: E_C(\nu) \to E^{op}_C(\nu)$ and $D^{op}: E^{op}_C(\nu) \to E^{}_C(\nu)$ given by taking dual of linear maps and identifications of vector spaces $\Bbbk^{\nu_i} \cong (\Bbbk^{\nu_i})^*$ given by the standard inner product on $\Bbbk^{\nu_i}$. Under this identification $D$ sends $(f_{\tau})$ to $(f_{\tau}^{T})$. 
These morphisms satisfy identities $D^{op} \circ D^{} = {\rm id}$ and $D^{} \circ D^{op} = {\rm id}$, which imply that $D$ and $D^{op}$ are isomorphisms. 
We have an isomorphism of varieties $a^*: E^{op}_C(\nu) \to E^{}_C(a^*\nu)$, which is a rearrangement of representation induced by the permutation of vertices.  
Since taking dual sends the exact sequence 
$0 \to (\bar f_{\tau}) \to (f_{\tau}) \to S_i \to 0$ to $0 \to S_i \to (f_{\tau})^* \to (\bar f_{\tau})^* \to 0$, we also have isomorphisms $D: E'^{}_C(\bar \nu, \nu') \to E'^{op}_C(\nu', \bar \nu)$, $D^{op}: E'^{op}_C(\bar \nu, \nu') \to E'^{}_C(\nu', \bar \nu)$ and $a^*: E'^{op}_C(\bar \nu, \nu') \to E'^{}_C(a^*\bar \nu, a^*\nu')$. 
Then, we obtain the following commutative diagram 
\begin{center}
\begin{tikzcd}
    E_C(\nu) \arrow[d, "a^{*} \circ D"] & E'_C(\nu, -\alpha_i) \arrow[l, "\varpi_1"'] \arrow[r, "\varpi_2"] \arrow[d, "a^{*} \circ D"]& E_C(\nu-\alpha_i) \arrow[d, "a^{*} \circ D"]\\
    E_C(a^*\nu)  & E'_C(-\alpha_{a(i)}, a^*\nu) \arrow[l, "\varpi_3"'] \arrow[r, "\varpi_4"]& E_C(a^*\nu-\alpha_{a(i)}),
\end{tikzcd}
\end{center}
where all vertical morphisms are isomorphisms. From this, we may deduce 
Proposition \ref{prop:pi}, Theorem \ref{thm:BCstar}, Theorem \ref{thm:ef_an} (2), Theorem \ref{thm:2x2efstar} and Proposition \ref{prop:varepsilonstar2x2} by Proposition \ref{prop:varpi}, Theorem \ref{thm:BC}, Theorem \ref{thm:ef_an} (1), Theorem \ref{thm:2x2ef} and Proposition \ref{prop:varepsilon2x2}, respectively.

\begin{rem}
    In the construction of Kashiwara and Saito \cite{KS}, the permutation of vertices is not required because their quiver is the double of a quiver. 
\end{rem}

\section{Kashiwara crystals in the 1-parameter case}\label{sect:An}
In this section, we explicitly describe the Kashiwara operators in Definition \ref{defn:ef} and \ref{def:efstar} for $\overrightarrow{G}_{(n)}$ (i.e., $A_n$ quiver with the equi-orientation): 
\begin{center}
\begin{tikzcd}
1 \arrow[r,] & 2 \arrow[r,] & \cdots  \arrow[r,] & n.
\end{tikzcd}
\end{center}
This setting corresponds to studying one-parameter persistence modules. 
For a representation $f$, the linear map on an arrow $i\rightarrow i+1$ is written as $f_i: \Bbbk^{\nu_{i}}\to\Bbbk^{\nu_{i+1}}$. Since we do not have any relations in the quiver, $E_C(\nu)$ simply becomes a vector space specified by the dimension vector $(\nu_i)_{i=1}^n\in \N^n$, hence $B_C(\nu)=\{E_C(\nu)$\}. 

\begin{thm}\label{thm:ef_an}
\begin{enumerate}
\item 
The action of the Kashiwara operators is described as follows.
\begin{align*}
&\tilde e_i E_C(\nu) =
        \begin{cases}
            E_C(\nu+\alpha_i)\quad\quad\!\! &  \text{if $\nu_{i-1} < \nu_i$}, \\
            0 & \text{if $\nu_{i-1}\geq\nu_i$}
        \end{cases}\\
&\tilde f_i E_C(\nu) =
        \begin{cases}
            E_C(\nu-\alpha_i)\quad\quad\!\! &  \text{if $\nu_{i-1}\leq \nu_i$}, \\
            0 & \text{if $\nu_{i-1}>\nu_i$},
        \end{cases}
\end{align*}
where we set $\nu_{-1}=0$.

\item
The action of the $\ast$-Kashiwara operators is described as follows.
\begin{align*}
&\tilde e^{*}_i E_C(\nu) =
    \begin{cases}
        E_C(\nu+\alpha_i)\quad\quad\!\! &  \text{if $\nu_{i} > \nu_{i+1}$}, \\
        0 & \text{if $\nu_{i}\leq\nu_{i+1}$}
    \end{cases}\\
&\tilde f^{*}_i E_C(\nu) =
    \begin{cases}
        E_C(\nu-\alpha_i)\quad\quad\!\! &  \text{if $\nu_{i}\geq \nu_i$}, \\
        0 & \text{if $\nu_{i}<\nu_{i+1}$},
    \end{cases}
\end{align*}
where we set $\nu_{n+1}=0$.

\end{enumerate}
\end{thm}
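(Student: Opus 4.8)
The plan is to exploit the degeneracy of the $A_n$ situation. Since $\overrightarrow{G}_{(n)}$ carries no relations, $E_C(\nu) = \prod_i \Hom(\Bbbk^{\nu_i},\Bbbk^{\nu_{i+1}})$ is an affine space, hence irreducible, so $B_C(\nu) = \{E_C(\nu)\}$ is a singleton; each Kashiwara operator can therefore only return the unique component of the shifted weight space or $0$, and the whole content is to decide which. First I would record the two local invariants entering Definition \ref{defn:ef}. Because $\out_1(i) = \{i+1\}$ (or $\emptyset$ when $i=n$) has at most one element, the map $F_i(f)$ of Definition \ref{defFib} vanishes identically, so $\Exttilde_i(f) = \bigoplus_{j \in \out_1(i)} \Bbbk^{\nu_j}$ has dimension independent of $f$. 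Meanwhile the only arrow into $i$ is $i-1 \to i$, so $\varepsilon_i(f) = \dim \coker(f_{i-1}\colon \Bbbk^{\nu_{i-1}} \to \Bbbk^{\nu_i}) = \nu_i - \rank f_{i-1}$, whose value at a generic $f$ is $\max(0,\nu_i - \nu_{i-1})$ since a generic matrix has full rank (the edge vertex $i=1$ being governed by the boundary convention, where $\varepsilon_1(f) = \nu_1$). In particular $\varepsilon_i(E_C(\nu)) = \max(0,\nu_i-\nu_{i-1})$.

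Next I would compute $\tilde f_i$. As $\dim \Exttilde_i$ is constant on $E_C(\nu)$, the open set $\bar\Lambda^1$ of Proposition \ref{prop:varpi}(2) is all of $E_C(\nu)$, and since $\varpi_1$ is surjective we get $\varpi_2\varpi_1^{-1}(E_C(\nu)) = \image \varpi_2$ for the correspondence $E_C(\nu) \xleftarrow{\varpi_1} E'_C(\nu,-\alpha_i) \xrightarrow{\varpi_2} E_C(\nu-\alpha_i)$. By Proposition \ref{prop:varpi}(3) this image is the closed locus $\{g \in E_C(\nu-\alpha_i) : \varepsilon_i(g) > 0\}$. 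Hence $\tilde f_i E_C(\nu)$ equals the full component $E_C(\nu-\alpha_i)$ precisely when a generic $g$ satisfies $\varepsilon_i(g) > 0$, and is a proper closed subset — hence not a component, so the operator returns $0$ — otherwise. Computing the generic rank of $g_{i-1}\colon \Bbbk^{\nu_{i-1}} \to \Bbbk^{\nu_i+1}$, noting the shift in the $i$-th coordinate of $\nu - \alpha_i$, gives generic $\varepsilon_i(g) = \max(0,\nu_i+1-\nu_{i-1})$, which is positive iff $\nu_{i-1} \leq \nu_i$. This is exactly the stated dichotomy for $\tilde f_i$.

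For $\tilde e_i$ I would argue from $\tilde f_i$ via the reciprocity of Lemma \ref{prop:ef}. If $\nu_{i-1} \geq \nu_i$ then $\varepsilon_i(E_C(\nu)) = 0$, so $\tilde e_i E_C(\nu) = 0$ straight from Definition \ref{defn:ef}(2). If $\nu_{i-1} < \nu_i$, then applying the $\tilde f_i$ formula to the weight $\nu+\alpha_i$, whose coordinates satisfy $(\nu+\alpha_i)_{i-1} = \nu_{i-1} \leq \nu_i - 1 = (\nu+\alpha_i)_i$, yields $\tilde f_i E_C(\nu+\alpha_i) = E_C(\nu)$; here $\nu+\alpha_i \in -{\rm Q}_+$ because $\nu_i > \nu_{i-1} \geq 0$. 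Lemma \ref{prop:ef}(2) then upgrades this to $\tilde e_i E_C(\nu) = E_C(\nu+\alpha_i)$, giving the claimed formula. Alternatively, one checks directly that $\varpi_2^{-1}(E_C(\nu)^2)$ is a nonempty, hence dense, open subset of the irreducible variety $E'_C(\nu+\alpha_i,-\alpha_i)$ (irreducible by Proposition \ref{prop:varpi}(2) and Lemma \ref{lem:pullback_irr}), and that the dominant map $\varpi_1$ sends a dense subset onto a dense subset, so its closure is all of $E_C(\nu+\alpha_i)$.

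Finally, part (2) follows from part (1) by the duality established at the end of Section \ref{sect:constBC}: the isomorphisms $a^* \circ D$ fit into the commutative diagram intertwining $(\varpi_1,\varpi_2)$ for $\overrightarrow{G}_{(n)}$ with $(\varpi_3,\varpi_4)$, and for the $A_n$ quiver the vertex involution is $a(i) = n+1-i$. Transporting the formulas of part (1) through this identification — under which $\varepsilon_i$ and $\tilde e_i,\tilde f_i$ become $\varepsilon^*_{a(i)}$ and $\tilde e^*_{a(i)},\tilde f^*_{a(i)}$ — converts the comparison of $\nu_{i-1}$ and $\nu_i$ into the corresponding comparison of $\nu_i$ and $\nu_{i+1}$, which is exactly the $\ast$-statement; equivalently one repeats the computation with $\varepsilon^*_i(f) = \nu_i - \rank f_i$ of generic value $\max(0,\nu_i-\nu_{i+1})$. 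The main obstacle throughout is the component-versus-proper-closed-subset distinction: the operators are genuinely ``closure-then-truncate'', and deciding whether $\overline{\varpi_2\varpi_1^{-1}(\bar\Lambda^1)}$ fills the shifted affine space reduces to a generic-rank inequality whose boundary cases — the interchange of strict and non-strict inequalities for $\tilde e_i$ versus $\tilde f_i$ — must be tracked carefully through the $\pm\alpha_i$ shift of the dimension vector.
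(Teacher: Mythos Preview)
Your argument is correct and uses the same ingredients as the paper's proof, but in reverse order: the paper computes $\tilde e_i$ first by checking when $\varepsilon_i(\Lambda)>0$ and then deduces $\tilde f_i$ via Lemma~\ref{prop:ef}, whereas you compute $\tilde f_i$ first and deduce $\tilde e_i$. Your ordering is arguably cleaner: the observation that $\dim\Exttilde_i$ is constant forces $\bar\Lambda^1 = E_C(\nu)$, so $\overline{\varpi_2\varpi_1^{-1}(\bar\Lambda^1)}$ is exactly $\overline{\image\varpi_2}$, and Proposition~\ref{prop:varpi}(3) identifies this set explicitly, making the ``is it the full component or a proper closed subset'' dichotomy immediate. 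The paper's route through $\tilde e_i$ is terser on precisely this point---it asserts the result once $\varepsilon_i(\Lambda)>0$ is established, without spelling out why $\overline{\varpi_1\varpi_2^{-1}(\Lambda^2)}$ fills $E_C(\nu+\alpha_i)$ rather than a proper subvariety---whereas your alternative direct argument (irreducibility of $E'_C(\nu+\alpha_i,-\alpha_i)$ plus density of $\varpi_2^{-1}(\Lambda^2)$) makes this explicit. Both routes invoke the same duality for part~(2).
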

\begin{proof}
We first consider the action of $\tilde e_i$. For a given $f\in \Lambda^2\subset E_C(\nu)$, we study whether we can construct the following commutative diagram
\[
\begin{tikzcd}
    & \vdots\arrow [d]& \vdots\arrow[d]& \vdots\arrow[d]& \\
    0 \arrow[r]& \Bbbk^{\nu_{i-1}} \arrow[r] \arrow[d,"\bar{f}_{i-1}"]& \Bbbk^{\nu_{i-1}} \arrow[r]\arrow[d,"f_{i-1}"] & 0 \arrow[r]\arrow[d] & 0 \\
   0 \arrow[r]& \Bbbk^{\nu_i-1} \arrow[r,"\bar\phi_i"]\arrow[d] & \Bbbk^{\nu_i} \arrow[r,"\phi'_i"]\arrow[d] & \Bbbk \arrow[r]\arrow[d] & 0\\
    & \vdots & \vdots& \vdots& 
%    0 \arrow[r]& A \arrow[r]\arrow[d] & B \arrow[r]\arrow[d] & C \arrow[r]\arrow[d] & 0 \\
%   
\end{tikzcd}
\]
with the horizontal sequences to be exact. Since $f\in \Lambda^2$ implies $\rank f_{i-1}=\min \{\nu_{i-1},\nu_i\}$, $\varepsilon_i(f)>0$ is equivalent to $\nu_{i-1}<\nu_i$. Therefore, we obtain
the statement of $\tilde e_i$ from Definition \ref{defn:ef}. 

From this formula, we have 
\[
\tilde e_i E_C(\nu-\alpha_i)=\begin{cases}
E_C(\nu) & \text{if $\nu_{i-1}<\nu_i+1$},\\
0 & \text{if $\nu_{i-1}\geq \nu_i+1$}, 
\end{cases}
\]
which leads to the statement of $\tilde f_i$.

The statement for $\tilde e^{*}_{i}$ and $\tilde f^{*}_{i}$ is proved similarly.
\end{proof}

\section{Kashiwara crystals in the 2 $\times$ 2 case}\label{sect:2x2}

In this section, we explicitly describe the Kashiwara operators in Definition \ref{defn:ef} and \ref{def:efstar} in the $2 \times 2$ case. We note that the generalized Cartan matrix associated to $\overrightarrow{G}_{(2,2)}$ is the same as that of the affine root system $\tilde A_3$. 

Let us enumerate the vertices of $\overrightarrow{G}_{(2,2)}$ by 1,2,3,4 as in the diagram below. The dimension vector and its corresponding weight are written by $(\nu_1, \nu_2, \nu_3, \nu_4) \in \mathbb{N}$ and $\nu=-\sum_{i=1}^4\nu_i\alpha_i\in P$, respectively. 
For a representation $f$ and an arrow $i \to j$ in $\overrightarrow{G}_{(2,2)}$, the linear map 
$\Bbbk^{\nu_i} \to \Bbbk^{\nu_j}$ is written as $f_{ij}$. 

\begin{center}
\begin{tikzcd}
    3 \arrow[r, ]& 4 \\
    1 \arrow[r, ] \arrow[u, ]& 2 \arrow[u, ]
\end{tikzcd},
\hspace{10pt}
\begin{tikzcd}
    \Bbbk^{\nu_3} \arrow[r, "f_{34}"] & \Bbbk^{\nu_4} \\
    \Bbbk^{\nu_1} \arrow[r, "f_{12}"'] \arrow[u, "f_{13}"]& \Bbbk^{\nu_2} \arrow[u, "f_{24}"']
\end{tikzcd}.
\end{center}

\begin{prop}\label{prop:irr2x2}\hfill
    \begin{enumerate}
        \item For $\nu_1+\nu_4 \geq \nu_2+\nu_3$, 
        there is a bijection between 
        $B_C(\nu)$ and the set 
        $\{ (r_1, r_2) \in \mathbb N \times \mathbb N ~\vert~
        r_1 + r_2 = \nu_2 + \nu_3
        , 0 \leq r_1 \leq \nu_1
        , 0 \leq r_2 \leq \nu_4
        \}$. 
        
        \item For $\nu_1+\nu_4 < \nu_2+\nu_3$, 
         $\# B_C(\nu) =1 $. 
    \end{enumerate}
\end{prop}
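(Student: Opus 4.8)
The plan is to recognize $E_C(\nu)$ as a classical \emph{variety of complexes} and to read off its irreducible components from the well-understood orbit stratification of that variety.

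First I would collapse the commutative square into a two-term complex. Put $p=\nu_1$, $q=\nu_2+\nu_3$, $r=\nu_4$. A representation $f=(f_{12},f_{13},f_{24},f_{34})$ is the same datum as the pair
\[
a=\begin{pmatrix} f_{12}\\ f_{13}\end{pmatrix}\colon \Bbbk^{\nu_1}\to \Bbbk^{\nu_2}\oplus\Bbbk^{\nu_3}=\Bbbk^{q},
\qquad
b=(f_{24},\,-f_{34})\colon \Bbbk^{q}\to \Bbbk^{\nu_4},
\]
and the commutative relation $f_{24}f_{12}=f_{34}f_{13}$ becomes exactly $ba=0$. Since the block decompositions $\Hom(\Bbbk^{\nu_1},\Bbbk^{q})=\Hom(\Bbbk^{\nu_1},\Bbbk^{\nu_2})\oplus\Hom(\Bbbk^{\nu_1},\Bbbk^{\nu_3})$ and $\Hom(\Bbbk^{q},\Bbbk^{\nu_4})=\Hom(\Bbbk^{\nu_2},\Bbbk^{\nu_4})\oplus\Hom(\Bbbk^{\nu_3},\Bbbk^{\nu_4})$ are linear isomorphisms of the ambient affine spaces carrying the defining equations to $ba=0$, this identifies $E_C(\nu)$ with $Z:=\{(a,b)\mid ba=0\}\subset \Hom(\Bbbk^p,\Bbbk^q)\times\Hom(\Bbbk^q,\Bbbk^r)$.

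Next I would use the orbit structure of $Z$ under $G=\GL_p\times\GL_q\times\GL_r$ acting by $(g_1,g_2,g_3)\cdot(a,b)=(g_2ag_1^{-1},g_3bg_2^{-1})$. Choosing bases adapted to the flag $0\subseteq\image a\subseteq\kernel b\subseteq\Bbbk^q$ (available since $ba=0$ forces $\image a\subseteq\kernel b$) puts every complex into a normal form depending only on $(s,t)=(\rank a,\rank b)$; hence the $G$-orbits are exactly the rank strata, indexed by
\[
\cP=\{(s,t)\in\N\times\N\mid s\le\min(p,q),\ t\le\min(q,r),\ s+t\le q\}.
\]
Each orbit $O_{s,t}$ is irreducible (orbit of a connected group) and there are finitely many, so $Z=\bigsqcup_{(s,t)\in\cP}O_{s,t}$. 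Because rank is lower semicontinuous and, in adapted bases, any complex of ranks $(s,t)$ admits explicit one-parameter scaling degenerations to ranks $(s',t')$ for every $(s',t')\le(s,t)$ (preserving $\image a\subseteq\kernel b$), the closure order is componentwise: $O_{s',t'}\subseteq\overline{O_{s,t}}$ iff $s'\le s$ and $t'\le t$. Thus the irreducible components of $Z$ are the closures of the $(s,t)$ maximal in $(\cP,\le)$, and it remains to compute these maxima. In case (1), where $\nu_1+\nu_4\ge q$, one checks in all subcases that $\min(p,q)+\min(q,r)\ge q$, so the only candidate interior maximum $(\min(p,q),\min(q,r))$ fails to be interior; every maximal element therefore lies on $s+t=q$, and such admissible points are precisely those with $s+t=q$, $0\le s\le\nu_1$, $0\le t\le\nu_4$. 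Writing $(r_1,r_2)=(s,t)$ yields the asserted bijection. In case (2), where $\nu_1+\nu_4<q$, the constraints $s\le p=\nu_1$ and $t\le r=\nu_4$ force $s+t\le\nu_1+\nu_4<q$, so $\cP$ is the full box $[0,\nu_1]\times[0,\nu_4]$ with unique maximum $(\nu_1,\nu_4)$, giving $\#B_C(\nu)=1$.

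The main obstacle is the geometric input on $Z$: that the rank strata are single $G$-orbits (hence irreducible) and that their closures are governed by the componentwise order. This is the content of the classical theory of varieties of complexes (De Concini--Strickland), which I would either cite or reprove via the adapted-basis normal form together with the scaling degenerations above; once it is in place, extracting the maximal elements of $\cP$ in the two regimes is an elementary combinatorial case analysis.
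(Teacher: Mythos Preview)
Your proposal is correct and essentially identical to the paper's own proof: both identify $E_C(\nu)$ with the variety of two-term complexes $W(\nu_1,\nu_2+\nu_3,\nu_4)$ via the map $(f_{\tau})\mapsto\bigl(\begin{smallmatrix}f_{12}\\ f_{13}\end{smallmatrix},\,(f_{24},-f_{34})\bigr)$, use the $\GL_{\nu_1}\times\GL_{\nu_2+\nu_3}\times\GL_{\nu_4}$ orbit stratification by the rank pair, establish the componentwise closure order via rank semicontinuity and explicit one-parameter degenerations, and then extract the maximal elements in the two regimes. The paper likewise points to the classical literature on varieties of complexes (Hesselink, Musili--Seshadri, De~Concini--Strickland) for this geometric input.
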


\begin{proof}
    Let $W(a_1, a_2, a_3)$ be the subvariety of $ \Hom(\Bbbk^{a_1} , \Bbbk^{a_2}) \times \Hom(\Bbbk^{a_2} , \Bbbk^{a_3})$ consisting of $(\alpha_1, \alpha_2) \in \Hom(\Bbbk^{a_1} , \Bbbk^{a_2}) \times \Hom(\Bbbk^{a_2} , \Bbbk^{a_3})$ satisfying $\alpha_2 \circ \alpha_1 = 0$. 
    Let $p: E_C(\nu) \to W(\nu_1, \nu_2+\nu_3, \nu_4)$ be a morphism sending $(f_{\tau})$ to 
    $( 
    \begin{bmatrix}
    f_{12} & f_{13}
    \end{bmatrix}^T, 
    \begin{bmatrix}
    f_{24} & -f_{34}
    \end{bmatrix}
    )$. 
%$\left( \left( \begin{matrix} -f_{12} \\ f_{13} \end{matrix} \right), (f_{24}, f_{34}) \right)$
    The morphism $p$ is an isomorphism because the commutativity relation 
    $f_{24} \circ f_{12}= f_{34} \circ f_{13}$
    corresponds to the  relation
    $\alpha_2 \circ \alpha_1 = 0$.

    The changes of bases on $\Bbbk^{a_i}$ ($i=1,2,3$) induce the group action of $\GL_{a_1} \times \GL_{a_2} \times \GL_{a_3}$ on $W(a_1, a_2, a_3)$. 
    The orbits under this action are parameterized by $r_1 = \mathrm{rank} (\alpha_1)$ and $r_2 = \mathrm{rank} (\alpha_2)$, where 
    $(r_1, r_2)$ satisfies $ 0 \leq r_1 \leq a_1$, $r_1+r_2 \leq a_2$ and $0 \leq r_2 \leq a_3$. 
    Let us write the orbit corresponding to $(r_1, r_2)$ by $\mathbb{O}(r_1, r_2)$. 

    We claim that $\overline{\mathbb{O}(r_1, r_2)} \supset \mathbb{O}(r'_1, r'_2)$ if and only if $r_1 \geq r'_1$ and $r_2 \geq r'_2$, where this order on $\N\times \N$ is later denoted by
    $(r_1,r_2)\succeq (r'_1,r'_2)$.
    Let us assume $\overline{\mathbb{O}(r_1, r_2)} \supset \mathbb{O}(r'_1, r'_2)$. Since the rank of matrices 
    does not increase 
    at the closure, we have $r_1 \geq r'_1$ and $r_2 \geq r'_2$. 
    Let us next assume $r_1 \geq r'_1$ and $r_2 \geq r'_2$. Since any element of $\mathbb{O}(r_1, r_2)$ 
    can be transformed to a Smith normal form by conjugations, we can construct a 1-parameter degeneration to any element of $\mathbb{O}(r'_1, r'_2)$ from $\mathbb{O}(r_1, r_2)$. Thus we have $\overline{\mathbb{O}(r_1, r_2)} \supset \mathbb{O}(r'_1, r'_2)$. 
    
    Therefore the irreducible components of $W(a_1, a_2, a_3)$ are parameterized by $(r_1, r_2) \in \mathbb N \times \mathbb N$ which satisfies $ 0 \leq r_1 \leq a_1$, $r_1+r_2 \leq a_2, 0 \leq r_2 \leq a_3$ and is maximal with respect to $\succeq$. 
    If $a_1 +a_3 < a_2$, $(a_1, a_3)$ is the maximum element. 
    If $a_1 +a_3 \geq a_2$, maximal elements are those $(r_1, r_2)$ which satisfy $ 0 \leq r_1 \leq a_1$, $0 \leq r_2 \leq a_3$ and $r_1+r_2=a_2$. 
\end{proof}

\begin{rem}\label{ddv}
\begin{enumerate}
    \item The algebraic varieties $\overline{\mathbb{O}(r_1, r_2)}$ 
and 
$W(a_1, a_2, a_3)$
are called the double determinantal variety \cite{hesselink} and 
the variety of complexes \cite{DCS}, respectively.
The parametrization of irreducible components of $W(a_1, a_2, a_3)$ is already given by Hesselink \cite[\S 6.5]{hesselink} and Musili and Seshadri \cite[\S 11]{MS}, and a certain generalization is also given by Geiß, Labardini-Fragoso and Schröer \cite{GLS}. A generalization of this theorem for larger $\overrightarrow{G}_{m}$ has not yet been understood well. 

\item From Table 4 in \cite{hesselink}, we see that $E_C(\nu)$ is not equi-dimensional.
\end{enumerate}
\end{rem}

\begin{comment}
\begin{defn}\hfill
    \begin{enumerate}
        \item For $\nu_1+\nu_4 \geq \nu_2+\nu_3$, the irreducible component corresponding to $(r_1, r_2) \in \mathbb N \times \mathbb N$ in Proposition \ref{prop:irr2x2} (1) is written as 
        $(\nu ; r_1, r_2)$. 

        \item For $\nu_1+\nu_4 < \nu_2+\nu_3$, the unique irreducible component is written as $(\nu; \nu_1, \nu_4)$.
    \end{enumerate}
\end{defn}
\end{comment}

\begin{defn}
    For integers $r_1, r_2 \in \mathbb{N}$,  an algebraic subset $(\nu; r_1, r_2)$ of $E_C(\nu)$ is defined by properties 
    $\rank \begin{bmatrix} f_{12} & f_{13} \end{bmatrix}^T \leq r_1$ and 
    $\rank \begin{bmatrix} f_{24} & -f_{34} \end{bmatrix} \leq r_2$. 
\end{defn}

Using this definition the parametrization of irreducible components is rephrased as follows. 

\begin{cor}\label{cor:ex_irr}\hfill
    \begin{enumerate}
        \item For $\nu_1+\nu_4 \geq \nu_2+\nu_3$, 
        the elements of $B_C(\nu)$ are precisely given by $(\nu; r_1, r_2)$ for $(r_1, r_2) \in \mathbb N \times \mathbb N$ satisfying $
        r_1 + r_2 = \nu_2 + \nu_3
        ,~0 \leq r_1 \leq \nu_1
        ,~0 \leq r_2 \leq \nu_4$. 
        
        \item For $\nu_1+\nu_4 < \nu_2+\nu_3$, $E_C(\nu)=(\nu; \nu_1, \nu_4)$ is irreducible. 
    \end{enumerate}
\end{cor}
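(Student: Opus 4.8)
The plan is to recognize that this corollary is simply a reformulation of Proposition \ref{prop:irr2x2} in terms of the rank loci $(\nu; r_1, r_2)$, so the whole argument reduces to identifying each set $(\nu; r_1, r_2)$ with an orbit closure already analyzed in that proposition. First I would carry over the isomorphism $p : E_C(\nu) \xrightarrow{\sim} W(\nu_1, \nu_2+\nu_3, \nu_4)$ and observe that, since $p$ sends $(f_\tau)$ to $(\alpha_1, \alpha_2) = (\begin{bmatrix} f_{12} & f_{13}\end{bmatrix}^T, \begin{bmatrix} f_{24} & -f_{34}\end{bmatrix})$, we have $\rank \alpha_1 = \rank \begin{bmatrix} f_{12} & f_{13}\end{bmatrix}^T$ and $\rank \alpha_2 = \rank \begin{bmatrix} f_{24} & -f_{34}\end{bmatrix}$. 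Hence $p$ restricts to an isomorphism from $(\nu; r_1, r_2)$ onto the closed subvariety $Z(r_1, r_2) := \{(\alpha_1,\alpha_2) \in W(\nu_1, \nu_2+\nu_3, \nu_4) \mid \rank \alpha_1 \le r_1,\ \rank \alpha_2 \le r_2\}$.

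The key step is then to show $Z(r_1, r_2) = \overline{\mathbb{O}(r_1, r_2)}$ whenever $(r_1, r_2)$ satisfies the admissibility bounds $0 \le r_1 \le \nu_1$, $0 \le r_2 \le \nu_4$ and $r_1 + r_2 \le \nu_2 + \nu_3$. Both sides are $\GL$-stable unions of orbits, so it suffices to compare which orbits they contain. On one hand, $Z(r_1, r_2)$ is by definition the union of all $\mathbb{O}(r'_1, r'_2)$ with $r'_1 \le r_1$ and $r'_2 \le r_2$; on the other hand, the closure description established in the proof of Proposition \ref{prop:irr2x2} gives $\overline{\mathbb{O}(r_1, r_2)} = \bigcup_{(r_1, r_2) \succeq (r'_1, r'_2)} \mathbb{O}(r'_1, r'_2)$, and $(r_1, r_2) \succeq (r'_1, r'_2)$ means exactly $r'_1 \le r_1$ and $r'_2 \le r_2$. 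The two unions therefore coincide, so $(\nu; r_1, r_2)$ is irreducible and equals the orbit closure corresponding to $(r_1, r_2)$.

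With this identification in hand, the statement follows by reading off the parametrization of Proposition \ref{prop:irr2x2}: its irreducible components are the closures $\overline{\mathbb{O}(r_1,r_2)}$ for the pairs $(r_1, r_2)$ that are maximal with respect to $\succeq$. In case (1), where $\nu_1 + \nu_4 \ge \nu_2 + \nu_3$, these maximal pairs are precisely those with $r_1 + r_2 = \nu_2 + \nu_3$, $0 \le r_1 \le \nu_1$, $0 \le r_2 \le \nu_4$, each giving the component $(\nu; r_1, r_2)$. In case (2), where $\nu_1 + \nu_4 < \nu_2 + \nu_3$, the unique maximal pair is $(\nu_1, \nu_4)$; since the rank of a linear map $\Bbbk^{\nu_1} \to \Bbbk^{\nu_2 + \nu_3}$ (resp.\ $\Bbbk^{\nu_2+\nu_3} \to \Bbbk^{\nu_4}$) is automatically at most $\nu_1$ (resp.\ $\nu_4$), the conditions defining $(\nu; \nu_1, \nu_4)$ are vacuous and $(\nu; \nu_1, \nu_4) = E_C(\nu)$, which is therefore irreducible. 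The only point requiring care---and the nearest thing to an obstacle---is the orbit-theoretic identification $Z(r_1, r_2) = \overline{\mathbb{O}(r_1, r_2)}$, but this is immediate once one invokes the closure computation already performed in Proposition \ref{prop:irr2x2}; no new geometric input is needed.
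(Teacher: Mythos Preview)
Your proof is correct and follows exactly the route implicit in the paper: the corollary is presented there as a direct rephrasing of Proposition~\ref{prop:irr2x2} via the definition of $(\nu; r_1, r_2)$, with no separate argument given. You have simply spelled out the identification $(\nu; r_1, r_2) \cong \overline{\mathbb{O}(r_1, r_2)}$ that the paper leaves tacit.
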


The explicit description of general representations in $(\nu; r_1, r_2)$ is derived in Appendix.

\begin{thm}\label{thm:2x2ef} 
The action of the Kashiwara operators on $B_C$ is given as follows. 
    \begin{enumerate}
        \item 
        \begin{align*}
%        \[ 
        &\tilde e_1 (\nu;r_1, r_2) =
        \begin{cases}
            (\nu+\alpha_1;r_1-1, r_2) & \text{if $\nu_1 + \nu_4 \leq \nu_2 + \nu_3$}, \\
            (\nu+\alpha_1;r_1, r_2) & \text{if $\nu_1 + \nu_4 > \nu_2 + \nu_3 $ and $\nu_1 > r_1$}, \\
            0 & \text{if $\nu_1 + \nu_4 > \nu_2 + \nu_3 $ and $\nu_1 = r_1$}.
        \end{cases}\\
%        \]
%        \[ 
        &\tilde e_2 (\nu;r_1, r_2) =
        \begin{cases}
            (\nu+\alpha_2;r_1, r_2) & \text{if $\nu_2 > r_1$ and $\nu_1 + \nu_4 < \nu_2 + \nu_3$}, \\
            (\nu+\alpha_2;r_1, r_2-1) & \text{if $\nu_2 > r_1$  and $\nu_1 + \nu_4 \geq \nu_2 + \nu_3$}, \\
            0 & \text{if $\nu_2 \leq r_1$}.
        \end{cases}\\
%        \]
%        \[ 
        &\tilde e_3 (\nu;r_1, r_2) =
        \begin{cases}
            (\nu+\alpha_3;r_1, r_2) & \text{if $\nu_3 > r_1$ and $\nu_1 + \nu_4 < \nu_2 + \nu_3$}, \\
            (\nu+\alpha_3;r_1, r_2-1) & \text{if $\nu_3 > r_1$ and $\nu_1 + \nu_4 \geq \nu_2 + \nu_3$}, \\
            0 & \text{if $\nu_3 \leq r_1$}.
        \end{cases}\\
%        \]
%        \[ 
        &\tilde e_4 (\nu;r_1, r_2) =
        \begin{cases}
            (\nu+\alpha_4;r_1, r_2)\quad\quad\!\! &  \text{if $\nu_4 > r_2$}, \\
            0 & \text{if $\nu_4 = r_2$}.
        \end{cases}
%        \]
        \end{align*}

        \item 
%        \[ 
        \begin{align*}
        &\tilde f_1 (\nu;r_1, r_2) =
        \begin{cases}
            (\nu-\alpha_1;r_1 + 1, r_2) & \text{if $\nu_1+\nu_4 < \nu_2+\nu_3$}, \\
            (\nu-\alpha_1;r_1 , r_2) & \text{if $\nu_1+\nu_4 \geq \nu_2+\nu_3$}.
        \end{cases}\\
%        \]
%        \[ 
        &\tilde f_2 (\nu;r_1, r_2) =
        \begin{cases}
            (\nu-\alpha_2;r_1, r_2) & \text{if $\nu_2 \geq r_1$ and $\nu_1 + \nu_4 \leq \nu_2 + \nu_3$}, \\
            (\nu-\alpha_2;r_1, r_2 + 1) & \text{if $\nu_2 \geq r_1$ and $\nu_1 + \nu_4 >  \nu_2 + \nu_3$}, \\
            0 & \text{if $\nu_2 < r_1$}.
        \end{cases}\\
%        \]
%        \[ 
        &\tilde f_3 (\nu;r_1, r_2) =
        \begin{cases}
            (\nu-\alpha_3;r_1, r_2) & \text{if $\nu_3 \geq r_1$ and $\nu_1 + \nu_4 \leq  \nu_2 + \nu_3$}, \\
            (\nu-\alpha_3;r_1, r_2 + 1) & \text{if $\nu_3 \geq r_1$ and $\nu_1 + \nu_4 >  \nu_2 + \nu_3$}, \\
            0 & \text{if $\nu_3 < r_1$}.
        \end{cases}\\
%        \] 
%        \[ 
        &\tilde f_4 (\nu;r_1, r_2) =
        \begin{cases}
            (\nu-\alpha_4;r_1, r_2) \quad\quad\!\!& \text{if $\nu_1+\nu_4 \geq \nu_2+\nu_3$}, \\
            0 & \text{if $\nu_1+\nu_4 < \nu_2+\nu_3$}.
        \end{cases}
%        \]
        \end{align*}
    \end{enumerate}
\end{thm}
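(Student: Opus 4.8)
The plan is to transport everything through the isomorphism $p\colon E_C(\nu)\xrightarrow{\sim} W(\nu_1,\nu_2+\nu_3,\nu_4)$ of Proposition \ref{prop:irr2x2}, under which $\alpha_1=\begin{bmatrix}f_{12}\\ f_{13}\end{bmatrix}$, $\alpha_2=\begin{bmatrix}f_{24}&-f_{34}\end{bmatrix}$, and to read off each invariant from the pair of generic ranks $(r_1,r_2)=(\rank\alpha_1,\rank\alpha_2)$ labelling $(\nu;r_1,r_2)$. Since each component is irreducible, every rank takes a well-defined generic value on a dense open set, and I first pin these down. At a generic point $\image\alpha_1\subseteq\ker\alpha_2$ with $\dim\image\alpha_1=r_1$; projecting $\image\alpha_1$ onto $\Bbbk^{\nu_2}$ along $\Bbbk^{\nu_3}$ and using $r_1\le\nu_2+\nu_3-r_2$, a short generic-intersection computation gives $\rank f_{12}=\min(r_1,\nu_2)$ and, symmetrically, $\rank f_{13}=\min(r_1,\nu_3)$. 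Feeding this into Definition \ref{defn:epsilonphi} yields
\[
\varepsilon_1(\nu;r_1,r_2)=\nu_1,\qquad
\varepsilon_2=\max(0,\nu_2-r_1),\qquad
\varepsilon_3=\max(0,\nu_3-r_1),\qquad
\varepsilon_4=\nu_4-r_2 ,
\]
which already reproduces the vanishing thresholds $\nu_i\le r_1$ (at $i=2,3$) and $\nu_4=r_2$ (at $i=4$) appearing in the statement.

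Next I would compute the operators from Definition \ref{defn:ef} by pushing a generic point through the correspondence $E_C(\nu+\alpha_i)\xleftarrow{\varpi_1}E'_C(\nu+\alpha_i,-\alpha_i)\xrightarrow{\varpi_2}E_C(\nu)$. For $\tilde f_i$ I take a generic $\bar f$ in the component and form the generic extension $0\to\bar f\to f\to S_i\to 0$; by Proposition \ref{prop:varpi}(1) the extension datum is an element of $\Exttilde_i(\bar f)$, which is explicit in every case: for $i=1$ it adjoins a generic vector $v\in\ker\alpha_2$ to $\image\alpha_1$; for $i=4$ it is $0$; and for $i=2,3$ (where $|\out_1(i)|=1$, so $\Exttilde_i=\Bbbk^{\nu_4}$) it adjoins a generic vector $w\in\Bbbk^{\nu_4}$ to $\image\alpha_2$. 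Tracking ranks produces the candidate target label: e.g.\ $\tilde f_1$ sends $(r_1,r_2)$ to $(r_1+1,r_2)$ when $r_1+r_2<\nu_2+\nu_3$ and to $(r_1,r_2)$ otherwise, while $\tilde f_2$ sends $(r_1,r_2)$ to $(r_1,\min(r_2+1,\nu_4))$, always with the dimension vector changing by $-\alpha_i$. The operators $\tilde e_i$ are handled dually via a generic quotient by $S_i$: at the source $1$ this restricts $\alpha_1$ to a generic hyperplane (lowering $r_1$ only when $r_1=\nu_1$), at the sink $4$ it corestricts $\alpha_2$ to a generic hyperplane containing its image (leaving $r_2$ fixed), and at $2,3$ it restricts to a generic hyperplane containing $\image f_{12}$ (possibly lowering $r_2$). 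Lemma \ref{prop:ef} identifies each $\tilde e_i$ with the partial inverse of $\tilde f_i$, which I would use throughout as a consistency check.

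The main obstacle — and the source of the case distinctions — is deciding when the irreducible closed set $\overline{\varpi_2\varpi_1^{-1}(\bar\Lambda^{1})}$ (resp.\ $\overline{\varpi_1\varpi_2^{-1}(\Lambda^{2})}$) is genuinely an irreducible \emph{component} of the target variety rather than a proper closed subset, in which case Definition \ref{defn:ef} forces the value $0$. By Proposition \ref{prop:irr2x2} and Corollary \ref{cor:ex_irr}, the candidate $(\nu\mp\alpha_i;\bar r_1,\bar r_2)$ is a component precisely when $(\bar r_1,\bar r_2)$ is maximal with respect to $\succeq$ for the new dimension vector; concretely, when the new dimension vector lies in the regime $\nu_1+\nu_4\ge\nu_2+\nu_3$, this means the rank sum $\bar r_1+\bar r_2$ must attain its maximal value there. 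Comparing the rank sum output by the extension/restriction against this maximal value yields exactly the trichotomies in the statement. For instance, $\tilde e_1$ returns $0$ when $\nu_1+\nu_4>\nu_2+\nu_3$ and $r_1=\nu_1$: restricting $\alpha_1$ to a generic hyperplane then drops $r_1$ to $\nu_1-1$ while $r_2$ is unchanged, so $\bar r_1+\bar r_2=\nu_2+\nu_3-1$ falls one short of maximal and the image lands inside $(\nu+\alpha_1;\nu_1-1,r_2+1)$; an entirely analogous short-fall at $i=2,3,4$ accounts for the remaining $0$-outputs (including the degenerate boundary where $\varepsilon_4=0$).

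The bulk of the work is this bookkeeping, carried out once per vertex. To halve it I would exploit the automorphism of $\overrightarrow{G}_{(2,2)}$ reflecting across the diagonal and swapping vertices $2$ and $3$: it exchanges $f_{12}\leftrightarrow f_{13}$ and $f_{24}\leftrightarrow f_{34}$, hence fixes $(r_1,r_2)$ and interchanges $\varepsilon_2\leftrightarrow\varepsilon_3$, so the vertex-$3$ formulas follow formally from the vertex-$2$ ones. Combined with the Lemma \ref{prop:ef} inversion relating $\tilde e_i$ to $\tilde f_i$, this reduces the theorem to the two genuinely independent computations at the source $1$ and the interior vertex $2$ (together with the elementary sink $4$), each of which is the generic linear algebra sketched above followed by the maximality test against Corollary \ref{cor:ex_irr}.
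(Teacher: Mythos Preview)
Your overall strategy—reading everything through the isomorphism $E_C(\nu)\cong W(\nu_1,\nu_2+\nu_3,\nu_4)$ and tracking generic rank pairs—is natural, and your computations of the $\varepsilon_i$ and of the candidate labels are correct. But there is a genuine gap in the step where you decide whether the candidate is actually an irreducible component.

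You claim that $\overline{\varpi_2\varpi_1^{-1}(\bar\Lambda^{1})}$ equals the orbit closure $\overline{\mathbb O(\bar r_1,\bar r_2)}$ for the generic rank pair $(\bar r_1,\bar r_2)$, and then test only whether this orbit closure is maximal. This identification can fail: the image of the correspondence may carry \emph{extra} closed conditions on the individual ranks $\rank f_{ij}$ that are invisible at the level of $(\rank\alpha_1,\rank\alpha_2)$. Concretely, take $\nu=(2,1,1,2)$ and $\bar\Lambda=(\nu;2,0)$. For $\tilde f_2$ your recipe gives the candidate $(\nu-\alpha_2;2,1)$, which \emph{is} a component of $E_C(2,2,1,2)$ by Corollary~\ref{cor:ex_irr}, so your maximality test passes. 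Yet in every extension $0\to\bar f\to f\to S_2\to 0$ the map $f_{12}$ factors through the old $\Bbbk^{\nu_2}=\Bbbk^1$, forcing $\rank f_{12}\le 1$; a generic point of $(\nu-\alpha_2;2,1)$, however, has $\rank f_{12}=2$. Hence $\overline{\varpi_2\varpi_1^{-1}(\bar\Lambda)}\subsetneq(\nu-\alpha_2;2,1)$ and $\tilde f_2(\nu;2,0)=0$, as the theorem asserts. The same phenomenon produces all the $\tilde f_2=0$ (and $\tilde f_3=0$) cases with $\nu_2<r_1$ (resp.\ $\nu_3<r_1$), which your method would miss.

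The paper sidesteps this issue entirely by reversing the order of computation: it establishes each $\tilde e_i$ first, not by tracking generic ranks but by exhibiting the \emph{split} sequence $0\to\bar f\to\bar f\oplus S_i\to S_i\to 0$ for $\bar f$ in a dense open subset of the target component, which immediately gives the inclusion (target component)$\subset\varpi_1\varpi_2^{-1}(\Lambda^2)$; irreducibility then forces equality. Only after the $\tilde e_i$ formulas are in hand does the paper obtain $\tilde f_i$ purely via Lemma~\ref{prop:ef}, with no direct computation of $\overline{\varpi_2\varpi_1^{-1}(\bar\Lambda^{1})}$ at all. If you want to keep your direct approach for $\tilde f_i$, you must additionally verify that the hidden constraint $\rank f_{12}\le\nu_2$ (and its analogue at vertex $3$) is vacuous on the candidate component—equivalently, that $\nu_2\ge r_1$—and set $\tilde f_i=0$ otherwise.
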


\begin{proof}
    Let $\Lambda=(\nu;r_1, r_2)$. 
    Recall that the set $\varpi_1 \varpi^{-1}_2(\Lambda^{ 2})$ consists of $(\bar f_{\tau})_{\tau \in E}$ such that there is a short exact sequence $0 \to (\bar f_{\tau}) \to (f_{\tau}) \to S_i \to 0$ with $(f_{\tau}) \in \Lambda^{ 2}$, where $S_i$ is the simple representation associated to the vertex $i$. 

    \noindent $\bullet$ $\tilde e_1$:  
    Since there are no arrows ending at the vertex 1, we have $\Lambda = \Lambda^{ 2}$.     
    In the case $\nu_1 + \nu_4 \leq \nu_2 + \nu_3$, we have $r_1=\nu_1$, $r_2=\nu_4$ and 
    $\Lambda = E_C(\nu)$. 
    For any $(\bar f_{\tau}) \in (\nu+\alpha_1;\nu_1-1, \nu_4)$, there exists a split exact sequence $0 \to (\bar f_{\tau}) \to (\bar f_{\tau}) \oplus S_1 \to S_1 \to 0$ with $(\bar{f}_{\tau})\oplus S_1\in\Lambda$, which implies  $(\nu+\alpha_1;\nu_1-1, \nu_4) \subset \varpi_1 \varpi^{-1}_2(\Lambda)$. 
    Given that the left-hand side represents an irreducible component whereas the right-hand side is an irreducible set, $\tilde e_1(\nu;\nu_1,\nu_4)=(\nu+\alpha_1;\nu_1-1,\nu_4)$ holds. 
    For $\nu_1 + \nu_4 > \nu_2 + \nu_3$ and $\nu_1>r_1$,
    we can apply the same proof  by taking $(\bar f_{\tau}) \in (\nu+\alpha_1;r_1, r_2)$. 
\begin{comment}
    Let us take $(\bar f_{\tau}) \in (\nu+\alpha_1;r_1, r_2)$. Then, there exists a split short exact sequence 
    $0 \to (\bar f_{\tau}) \to (\bar{f}_{\tau})\oplus S_1 \to S_1 \to 0$ with $(\bar{f}_{\tau})\oplus S_1\in\Lambda$, implying 
    $(\nu+\alpha_1;r_1, r_2)\subset \varpi_1 \varpi^{-1}_2(\Lambda)$. Since the left-hand side is an irreducible component and the right-hand side is an irreducible set, the inclusion actually becomes the equality. 
\end{comment}

    Assume  $\nu_1 + \nu_4 > \nu_2 + \nu_3$ and $\nu_1=r_1$. We now show that $\overline{\varpi_1 \varpi^{-1}_2(\Lambda)}$ is not an irreducible component of $E_{C}(\nu+\alpha_1)$. 
    Any representation $(\bar f_\tau)\in \varpi_1 \varpi^{-1}_2(\Lambda)$ satisfies $\rank 
    \begin{bmatrix}
    \bar f_{12} & \bar f_{13}\end{bmatrix}^{T}
    \leq r_1-1$ and $\rank 
    \begin{bmatrix}
    \bar f_{24} & -\bar f_{34}\end{bmatrix}
    \leq r_2$, implying  
    $\rank \begin{bmatrix}\bar f_{12} & \bar f_{13}\end{bmatrix}^{T} + 
    \rank \begin{bmatrix}\bar f_{24} & -\bar f_{34}\end{bmatrix} < \nu_2+\nu_3$.  
    Then, Proposition \ref{prop:irr2x2} shows  that $\overline{\varpi_1 \varpi^{-1}_2(\Lambda)}$ is not an irreducible component of $E_{C}(\nu+\alpha_1)$.

    \noindent $\bullet$ $\tilde e_2$ and $\tilde e_3$: 
    Assume $\nu_2 \leq r_1$. 
    In this case, we have
    $\Lambda^{ 2}=
    \{(f_\tau)~|~\varepsilon_2(f)=0\}$ 
    from $\nu_2\leq r_1\leq \nu_1$.
    Proposition \ref{prop:varpi} (3) implies $\varpi^{-1}_2(\Lambda^{ 2})=\emptyset$ and thus we have $\tilde e_2 \Lambda = 0$. 
    
    Assume $\nu_2>r_1$ and $\nu_1 + \nu_4 < \nu_2 + \nu_3$. In this case, we have $r_1=\nu_1$ and $r_2=\nu_4$. 
    Then, the set $\{(\bar{f}_\tau)\in(\nu+\alpha_2;\nu_1, \nu_4)~|~\rank \bar{f}_{12}=\nu_1\}$ becomes (nonempty) open. Then, since any element $(\bar f_{\tau})$ of this open set forms a short exact sequence $0 \to (\bar f_{\tau}) \to (\bar f_{\tau}) \oplus S_2 \to S_2 \to 0$ and $(\bar f_{\tau}) \oplus S_2\in\Lambda^{ 2}$, we have $(\bar f_{\tau})\in\varpi_1 \varpi^{-1}_2(\Lambda^{ 2})$. This implies $(\nu+\alpha_2; \nu_1, \nu_4) = \tilde e_2 \Lambda$.
    For $\nu_2>r_1$ and $\nu_1 + \nu_4 \geq \nu_2 + \nu_3 $, we can apply the same proof above by taking  
    $(\bar f_{\tau}) \in (\nu+\alpha_2;r_1, r_2-1)$ 
    with $\rank \bar f_{12}= r_1$. 
   The statement for $\tilde e_3$ is  proved in the same way as $\tilde e_2$.

    \noindent $\bullet$ $\tilde e_4$:  
    The set $\Lambda^{ 2}$ consists of $(f_{\tau})$ satisfying $\rank \begin{bmatrix}
        f_{24} & f_{34}
    \end{bmatrix} = r_2$.
    Since  $S_4$ is projective, there exists a short exact sequence $0 \to (\bar f_{\tau}) \to (f_{\tau}) \to S_4 \to 0$ if and only if $S_4$ is a direct summand of $(f_{\tau})$, which implies $\nu_4 >r_2$.

Then, for $\nu_4>r_2$, let us take $(\bar{f}_\tau)\in (\nu+\alpha_4;r_1,r_2)$ with $\rank \begin{bmatrix} \bar{f}_{24} & \bar{f}_{34}\end{bmatrix}=r_2$. 
Then, we can construct
a short exact sequence $0 \to (\bar f_{\tau}) \to (\bar f_{\tau}) \oplus S_4 \to S_4 \to 0$ with $(\bar f_{\tau}) \oplus S_4\in\Lambda^{ 2}$.
\begin{comment}
Then, we can construct $(f_\tau)\in\Lambda^{ 2}$ satisfying the above split exact sequence by $f_{12}=\bar{f}_{12}, f_{13}=\bar{f}_{13}, f_{24}=\left(\begin{bmatrix}{cc}\bar{f}_{24} & 0\end{bmatrix}\right)^T, f_{34}=\left(\begin{bmatrix}{cc}\bar{f}_{34} & 0\end{bmatrix}\right)^T$. 
\end{comment}
This concludes $(\nu+\alpha_4;r_1,r_2)=\tilde{e}_4\Lambda$.

    Now we consider $\tilde f_i$. In Lemma \ref{prop:ef}, we proved that for $\Lambda, \tilde \Lambda \in B_C$, $\tilde e_i \Lambda = \bar \Lambda$ if and only if $\Lambda = \tilde f_i \bar \Lambda $. 

    \noindent $\bullet$ $\tilde f_1$:  
    If $\nu_1+\nu_4 < \nu_2+\nu_3$, then we already proved $\tilde e_1 (\nu-\alpha_1;r_1+1, r_2) =  (\nu;r_1, r_2)$. 
    Therefore we have $\tilde f_1 (\nu;r_1, r_2) = (\nu-\alpha_1;r_1+1, r_2)$. 
    Similarly, for $\nu_1+\nu_4 \geq \nu_2+\nu_3$, we have $\tilde f_1 (\nu;r_1, r_2) = (\nu-\alpha_1;r_1, r_2)$
    from $\tilde e_1 (\nu-\alpha_1;r_1, r_2) =  (\nu;r_1, r_2)$.

    \noindent $\bullet$ $\tilde f_2$ and $\tilde f_3$:  
    If $\nu_2 \geq r_1$ and $\nu_1 + \nu_4 \leq \nu_2 + \nu_3$, then $\tilde e_2 (\nu-\alpha_2;r_1, r_2 )= (\nu;r_1, r_2)$. 
    If $\nu_2 \geq r_1$ and $\nu_1 + \nu_4 > \nu_2 + \nu_3$, then $\tilde e_2 (\nu-\alpha_2;r_1, r_2 + 1)= (\nu;r_1, r_2)$. 
    If $\nu_2 < r_1$, there are no irreducible components $\Lambda$ satisfying $\tilde e_2 \Lambda = (\nu;r_1, r_2)$.
    The statement for $\tilde f_3$ is proved in the same way as $\tilde f_2$. 

    \noindent $\bullet$ $\tilde f_4$:  
    If $\nu_1+\nu_4 \geq \nu_2+\nu_3$, then $\tilde e_4 (\nu-\alpha_4;r_1, r_2) = (\nu;r_1, r_2)$. 
    Therefore we have $\tilde f_4 (\nu;r_1, r_2) = (\nu-\alpha_4;r_1, r_2)$. 
    If $\nu_1+\nu_4 < \nu_2+\nu_3$, then we have $\nu_1=r_1$ and $\nu_4=r_2$ and  we also see that $E_C(\nu-\alpha_4)$ have only one irreducible component $(\nu-\alpha_4;\nu_1, \nu_4+1)$. Since we showed $\tilde e_4(\nu-\alpha_4;\nu_1, \nu_4+1)=0$, 
    we have $\tilde f_4(\nu;r_1, r_2)=0$.
\end{proof}

 The following $\ast$-version is proved in a similar way to Theorem \ref{thm:2x2ef}.

\begin{thm}\label{thm:2x2efstar}
The action of the $\ast$-Kashiwara operators is described as follows.
    \begin{enumerate}
        \item 
        \begin{align*}
%        \[ 
        &\tilde e^*_1 (\nu;r_1, r_2) =
        \begin{cases}
            (\nu+\alpha_1;r_1, r_2) \quad\quad\! & \text{if $\nu_1 > r_1$}, \\
            0 & \text{if $\nu_1 = r_1$}.
        \end{cases}\\
%        \]
%        \[ 
        &\tilde e^*_2 (\nu;r_1, r_2) =
        \begin{cases}
            (\nu+\alpha_2;r_1, r_2) & \text{if $\nu_2 > r_2$ and $\nu_1+\nu_4<\nu_2+\nu_3$}, \\
            (\nu+\alpha_2;r_1-1, r_2) & \text{if $\nu_2 > r_2$ and $\nu_1+\nu_4 \geq \nu_2+\nu_3$}, \\
            0 & \text{if $\nu_2 \leq r_2$}.
        \end{cases}\\
%        \]
%        \[ 
        &\tilde e^*_3 (\nu;r_1, r_2) =
        \begin{cases}
            (\nu+\alpha_3;r_1, r_2) & \text{if $\nu_3 > r_2$ and $\nu_1+\nu_4<\nu_2+\nu_3$}, \\
            (\nu+\alpha_3;r_1-1, r_2) & \text{if $\nu_3 > r_2$ and $\nu_1+\nu_4 \geq \nu_2+\nu_3$}, \\
            0 & \text{if $\nu_3 \leq r_2$}.
        \end{cases}\\
%        \]
%        \[ 
        &\tilde e^*_4 (\nu;r_1, r_2) =
        \begin{cases}
            (\nu+\alpha_4;r_1, r_2-1) & \text{if $\nu_1+\nu_4 \leq \nu_2+\nu_3$}, \\
            (\nu+\alpha_4;r_1, r_2) & \text{if $\nu_1+\nu_4 > \nu_2+\nu_3$ and $\nu_4 > r_2$}, \\
            0 & \text{if $\nu_1+\nu_4 > \nu_2+\nu_3$ and $\nu_4 = r_2$}.
        \end{cases}
%        \]
        \end{align*}
        
        \item 
%        \[ 
        \begin{align*}
        &\tilde f^*_1 (\nu;r_1, r_2) =
        \begin{cases}
            (\nu-\alpha_1;r_1, r_2) \quad\quad\!& \text{if $\nu_1+\nu_4 \geq \nu_2+\nu_3$}, \\
            0 & \text{if $\nu_1+\nu_4 < \nu_2+\nu_3$}.
        \end{cases}\\
%        \]
%        \[ 
        &\tilde f^*_2 (\nu;r_1, r_2) =
        \begin{cases}
            (\nu-\alpha_2;r_1, r_2) & \text{if $\nu_2 \geq r_2$ and $\nu_1+\nu_4 \leq \nu_2+\nu_3$} \\
            (\nu-\alpha_2;r_1+1, r_2) & \text{if $\nu_2 \geq r_2$ and $\nu_1+\nu_4 > \nu_2+\nu_3$}, \\
            0 & \text{if $\nu_2 < r_2$}.
        \end{cases}\\
%        \]
%        \[ 
        &\tilde f^*_3 (\nu;r_1, r_2) =
        \begin{cases}
            (\nu-\alpha_3;r_1, r_2) & \text{if $\nu_3 \geq r_2$ and $\nu_1+\nu_4 \leq \nu_2+\nu_3$}, \\
            (\nu-\alpha_3;r_1+1, r_2) & \text{if $\nu_3 \geq r_2$ and $\nu_1+\nu_4 > \nu_2+\nu_3$}, \\
            0 & \text{if $\nu_3 < r_2$}.
        \end{cases}\\
%        \]
%        \[ 
        &\tilde f^*_4 (\nu;r_1, r_2) =
        \begin{cases}
            (\nu-\alpha_4;r_1, r_2+1) & \text{if $\nu_1+\nu_4 < \nu_2+\nu_3$}, \\
            (\nu-\alpha_4;r_1, r_2) & \text{if $\nu_1+\nu_4 \geq \nu_2+\nu_3$}.
        \end{cases}
%        \]
        \end{align*}
    \end{enumerate}
\end{thm}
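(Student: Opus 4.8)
The plan is to avoid a direct geometric computation with $\varpi_3,\varpi_4$ and instead deduce all eight formulas from the corresponding formulas in Theorem~\ref{thm:2x2ef} by transporting them through the duality isomorphism $a^*\circ D$ introduced at the end of Section~\ref{sect:constBC}. Recall that the commutative diagram there exhibits $a^*\circ D$ as an isomorphism carrying the correspondence $E_C(\nu)\xleftarrow{\varpi_1}E'_C(\nu,-\alpha_i)\xrightarrow{\varpi_2}E_C(\nu-\alpha_i)$, which defines $\tilde e_i$ and $\tilde f_i$, onto the correspondence $E_C(a^*\nu)\xleftarrow{\varpi_3}E'_C(-\alpha_{a(i)},a^*\nu)\xrightarrow{\varpi_4}E_C(a^*\nu-\alpha_{a(i)})$, which defines $\tilde e^*_{a(i)}$ and $\tilde f^*_{a(i)}$. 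Since $a^*\circ D$ is an isomorphism of varieties, it induces a bijection $\Psi$ on irreducible components, and the intertwining of the two correspondences yields the conjugation relations $\tilde f^*_j=\Psi\circ\tilde f_{a(j)}\circ\Psi^{-1}$ and $\tilde e^*_j=\Psi\circ\tilde e_{a(j)}\circ\Psi^{-1}$ for every $j\in I$.

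The first task is to make $\Psi$ completely explicit in the $2\times 2$ case. Here $a$ is the involution of the vertex set given by $1\leftrightarrow 4$ and $2\leftrightarrow 3$, so $a^*$ reverses the dimension vector, sending $\nu=(\nu_1,\nu_2,\nu_3,\nu_4)$ to $a^*\nu=(\nu_4,\nu_3,\nu_2,\nu_1)$. I would then trace through $D$ (transposition of all maps) followed by $a^*$ (relabelling) to see that $g=a^*\circ D(f)$ satisfies $g_{12}=f_{34}^T$, $g_{13}=f_{24}^T$, $g_{24}=f_{13}^T$, $g_{34}=f_{12}^T$. Comparing the two defining rank conditions of Proposition~\ref{prop:irr2x2}, the identities $\rank \begin{bmatrix} g_{12} & g_{13}\end{bmatrix}^T=\rank\begin{bmatrix} f_{24} & -f_{34}\end{bmatrix}$ and $\rank\begin{bmatrix} g_{24} & -g_{34}\end{bmatrix}=\rank\begin{bmatrix} f_{12} & f_{13}\end{bmatrix}^T$ (column and row operations) let one read off that $\Psi$ sends the component $(\nu;r_1,r_2)$ to $(a^*\nu;r_2,r_1)$; in particular $\Psi$ is an involution on $B_C$, so $\Psi^{-1}=\Psi$.

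With $\Psi$ in hand, each formula of Theorem~\ref{thm:2x2efstar} is obtained mechanically: to compute $\tilde f^*_j(\nu;r_1,r_2)$ one applies $\Psi$ (swapping $r_1,r_2$ and reversing the dimension vector), applies the formula for $\tilde f_{a(j)}$ from Theorem~\ref{thm:2x2ef}, and applies $\Psi$ again; symmetrically for $\tilde e^*_j$. The bookkeeping is straightforward once one notes that the balance $\nu_1+\nu_4$ versus $\nu_2+\nu_3$ is invariant under the reversal $a^*$, while a dimension condition such as ``$\nu_3\geq r_1$'' occurring in $\tilde f_3$ becomes the condition ``$\nu_2\geq r_2$'' of $\tilde f^*_2$ after reversal together with the swap $r_1\leftrightarrow r_2$. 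I would record one representative verification in full, say $\tilde f^*_2=\Psi\circ\tilde f_3\circ\Psi$, checking that its three cases reproduce the three displayed cases, and then remark that the remaining seven operators are handled identically.

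The only genuine content, and hence the main obstacle, is the explicit identification $\Psi(\nu;r_1,r_2)=(a^*\nu;r_2,r_1)$: one must verify that transposing the linear maps interchanges the two composite-map rank conditions cutting out the components, and that the vertex relabelling $a$ realigns the transposed quiver $\overrightarrow{G}^{op}_{(2,2)}$ with the standard orientation of $\overrightarrow{G}_{(2,2)}$ so that the roles of $r_1$ and $r_2$ are exactly swapped. Once this is established, every remaining step is the purely formal substitution above, and no geometry beyond the diagram of Section~\ref{sect:constBC} and the already-proved Theorem~\ref{thm:2x2ef} is required.
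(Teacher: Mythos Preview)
Your proposal is correct and matches the paper's own approach. The paper explicitly sets up exactly this duality argument at the end of Section~\ref{sect:constBC}, stating that Theorem~\ref{thm:2x2efstar} may be deduced from Theorem~\ref{thm:2x2ef} via the commutative diagram involving $a^*\circ D$; your identification $\Psi(\nu;r_1,r_2)=(a^*\nu;r_2,r_1)$ and the conjugation relations $\tilde f^*_j=\Psi\circ\tilde f_{a(j)}\circ\Psi$, $\tilde e^*_j=\Psi\circ\tilde e_{a(j)}\circ\Psi$ are exactly what that diagram encodes, and the case-by-case transport you describe is the intended mechanical verification.
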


Next we compute some invariants of irreducible components. 

\begin{defn}\label{def:epsilonprime}
    For each $i \in I$, we define maps $\varepsilon_i', \varphi_i', \varepsilon_i^*{}', \varphi_i^*{}': B_C \to \mathbb{N} \sqcup \{\infty\}$ by 
\begin{comment}
    \begin{align*}
        \varepsilon'_i(\Lambda):= \max \{ k \in \mathbb{N} \ \vert \ \tilde e^{k}_i \Lambda \neq 0 \} \\
        \varphi'_i(\Lambda) = \sup \{ k \in \mathbb{N} \ \vert \ \tilde f^{k}_i \Lambda \neq 0 \} \\
        \varepsilon_i^*{}'(\Lambda) = \max \{ k \in \mathbb{N} \ \vert \ \tilde e^{*k}_i \Lambda \neq 0 \}\\
        \varphi_i^*{}'(\Lambda) = \sup \{ k \in \mathbb{N} \ \vert \ \tilde f^{*k}_i \Lambda \neq 0 \}
    \end{align*}
\end{comment}
    \begin{align*}
        \varepsilon'_i(\Lambda) &= \max \{ k \in \mathbb{N} \ \vert \ \tilde e^{k}_i \Lambda \neq 0 \}, &
        \varphi'_i(\Lambda) &= \sup \{ k \in \mathbb{N} \  \vert  \ \tilde f^{k}_i \Lambda \neq 0 \}, \\
        \varepsilon_i^*{}'(\Lambda) & = \max \{ k \in \mathbb{N} \ \vert \ \tilde e^{*k}_i \Lambda \neq 0 \}, &
        \varphi_i^*{}'(\Lambda) &= \sup \{ k \in \mathbb{N} \ \vert \ \tilde f^{*k}_i \Lambda \neq 0 \}. 
    \end{align*}
\end{defn}

\begin{prop}\label{prop:varepsilon2x2} 
The values of the functions $\varepsilon_i, \varphi_i, \varepsilon_i', \varphi_i'$ are computed as follows. 
    \begin{enumerate}
\begin{comment}
        \item 
        \begin{align*}
        & \varepsilon_1 (\nu;r_1, r_2) = \nu_1\\
        & \varepsilon_2 (\nu;r_1, r_2) =
        \begin{cases}
            \nu_2-r_1& \text{if $\nu_2>r_1$}, \\
            0 & \text{if $\nu_2\leq r_1$}.
        \end{cases}\\
        & \varepsilon_3 (\nu;r_1, r_2) =
        \begin{cases}
            \nu_3-r_1 & \text{if $\nu_3>r_1$}, \\
            0 & \text{if $\nu_3\leq r_1$}.
        \end{cases}\\
        & \varepsilon_4 (\nu;r_1, r_2) = \nu_4-r_2
        \end{align*}
        
        \item
        \begin{align*}
        & \varphi_1 (\nu;r_1, r_2) = -\nu_1+\nu_2+\nu_3\\
        & \varphi_2 (\nu;r_1, r_2) =
        \begin{cases}
            -\nu_2+\nu_1+\nu_4-r_1 & \text{if $\nu_2>r_1$}, \\
            -2\nu_2+\nu_1+\nu_4 & \text{if $\nu_2\leq r_1$}.
        \end{cases}\\
        & \varphi_3 (\nu;r_1, r_2) =
        \begin{cases}
            -\nu_3+\nu_1+\nu_4-r_1 & \text{if $\nu_3>r_1$}, \\
            -2\nu_3+\nu_1+\nu_4 & \text{if $\nu_3\leq r_1$}.
        \end{cases}\\
        & \varphi_4 (\nu;r_1, r_2) = -\nu_4+\nu_2+\nu_3-r_2
        \end{align*}
\end{comment}
        \item
        \begin{align*}
        \varepsilon_1 (\nu;r_1, r_2) &= \nu_1. &
        \varphi_1 (\nu;r_1, r_2) &= -\nu_1+\nu_2+\nu_3. \\
        \varepsilon_2 (\nu;r_1, r_2) &=
        \begin{cases}
            \nu_2-r_1& \text{if $\nu_2>r_1$}, \\
            0 & \text{if $\nu_2\leq r_1$}.
        \end{cases} &
        \varphi_2 (\nu;r_1, r_2) &=
        \begin{cases}
            -\nu_2+\nu_1+\nu_4-r_1 & \text{if $\nu_2>r_1$}, \\
            -2\nu_2+\nu_1+\nu_4 & \text{if $\nu_2\leq r_1$}.
        \end{cases}\\
        \varepsilon_3 (\nu;r_1, r_2) &=
        \begin{cases}
            \nu_3-r_1 & \text{if $\nu_3>r_1$}, \\
            0 & \text{if $\nu_3\leq r_1$}.
        \end{cases}&
        \varphi_3 (\nu;r_1, r_2) &=
        \begin{cases}
            -\nu_3+\nu_1+\nu_4-r_1 & \text{if $\nu_3>r_1$}, \\
            -2\nu_3+\nu_1+\nu_4 & \text{if $\nu_3\leq r_1$}.
        \end{cases}\\
        \varepsilon_4 (\nu;r_1, r_2) &= \nu_4-r_2. &
        \varphi_4 (\nu;r_1, r_2) &= -\nu_4+\nu_2+\nu_3-r_2.
        \end{align*}

\begin{comment}
        \item 
        \begin{align*}
        & \varepsilon'_1 (\nu;r_1, r_2) = 
        \begin{cases}
            \nu_1 & \text{if $\nu_4=r_2$}, \\
            \nu_1-r_1 & \text{if $\nu_4>r_2$}.
        \end{cases}\\
        & \varepsilon'_2 (\nu;r_1, r_2) =
        \begin{cases}
            \nu_2-r_1& \text{if $\nu_2>r_1$}, \\
            0 & \text{if $\nu_2\leq r_1$}.
        \end{cases}\\
        & \varepsilon'_3 (\nu;r_1, r_2) =
        \begin{cases}
            \nu_3-r_1 & \text{if $\nu_3>r_1$}, \\
            0 & \text{if $\nu_3\leq r_1$}.
        \end{cases}\\
        & \varepsilon'_4 (\nu;r_1, r_2) =   \nu_4-r_2.
        \end{align*}

        \item 
        \begin{align*}
        & \varphi'_1 (\nu;r_1, r_2) = \infty\\
        & \varphi'_2 (\nu;r_1, r_2) =
        \begin{cases}
            \infty& \text{if $\nu_2\geq r_1$}, \\
            0 & \text{if $\nu_2< r_1$}.
        \end{cases}\\
        & \varphi'_3 (\nu;r_1, r_2) =
        \begin{cases}
            \infty & \text{if $\nu_3\geq r_1$}, \\
            0 & \text{if $\nu_3< r_1$}.
        \end{cases}\\
        & \varphi'_4 (\nu;r_1, r_2) = 
        \begin{cases}
            \infty& \text{if $\nu_1+\nu_4 \geq \nu_2+\nu_3$}, \\
            0 & \text{if $\nu_1+\nu_4 < \nu_2+\nu_3$}.
        \end{cases}
        \end{align*}
\end{comment}
        \item
        \begin{align*}
        \varepsilon'_1 (\nu;r_1, r_2) &= 
        \begin{cases}
            \nu_1 & \text{if $\nu_4=r_2$}, \\
            \nu_1-r_1 & \text{if $\nu_4>r_2$}.
        \end{cases}&
        \varphi'_1 (\nu;r_1, r_2) &= \infty. \\
        \varepsilon'_2 (\nu;r_1, r_2) &=
        \begin{cases}
            \nu_2-r_1& \text{if $\nu_2>r_1$}, \\
            0 & \text{if $\nu_2\leq r_1$}.
        \end{cases}&
        \varphi'_2 (\nu;r_1, r_2) &=
        \begin{cases}
            \infty& \text{if $\nu_2\geq r_1$}, \\
            0 & \text{if $\nu_2< r_1$}.
        \end{cases}\\
        \varepsilon'_3 (\nu;r_1, r_2) &=
        \begin{cases}
            \nu_3-r_1 & \text{if $\nu_3>r_1$}, \\
            0 & \text{if $\nu_3\leq r_1$}.
        \end{cases}&
        \varphi'_3 (\nu;r_1, r_2) &=
        \begin{cases}
            \infty & \text{if $\nu_3\geq r_1$}, \\
            0 & \text{if $\nu_3< r_1$}.
        \end{cases}\\
        \varepsilon'_4 (\nu;r_1, r_2) &=   \nu_4-r_2. &
        \varphi'_4 (\nu;r_1, r_2) &= 
        \begin{cases}
            \infty& \text{if $\nu_1+\nu_4 \geq \nu_2+\nu_3$}, \\
            0 & \text{if $\nu_1+\nu_4 < \nu_2+\nu_3$}.
        \end{cases}
        \end{align*}
    \end{enumerate}
\end{prop}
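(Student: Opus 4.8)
I would treat the four families of quantities in three groups, according to how they are defined. The values $\varepsilon_i(\nu;r_1,r_2)$ come directly from Definition \ref{defn:epsilonphi} by evaluating at a generic point of the component; the values $\varphi_i(\nu;r_1,r_2)$ then follow for free from the crystal identity $\varphi_i = \varepsilon_i + \langle h_i,\weight\rangle$ together with the $\tilde A_3$ Cartan matrix of $\overrightarrow{G}_{(2,2)}$; and the primed quantities $\varepsilon_i',\varphi_i'$ are read off by iterating the explicit operators of Theorem \ref{thm:2x2ef} and Definition \ref{def:epsilonprime}.

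\textbf{Step 1: the $\varepsilon_i$.} Since $\varepsilon_i(\Lambda)=\min_{f\in\Lambda}\varepsilon_i(f)$ is the generic value of $\varepsilon_i$ on the irreducible component, I would evaluate it at a general representation of $(\nu;r_1,r_2)$, for which the explicit general module derived in the Appendix may be used. Recall $\varepsilon_i(f)=\nu_i-\rank\bigl((f_\tau)_{t(\tau)=i}\bigr)$. At vertex $1$ there are no incoming arrows, so $\varepsilon_1(f)=\nu_1$ identically and $\varepsilon_1(\Lambda)=\nu_1$. At vertex $4$ the incoming map is $\begin{bmatrix} f_{24} & f_{34}\end{bmatrix}$, whose rank equals $\rank\begin{bmatrix} f_{24} & -f_{34}\end{bmatrix}=r_2$ at a generic point, giving $\varepsilon_4(\Lambda)=\nu_4-r_2$. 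At vertex $2$ the incoming map is $f_{12}$, and the task is to find the maximal value of $\rank f_{12}$ over the component. Writing $\alpha_1=\begin{bmatrix} f_{12} & f_{13}\end{bmatrix}^T$ and $\alpha_2=\begin{bmatrix} f_{24} & -f_{34}\end{bmatrix}$ as in Proposition \ref{prop:irr2x2}, the map $f_{12}$ is $\alpha_1$ composed with the projection $\Bbbk^{\nu_2+\nu_3}\to\Bbbk^{\nu_2}$, and the relation $\alpha_2\alpha_1=0$ forces $\image\alpha_1\subseteq\kernel\alpha_2$. Choosing $\alpha_2$ of rank $r_2$ whose kernel projects onto a subspace of $\Bbbk^{\nu_2}$ of dimension $\min(r_1,\nu_2)$, and $\alpha_1$ of rank $r_1$ with image in that kernel, the generic rank of $f_{12}$ is $\min(r_1,\nu_2)$, so $\varepsilon_2(\Lambda)=\nu_2-\min(r_1,\nu_2)=\max(\nu_2-r_1,0)$. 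The computation of $\varepsilon_3$ is identical after exchanging vertices $2$ and $3$.

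\textbf{Step 2: the $\varphi_i$.} The generalized Cartan matrix of $\overrightarrow{G}_{(2,2)}$ is the $\tilde A_3$ matrix with $A_{12}=A_{13}=A_{24}=A_{34}=-1$ and $A_{14}=A_{23}=0$, so $\langle h_i,\weight(\Lambda)\rangle=-\sum_j A_{ij}\nu_j$. Substituting the values of $\varepsilon_i$ from Step 1 into $\varphi_i=\varepsilon_i+\langle h_i,\weight\rangle$ yields the stated formulas by direct arithmetic; for instance $\varphi_1=\nu_1+(-2\nu_1+\nu_2+\nu_3)=-\nu_1+\nu_2+\nu_3$.

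\textbf{Step 3: the primed quantities.} By Definition \ref{def:epsilonprime} these count the maximal number of nonzero iterates of $\tilde e_i$ and $\tilde f_i$, so I would iterate the formulas of Theorem \ref{thm:2x2ef}. For $i\in\{2,3\}$ each application of $\tilde e_i$ lowers $\nu_i$ by one and, as one checks directly from the formulas (whether one starts in the region $\nu_1+\nu_4<\nu_2+\nu_3$ or $\geq$), keeps $r_1$ fixed along the orbit; the operator vanishes exactly when $\nu_i\le r_1$ is first reached, giving $\varepsilon_i'=\nu_i-r_1$ when $\nu_i>r_1$ and $0$ otherwise. For $i=4$ the operator lowers $\nu_4$ with $r_2$ fixed and vanishes at $\nu_4=r_2$, giving $\varepsilon_4'=\nu_4-r_2$. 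The only delicate case is $\tilde e_1$, whose behaviour changes across the wall $\nu_1+\nu_4=\nu_2+\nu_3$: when $\nu_4>r_2$ the orbit stays in $\nu_1+\nu_4>\nu_2+\nu_3$ with $r_1$ fixed and halts at $\nu_1=r_1$ after $\nu_1-r_1$ steps, whereas when $\nu_4=r_2$ the orbit eventually crosses the wall and thereafter lowers $r_1$ together with $\nu_1$, continuing until $\nu_1=0$ and yielding $\varepsilon_1'=\nu_1$. Dually, each $\tilde f_i$ raises $\nu_i$ and never vanishes precisely when the vanishing inequality is never met along the increasing orbit, producing $\varphi_i'=\infty$ in those cases and $\varphi_i'=0$ otherwise.

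\textbf{Main obstacle.} The genuinely geometric point is Step 1: determining the generic partial ranks $\rank f_{12}$ and $\rank f_{34}$ on the double determinantal variety $(\nu;r_1,r_2)$, since the coupling $\alpha_2\alpha_1=0$ means these ranks are not independent of the rank conditions $\rank\alpha_1\le r_1$, $\rank\alpha_2\le r_2$ that cut out the component. The explicit general module from the Appendix resolves this cleanly. By comparison, the iterations in Step 3 are routine bookkeeping, the one subtlety being the wall-crossing in the $\tilde e_1$ orbit, which must be tracked carefully.
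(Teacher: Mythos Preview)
Your three-step plan matches the paper's proof exactly: compute $\varepsilon_i$ from the definition, obtain $\varphi_i$ via the crystal identity, and read off the primed quantities by iterating Theorem \ref{thm:2x2ef}. Steps 2 and 3 are essentially identical to the paper, including the wall-crossing analysis for $\varepsilon'_1$.

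The one place where the paper is noticeably cleaner is Step 1 for $\varepsilon_2,\varepsilon_3,\varepsilon_4$. You work at a generic point of the component (invoking the Appendix, or constructing $\alpha_1,\alpha_2$ with both rank conditions saturated and then controlling the projection of $\image\alpha_1\subset\ker\alpha_2$ to $\Bbbk^{\nu_2}$). The paper instead exploits that $(\nu;r_1,r_2)$ is defined by the \emph{closed} conditions $\rank\alpha_1\le r_1$, $\rank\alpha_2\le r_2$, so it contains highly degenerate points: for $\varepsilon_2$ one simply takes $f_{13}=f_{24}=f_{34}=0$ and $f_{12}$ of rank $\min(r_1,\nu_2)$, which visibly lies in $(\nu;r_1,r_2)$ and realises the bound; for $\varepsilon_4$ one takes $f_{12}=f_{13}=0$ and $\begin{bmatrix}f_{24}&-f_{34}\end{bmatrix}$ of rank $r_2$. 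This sidesteps the feasibility check your construction needs (e.g.\ whether $\ker\alpha_2$ can be arranged to project onto a subspace of $\Bbbk^{\nu_2}$ of the right dimension while keeping $\rank\alpha_2=r_2$), and makes the Appendix unnecessary here.
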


\begin{proof}
    (1) 
    Since there are no arrows ending at the vertex 1, we have $\varepsilon_1(\nu;r_1, r_2)= \nu_1$. 
    
    For $f \in (\nu;r_1, r_2)$, we have $\rank f_{12} \leq r_1$ by definition. Since the codomain of $f_{12}$ is $\Bbbk^{\nu_2}$, we have $\rank f_{12} \leq \nu_2$. 
    By taking $f_{12}$ with $\rank f_{12} = \min\{r_1, \nu_2\}$,  $f_{13}=0$, $f_{24}=0$ and $f_{34}=0$,  we have $f \in (\nu;r_1, r_2)$. This implies the equality $\varepsilon_2(\nu;r_1, r_2)= \nu_2-\min\{r_1, \nu_2\}$. The statement for $\varepsilon_3(\nu;r_1, r_2)$ is proved in the same way. 
    
    For $f \in (\nu;r_1, r_2)$, we have $\rank \begin{bmatrix} f_{24} & -f_{34} \end{bmatrix} \leq r_2$. Since  $r_2 \leq \nu_2+\nu_3$ and $r_2 \leq \nu_4$ hold, we can take $f_{24}$, $f_{34}$ with $\rank \begin{bmatrix} f_{24} & -f_{34} \end{bmatrix} = r_2$. By setting $f_{12}=0$ and $f_{13}=0$,  this $f$ belongs to $(\nu;r_1, r_2)$. Hence we have $\varepsilon_4(\nu;r_1, r_2)= \nu_4-r_2$.
    
    The statemants for $\varphi_i$ follow from what we have proved and equalities $\langle h_1, \nu \rangle =-2\nu_1 +\nu_2 + \nu_3$, $\langle h_2, \nu \rangle =-2\nu_2 +\nu_1 + \nu_4$, $\langle h_3, \nu \rangle =-2\nu_3 +\nu_1 + \nu_4$ and  $\langle h_4, \nu \rangle =-2\nu_4 +\nu_2 + \nu_3$. 

    (2) Assume $\nu_1+\nu_4 \leq \nu_2+\nu_3$. In this case, we have $\nu_1=r_1$ and $\nu_4=r_2$ , hence $\tilde e^{\nu_1}_1(\nu;r_1, r_2)= (0, \nu_2, \nu_3, \nu_4; 0, r_2)$ and $\tilde e^{\nu_1+1}_1(\nu;r_1, r_2)=0$ by Theorem \ref{thm:2x2ef}. This implies the equality $\varepsilon'_1(\nu;r_1, r_2)= \nu_1$. Assume $\nu_1+\nu_4 > \nu_2+\nu_3$. 
    In this case, we have $\tilde e^{\nu_1-r_1}_1(\nu;r_1, r_2)= (r_1, \nu_2, \nu_3, \nu_4; r_1, r_2)$. 
    If $\nu_4=r_2$, we have $r_1+\nu_4=\nu_2+\nu_3$ and we can further apply $\tilde e_1$ and obtain $\tilde e^{\nu_1}_1(\nu;r_1, r_2)= (0, \nu_2, \nu_3, \nu_4; 0, r_2)$ and $\tilde e^{\nu_1+1}_1(\nu;r_1, r_2)=0$. This implies the equality $\varepsilon'_1(\nu;r_1, r_2)= \nu_1$. If $\nu_4>r_2$, then we have $r_1+\nu_4>r_1+r_2 = \nu_2+\nu_3$ and therefore $\tilde e_1^{\nu_1-r_1+1}(r_1, \nu_2, \nu_3, \nu_4; r_1, r_2)=0$. This implies the equality $\varepsilon'_1(\nu;r_1, r_2)= \nu_1-r_1$. 

    By Theorem \ref{thm:2x2ef}, we have $\tilde e_2(\nu; r_1, r_2)\neq 0$ if and only if we have $\nu_2>r_1$. This immediately implies the statement for $\varepsilon'_2(\nu;r_1, r_2)$. The statement for $\varepsilon'_3(\nu;r_1, r_2)$ is proved in the same way. Theorem \ref{thm:2x2ef} directly implies the statement for $\varepsilon'_4(\nu;r_1, r_2)$. 

    By Theorem \ref{thm:2x2ef}, we always have $\tilde f_1(\nu;r_1, r_2) \neq 0$ and hence we have the equality $\varphi'_1(\nu;r_1, r_2)= \infty$. The condition $\tilde f_2(\nu; r_1, r_2)\neq 0$ is equivalent to the condition $\nu_2\geq r_1$. This implies the statement for $\varphi'_2(\nu;r_1, r_2)$. The statement for $\varphi'_3(\nu;r_1, r_2)$ is proved in the same way. The statement for $\varphi'_4(\nu;r_1, r_2)$ follows immediately from Theorem \ref{thm:2x2ef}. 
\end{proof}

The following proposition is proved in the same way as the previous one. 

\begin{prop}\label{prop:varepsilonstar2x2}
The values of the functions $\varepsilon_i^*{}, \varphi_i^*{}, \varepsilon_i^*{}', \varphi_i^*{}'$ are computed as follows. 
    \begin{enumerate}
\begin{comment}
        \item 
        \begin{align*}
        & \varepsilon^*_1 (\nu;r_1, r_2) = \nu_1-r_1 \\
        & \varepsilon^*_2 (\nu;r_1, r_2) =
        \begin{cases}
            \nu_2-r_2& \text{if $\nu_2>r_2$}, \\
            0 & \text{if $\nu_2\geq r_2$}.
        \end{cases}\\
        & \varepsilon^*_3 (\nu;r_1, r_2) =
        \begin{cases}
            \nu_3-r_2 & \text{if $\nu_3>r_2$}, \\
            0 & \text{if $\nu_3\geq r_2$}.
        \end{cases}\\
        & \varepsilon^*_4 (\nu;r_1, r_2) =\nu_4
        \end{align*}
        
        \item
        \begin{align*}
        & \varphi^*_1 (\nu;r_1, r_2) =  -\nu_1+\nu_2+\nu_3-r_1 \\
        & \varphi^*_2 (\nu;r_1, r_2) =
        \begin{cases}
            -\nu_2+\nu_1+\nu_4-r_2 & \text{if $\nu_2>r_2$}, \\
            -2\nu_2+\nu_1+\nu_4 & \text{if $\nu_2\leq r_2$}.
        \end{cases}\\
        & \varphi^*_3 (\nu;r_1, r_2) =
        \begin{cases}
            -\nu_3+\nu_1+\nu_4-r_2 & \text{if $\nu_3>r_2$}, \\
            -2\nu_3+\nu_1+\nu_4 & \text{if $\nu_3\leq r_2$}.
        \end{cases}\\
        & \varphi^*_4 (\nu;r_1, r_2) = -\nu_4+\nu_2+\nu_3
        \end{align*}
\end{comment}
        \item
        \begin{align*}
        \varepsilon^*_1 (\nu;r_1, r_2) &= \nu_1-r_1. &
        \varphi^*_1 (\nu;r_1, r_2) &=  -\nu_1+\nu_2+\nu_3-r_1. \\
        \varepsilon^*_2 (\nu;r_1, r_2) &=
        \begin{cases}
            \nu_2-r_2& \text{if $\nu_2>r_2$}, \\
            0 & \text{if $\nu_2\geq r_2$}.
        \end{cases}&
        \varphi^*_2 (\nu;r_1, r_2) &=
        \begin{cases}
            -\nu_2+\nu_1+\nu_4-r_2 & \text{if $\nu_2>r_2$}, \\
            -2\nu_2+\nu_1+\nu_4 & \text{if $\nu_2\leq r_2$}.
        \end{cases}\\
        \varepsilon^*_3 (\nu;r_1, r_2) &=
        \begin{cases}
            \nu_3-r_2 & \text{if $\nu_3>r_2$}, \\
            0 & \text{if $\nu_3\geq r_2$}.
        \end{cases}&
        \varphi^*_3 (\nu;r_1, r_2) &=
        \begin{cases}
            -\nu_3+\nu_1+\nu_4-r_2 & \text{if $\nu_3>r_2$}, \\
            -2\nu_3+\nu_1+\nu_4 & \text{if $\nu_3\leq r_2$}.
        \end{cases}\\
        \varepsilon^*_4 (\nu;r_1, r_2) &=\nu_4. &
        \varphi^*_4 (\nu;r_1, r_2) &= -\nu_4+\nu_2+\nu_3. 
        \end{align*}

\begin{comment}
        \item 
        \begin{align*}
        & \varepsilon_1^*{}' (\nu;r_1, r_2) =  \nu_1-r_1\\
        & \varepsilon_2^*{}' (\nu;r_1, r_2) =
        \begin{cases}
            \nu_2-r_1& \text{if $\nu_2>r_2$}, \\
            0 & \text{if $\nu_2\leq r_2$}.
        \end{cases}\\
        & \varepsilon_3^*{}' (\nu;r_1, r_2) =
        \begin{cases}
            \nu_3-r_1 & \text{if $\nu_3>r_2$}, \\
            0 & \text{if $\nu_3\leq r_2$}.
        \end{cases}\\
        & \varepsilon_4^*{}' (\nu;r_1, r_2) = 
        \begin{cases}
            \nu_4-r_2& \text{if $\nu_1>r_1$}, \\
            \nu_4 & \text{if $\nu_1=r_1$}.
        \end{cases}
        \end{align*}

        \item 
        \begin{align*}
        & \varphi_1^*{}' (\nu;r_1, r_2) = 
        \begin{cases}
            \infty& \text{if $\nu_1+\nu_4 \geq \nu_2+\nu_3$}, \\
            0 & \text{if $\nu_1+\nu_4 < \nu_2+\nu_3$}.
        \end{cases}\\
        & \varphi_2^*{}' (\nu;r_1, r_2) =
        \begin{cases}
            \infty& \text{if $\nu_2\geq r_2$}, \\
            0 & \text{if $\nu_2< r_2$}.
        \end{cases}\\
        & \varphi_3^*{}' (\nu;r_1, r_2) =
        \begin{cases}
            \infty & \text{if $\nu_3\geq r_2$}, \\
            0 & \text{if $\nu_3< r_2$}.
        \end{cases}\\
        & \varphi_4^*{}' (\nu;r_1, r_2) = \infty
        \end{align*}
\end{comment}

        \item
        \begin{align*}
        \varepsilon_1^*{}' (\nu;r_1, r_2) &=\nu_1-r_1. &
        \varphi_1^*{}' (\nu;r_1, r_2) &= 
        \begin{cases}
            \infty& \text{if $\nu_1+\nu_4 \geq \nu_2+\nu_3$}, \\
            0 & \text{if $\nu_1+\nu_4 < \nu_2+\nu_3$}.
        \end{cases}\\
        \varepsilon_2^*{}' (\nu;r_1, r_2) &=
        \begin{cases}
            \nu_2-r_1& \text{if $\nu_2>r_2$}, \\
            0 & \text{if $\nu_2\leq r_2$}.
        \end{cases}&
        \varphi_2^*{}' (\nu;r_1, r_2) &=
        \begin{cases}
            \infty& \text{if $\nu_2\geq r_2$}, \\
            0 & \text{if $\nu_2< r_2$}.
        \end{cases}\\
        \varepsilon_3^*{}' (\nu;r_1, r_2) &=
        \begin{cases}
            \nu_3-r_1 & \text{if $\nu_3>r_2$}, \\
            0 & \text{if $\nu_3\leq r_2$}.
        \end{cases}&
        \varphi_3^*{}' (\nu;r_1, r_2) &=
        \begin{cases}
            \infty & \text{if $\nu_3\geq r_2$}, \\
            0 & \text{if $\nu_3< r_2$}.
        \end{cases}\\
        \varepsilon_4^*{}' (\nu;r_1, r_2) &= 
        \begin{cases}
            \nu_4-r_2& \text{if $\nu_1>r_1$}, \\
            \nu_4 & \text{if $\nu_1=r_1$}.
        \end{cases} &
        \varphi_4^*{}' (\nu;r_1, r_2) &= \infty. 
        \end{align*}
    \end{enumerate}
\end{prop}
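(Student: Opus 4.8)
The plan is to follow the proof of Proposition \ref{prop:varepsilon2x2} essentially verbatim, using that $\varepsilon_i^*$ is built from the \emph{kernel} of the outgoing maps at the vertex $i$ rather than the cokernel of the incoming maps; combinatorially this exchanges the roles of source and target, which in the $2\times2$ case amounts to applying the vertex involution $a\colon 1\leftrightarrow 4,\ 2\leftrightarrow 3$ and swapping the two rank parameters $r_1\leftrightarrow r_2$. First I would compute each $\varepsilon_i^*(\nu;r_1,r_2)$ directly from its definition as a generic kernel dimension. At vertex $1$ the two outgoing arrows assemble into $\begin{bmatrix} f_{12}& f_{13}\end{bmatrix}^T\colon\Bbbk^{\nu_1}\to\Bbbk^{\nu_2}\oplus\Bbbk^{\nu_3}$, whose generic rank on $(\nu;r_1,r_2)$ equals $r_1$, giving $\varepsilon_1^*=\nu_1-r_1$. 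At vertex $4$ there are no outgoing arrows, so the relevant map is empty and $\varepsilon_4^*=\nu_4$. At vertices $2$ and $3$ there is a single outgoing arrow, namely $f_{24}$ (resp.\ $f_{34}$); the defining inequality $\rank\begin{bmatrix}f_{24}&-f_{34}\end{bmatrix}\le r_2$ bounds its generic rank by $\min\{r_2,\nu_2\}$ (resp.\ $\min\{r_2,\nu_3\}$), and a representative attaining this bound is produced by setting the other three maps to $0$, exactly as in the previous proof. This yields the stated dichotomy $\nu_2>r_2$ versus $\nu_2\le r_2$.

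Then I would obtain each $\varphi_i^*$ from the crystal identity $\varphi_i^*(\Lambda)=\varepsilon_i^*(\Lambda)+\langle h_i,\weight(\Lambda)\rangle$, substituting the same Cartan pairings $\langle h_1,\nu\rangle=-2\nu_1+\nu_2+\nu_3$, $\langle h_2,\nu\rangle=-2\nu_2+\nu_1+\nu_4$, $\langle h_3,\nu\rangle=-2\nu_3+\nu_1+\nu_4$, and $\langle h_4,\nu\rangle=-2\nu_4+\nu_2+\nu_3$ already used in Proposition \ref{prop:varepsilon2x2}.

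For the primed invariants $\varepsilon_i^*{}'$ and $\varphi_i^*{}'$, the plan is to read off the explicit formulas for $\tilde e_i^*$ and $\tilde f_i^*$ from Theorem \ref{thm:2x2efstar} and count how many iterations keep the output nonzero, tracking the trajectory of $(\nu;r_1,r_2)$ and noting which branch of the piecewise operator is active at each step. The quantities $\varepsilon_2^*{}'$ and $\varepsilon_3^*{}'$ follow immediately, since $\tilde e_i^*(\nu;r_1,r_2)\ne 0$ exactly when $\nu_i>r_2$ ($i=2,3$), and the nonvanishing conditions for $\tilde f_i^*$ read off the $\varphi_i^*{}'$ directly.

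The main obstacle is the iterated analysis for $\varepsilon_1^*{}'$ and $\varepsilon_4^*{}'$, where the sign of $\nu_1+\nu_4-(\nu_2+\nu_3)$ governing the active branch of the $\ast$-operator can flip as the weight is shifted by successive $\alpha_i$; one must verify that the stated threshold—in particular the dichotomy $\nu_1=r_1$ versus $\nu_1>r_1$ appearing in $\varepsilon_4^*{}'$—is precisely where the chain of $\ast$-operators terminates, just as in the treatment of $\varepsilon_1'$ in the preceding proof. As a cross-check, and indeed as an independent route to the entire proposition, I note that the commutative diagram established at the end of Section \ref{sect:constBC}, whose vertical arrows $a^*\circ D$ are isomorphisms intertwining the pair $(\varpi_1,\varpi_2)$ with $(\varpi_3,\varpi_4)$, carries $(\nu;r_1,r_2)$ to $(a^*\nu;r_2,r_1)$ and transports $\varepsilon_i^*$ to $\varepsilon_{a(i)}$; hence every formula above can alternatively be deduced verbatim from Proposition \ref{prop:varepsilon2x2} under $i\mapsto a(i)$ together with the swap $(r_1,r_2)\mapsto(r_2,r_1)$.
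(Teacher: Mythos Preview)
Your proposal is correct and matches the paper's approach exactly. The paper's proof consists of the single sentence ``proved in the same way as the previous one,'' together with the remark at the end of Section~\ref{sect:constBC} that the duality isomorphism $a^*\circ D$ deduces Proposition~\ref{prop:varepsilonstar2x2} from Proposition~\ref{prop:varepsilon2x2}; you have outlined both of these routes---the direct kernel computation mirroring the cokernel argument, and the transport via $a\colon 1\leftrightarrow 4,\ 2\leftrightarrow 3$ with $(r_1,r_2)\mapsto(r_2,r_1)$---so there is nothing further to add.
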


Finally, we prove that $B_C$ cannot be embedded into $B({\infty})$, as well as showing some fundamental properties about $B_C$. 
We recall that the crystal is called \emph{upper seminormal} if 
$\varepsilon_i(\Lambda) = \varepsilon_i'(\Lambda)$
holds for any $i$ and $\Lambda$.

\begin{cor}\label{cor:BC2x2property}\hfill
\begin{enumerate}
    \item For $\overrightarrow{G}_{m}$ with $m=(m_1,\dots,m_d)$, there does not exist a morphism of crystals $B_C \to B(\infty)$. 
    \item For $\overrightarrow{G}_{m}$ with $m=(m_1,\dots,m_d)$, the crystal $B_C$ is not upper seminormal. 
    \item For $\overrightarrow{G}_{(2,2)}$, the crystal $B_C$ is connected. 
\end{enumerate}
\end{cor}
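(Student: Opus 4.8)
The plan is to treat the three parts in the order (3), (2), (1), since (1) will follow from (2) together with the standard fact that $B(\infty)$ is upper seminormal. For (3) I would show that the crystal graph of $\overrightarrow{G}_{(2,2)}$ has a \emph{unique} highest weight vertex, namely the zero module $\Lambda_0=(0;0,0)$, and then connect every component to it by an induction on the total dimension $|\nu|=\sum_j\nu_j$. Concretely, suppose $\Lambda=(\nu;r_1,r_2)$ satisfies $\tilde e_i\Lambda=0$ for all $i\in\{1,2,3,4\}$. If $\nu_1+\nu_4>\nu_2+\nu_3$, then Theorem \ref{thm:2x2ef} forces $\nu_1=r_1$ (from $\tilde e_1\Lambda=0$) and $\nu_4=r_2$ (from $\tilde e_4\Lambda=0$), whence $r_1+r_2=\nu_1+\nu_4>\nu_2+\nu_3=r_1+r_2$, a contradiction. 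Hence $\nu_1+\nu_4\le\nu_2+\nu_3$, and Corollary \ref{cor:ex_irr} gives $r_1=\nu_1$, $r_2=\nu_4$. In this regime $\tilde e_2\Lambda=\tilde e_3\Lambda=0$ give $\nu_2,\nu_3\le r_1=\nu_1$, while $\tilde e_1\Lambda=(\nu+\alpha_1;r_1-1,r_2)$ vanishes only if $\nu_1=r_1=0$; then $\nu_2=\nu_3=0$ and $\nu_4\le\nu_2+\nu_3=0$, so $\Lambda=\Lambda_0$. Since $\tilde e_i\Lambda\neq 0$ strictly decreases $|\nu|$ and produces a crystal edge (as $\tilde e_i\Lambda=\Lambda'$ is equivalent to $\tilde f_i\Lambda'=\Lambda$ by Lemma \ref{prop:ef}), the induction connects every vertex to $\Lambda_0$, proving connectedness.

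For (2) I would first exhibit a counterexample for $\overrightarrow{G}_{(2,2)}$ and then transport it to a general grid. Take $\nu=(1,1,0,1)$ and $\Lambda=(\nu;1,0)$, which lies in $B_C(\nu)$ by Corollary \ref{cor:ex_irr} since $\nu_1+\nu_4=2\ge\nu_2+\nu_3=1$ and $r_1+r_2=1$. Then $\varepsilon_1(\Lambda)=\nu_1=1$ by Proposition \ref{prop:varepsilon2x2}, whereas $\nu_1+\nu_4>\nu_2+\nu_3$ with $\nu_1=r_1$ makes $\tilde e_1\Lambda=0$ by Theorem \ref{thm:2x2ef}, so $\varepsilon_1'(\Lambda)=0$; thus $\varepsilon_1(\Lambda)\neq\varepsilon_1'(\Lambda)$ and $B_C$ is not upper seminormal in the $2\times 2$ case. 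For $m=(m_1,\dots,m_d)$ with at least two entries exceeding $1$ (otherwise $\overrightarrow{G}_m$ is of type $A$ and is upper seminormal by Theorem \ref{thm:ef_an}), I would pick two such directions, place the above dimension vector on the corresponding $2\times 2$ corner with global source $v$, and set all remaining coordinates of $\nu$ to $0$. All linear maps incident to vertices outside the corner are then forced to vanish and the commutativity relations outside the square hold automatically, so $E_C(\nu)$ and $\varepsilon_v$ agree with the $2\times 2$ computation; moreover $\Exttilde_v$, and hence the operator $\tilde e_v$ built from $\varpi_1,\varpi_2$, localizes to the corner because the extra out-neighbours of $v$ carry dimension $0$. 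This reproduces $\varepsilon_v(\Lambda)=1>0=\varepsilon_v'(\Lambda)$.

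For (1) I would invoke the standard property that $B(\infty)$ is upper seminormal, i.e. $\varepsilon_i(b)=\varepsilon_i'(b)$ for every $b$ and $i$ (\cite{Kashiwara}). Since Definition \ref{defn:crystalmor} already permits the zero map, the assertion is understood as the non-existence of a crystal \emph{embedding} (an injective strict morphism), in agreement with the introduction. Such an embedding $\rho\colon B_C\hookrightarrow B(\infty)$ preserves each $\varepsilon_i$ and, being strict and injective, satisfies $\tilde e_i^{\,k}\rho(\Lambda)=\rho(\tilde e_i^{\,k}\Lambda)$ and $\rho(x)\neq 0$ for all genuine $x$, whence $\tilde e_i^{\,k}\Lambda=0\iff\tilde e_i^{\,k}\rho(\Lambda)=0$ and therefore $\varepsilon_i'(\Lambda)=\varepsilon_i'(\rho(\Lambda))$. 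Combining with upper seminormality of the target,
\[
\varepsilon_i'(\Lambda)=\varepsilon_i'(\rho(\Lambda))=\varepsilon_i(\rho(\Lambda))=\varepsilon_i(\Lambda),
\]
so $B_C$ would be upper seminormal, contradicting (2). Hence no embedding exists.

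The main obstacle I anticipate is the general‑$m$ reduction used in (1) and (2): one must verify rigorously that the variety $E_C(\nu)$, the invariant $\varepsilon_v$, and above all the Kashiwara operator $\tilde e_v$ at the corner sink genuinely coincide with their $2\times 2$ counterparts when the module is supported on a $2\times 2$ corner. Because $\tilde e_v$ is defined through the global varieties $\varpi_1,\varpi_2$ and closures taken inside $E_C(\nu+\alpha_v)$, the delicate point is checking that the irreducibility and irreducible‑component conditions entering Definition \ref{defn:ef} are unaffected by the ambient larger grid; the dimension‑$0$ padding trivializes the incident maps and relations and collapses $\Exttilde_v$ onto the corner, but making this localization precise (rather than merely plausible) is where the real work lies. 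A secondary, purely bookkeeping point is to record the hypothesis that $\overrightarrow{G}_m$ is not of type $A$, since otherwise $B_C$ is upper seminormal and statements (1) and (2) cannot hold.
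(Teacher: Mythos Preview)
Your treatment of (3) and (2) is correct and close in spirit to the paper's, with minor differences. For (3) the paper simply writes down an explicit chain $\tilde e_4^{\,r_2}\tilde e_3^{\,\nu_3}\tilde e_2^{\,\nu_2}\tilde e_1^{\,\nu_1}\tilde e_4^{\,\nu_4-r_2}(\nu;r_1,r_2)=u_C$, whereas you argue by uniqueness of the highest weight element; both work. One small slip: in the boundary case $\nu_1+\nu_4=\nu_2+\nu_3$, Corollary~\ref{cor:ex_irr} alone does not force $r_1=\nu_1$, $r_2=\nu_4$ (there can be several components). You must first use $\tilde e_4\Lambda=0$ to get $\nu_4=r_2$, after which $r_1=\nu_2+\nu_3-r_2=\nu_1$ follows. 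For (2) the paper uses $((1,1,1,2);1,1)$ rather than your $((1,1,0,1);1,0)$, but the mechanism is identical.

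Part (1) is where your route genuinely diverges. The paper does \emph{not} deduce (1) from (2). Instead it exhibits a concrete relation
\[
\tilde f_3\,\tilde f_1^{\,2}\,\tilde f_3\,\tilde f_4^{\,2}\,u_C \;=\; \tilde f_1^{\,2}\,\tilde f_3^{\,2}\,\tilde f_4^{\,2}\,u_C
\]
in $B_C$ (both sides equal $((2,0,2,2);0,2)$) and checks, via the Nakashima--Zelevinsky polyhedral realization of $B(\infty)$, that the two corresponding $\tilde f$-words applied to $u_\infty$ give distinct elements. This obstructs any morphism that is nonzero on $u_C$ and on the finitely many intermediate vertices along those two $\tilde f$-paths, so in particular any morphism that does not kill $u_C$. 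Your argument through upper seminormality of $B(\infty)$ is more conceptual and avoids any computation inside $B(\infty)$, but, as you yourself flag, it needs the morphism to be \emph{strict}: under Definition~\ref{defn:crystalmor} alone, $\tilde e_1\Lambda=0$ does not force $\tilde e_1\rho(\Lambda)=0$, so the equality $\varepsilon_1'(\Lambda)=\varepsilon_1'(\rho(\Lambda))$ can fail for non-strict morphisms. Thus your (1) establishes only the non-existence of strict embeddings, which is weaker than what the paper's argument yields. Your observation that the type-$A$ case (at most one $m_i\ge 2$) must be excluded is correct and is a hypothesis the paper leaves implicit.
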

\begin{proof}
    For (1) and (2), we can reduce the problem in  $\overrightarrow{G}_{(2,2)}$. 
    
    (1) Let $u_C\in B_C(0)$ ($u_\infty\in B(\infty,0)$, respectively) be the unique irreducible component with the zero dimension vector. By Theorem \ref{thm:2x2ef}, we have 
    $\tilde f^{}_{3} \tilde f^{2}_{1} \tilde f^{}_{3} \tilde f^{2}_{4} u_C
    =
    \tilde f^{2}_{1} \tilde f^{2}_{3} \tilde f^{2}_{4} u_C
    $ in $B_C$. 
    On the other hand, using  polyhedral realizations of Nakashima and Zelevinsky \cite[\S 6]{NZ}, 
    we have 
    $\tilde f^{}_{3} \tilde f^{2}_{1} \tilde f^{}_{3} \tilde f^{2}_{4} u_{\infty}
    \neq
    \tilde f^{2}_{1} \tilde f^{2}_{3} \tilde f^{2}_{4} u_{\infty}
    $ in $B(\infty)$.
    This implies that there does not exist a crystal morphism $B_C \to B({\infty})$. 

    (2) For $\nu=-\alpha_1-\alpha_2-\alpha_3-2\alpha_4$ and $\Lambda=(\nu;1,1)$, we have
    $\varepsilon_1(\Lambda) = 1$ and $\varepsilon'_1(\Lambda) = 1$ by Proposition \ref{prop:varepsilon2x2}. 
    
    (3) By Theorem \ref{thm:2x2ef}, we have 
    $\tilde e_4^{r_2}
    \tilde e_3^{\nu_3}
    \tilde e_2^{\nu_2}
    \tilde e_1^{\nu_1}
    \tilde e_4^{\nu_4-r_2}
    (\nu;r_1, r_2) =
    u_C$. 
    This implies that the crystal is connected. 
\end{proof}

\section{Discussion}
Historically, in the original work of Kashiwara-Saito \cite{KS}, the crystal of the negative half of the quantum group associated with a quiver was geometrically constructed on $B(\infty)=\bigsqcup_\nu B(\infty,\nu)$ using the set $B(\infty,\nu)$ of irreducible components of the representation variety $\Lambda(\nu)$ ($\nu\in -{\rm Q}_+$). In this sense, quantum groups lie at the core of this problem. 
On the other hand, in this paper, we first established a geometric construction of the  crystal on the set $B_C=\bigsqcup_\nu B_C(\nu)$ of irreducible components of the representation variety $E_C(\nu)$ ($\nu\in -{\rm Q}_+$) for multiparameter persistence modules. 
Until now, we have not yet identified any relationships with quantum groups, except for Corollary \ref{cor:BC2x2property} (1), in our framework of multiparameter persistence modules. 
These relationships require thorough investigation to enhance our understanding of  algebraic structures underlying multiparameter persistence modules. 
As an initial direction of this subject, exploring the relationship 
between $\overrightarrow{G}_{(n)}$ and its preprojective version
or 
between $\overrightarrow{G}_{(2,2)}$ and the affine root system $\tilde A_3$ may yield valuable insights, given that the generalized Cartan matrices of them are identical.

\section*{Acknowledgment}
The authors are supported by the Japan Society for the Promotion of Science, Grant-in-Aid for Transformative Research Areas (A) (22A201,22H05107).

\printbibliography

\appendix
\section{Appendix}

The bound quiver $\overrightarrow{G}_{(2,2)}$ has eleven isomorphism classes of indecomposable representations. Let us denote 
by $M(\nu_1, \nu_2, \nu_3, \nu_4)$ the representation of $\overrightarrow{G}_{(2,2)}$ with the dimension vector 
$(\nu_1, \nu_2, \nu_3, \nu_4)$
consisting of 0 and 1, with morphisms to be identity if possible and zero otherwise. 
Then the eleven non-isomorphic indecomposable representations are listed as 
$M_1=M(1, 0, 0, 0)=\Mon$, 
$M_2=M(0, 1, 0, 0)=\Mtw$, 
$M_3=M(0, 0, 1, 0)=\Mth$, 
$M_4=M(0, 0, 0, 1)=\Mfo$, 
$M_5=M(1, 1, 0, 0)=\Mfi$, 
$M_6=M(1, 0, 1, 0)=\Msi$, 
$M_7=M(0, 1, 0, 1)=\Mse$, 
$M_8=M(0, 0, 1, 1)=\Mei$, 
$M_9=M(1, 1, 1, 0)=\Mni$, 
$M_{10}=M(0, 1, 1, 1)=\Mte$, 
$M_{11}=M(1, 1, 1, 1)=\Mel$, 
where  we introduced the graphical notation like $\Mte=M_{10}$ for visual clarity. Also, the Auslander-Reiten quiver for $\overrightarrow{G}_{(2,2)}$ is given by
\[
    \begin{tikzcd}
     & \Mse\arrow[dr] &  & \Mth\arrow[dr] &  & \Mfi\arrow[dr] & \\
    \Mfo\arrow[ur]\arrow[dr] &  & \Mte\arrow[ur]\arrow[dr]\arrow[r]& \Mel\arrow[r] & \Mni\arrow[ur]\arrow[dr] &  & \Mon\\
     & \Mei\arrow[ur] &  & \Mtw\arrow[ur] &  & \Msi\arrow[ur] & 
    \end{tikzcd}.
\]

Since the bound quiver $\overrightarrow{G}_{(2,2)}$ is of finite representation type, for each irreducible component of the representation variety, there is an open subset consisting of one isomorphism class of representations.
In this section, we determine this general representation for all irreducible components. 

The main technical fact of this appendix is the following result by Crawley-Boevey and Schröer \cite{CBS}. Let A be a $\Bbbk$-algebra with orthogonal idempotents $e_1, \ldots, e_\ell$ satisfying $e_1+\cdots+e_\ell=1$. 
For a dimension vector ${\bf d}=(\nu_1, \dots, \nu_\ell) \in \mathbb{N}^{\ell}$, define $\GL_{\bf d}(\Bbbk) := \prod_i \GL_{\nu_i}(\Bbbk)$ and $\textrm{mod}^{\bf d}_{A}(\Bbbk)$ to be the set of $\Bbbk$-algebra homomorphisms $A \to \textrm{End}_{\Bbbk}(\Bbbk^{\nu_1+\cdots+\nu_\ell})$ sending each $e_i$ to the matrix whose diagonal $\nu_i \times \nu_i$ block is the identity with zeros for all other blocks. 
In our setting, when we set $A$ to be the path algebra defined by ${\overrightarrow{G}_{m}}$, we have $\ell=|I|$ and $\textrm{mod}^{\bf d}_{A}(\Bbbk)=E_C(\nu)$ for $\nu=-\sum_{i\in I} \nu_i\alpha_i\in -{\rm Q}_+$. 
For dimension vectors ${\bf d}^j\in \mathbb{N}^{\ell}$ ($j=1,\dots,t$) and algebraic sets $C_j \subset \textrm{mod}^{{\bf d}^j}_{A}(\Bbbk)$, let us define $C_1 \oplus \cdots \oplus C_t \subset \textrm{mod}^{{\bf d}^1 + \dots + {\bf d}^t}_{A}(\Bbbk)$ to be the image of the map $\GL_{{\bf d}^1 + \dots + {\bf d}^t}(\Bbbk) \times C_1 \times \cdots \times C_t \to \textrm{mod}^{{\bf d}^1 + \dots + {\bf d}^t}_{A}(\Bbbk)$ given by the direct sum and conjugation. 

\begin{thm}\cite[Theorem 1.1.]{CBS}\label{CBS1}
    If $C$ is an irreducible component in ${\rm mod}^{{\bf d}}_{A}(\Bbbk)$, then 
    $C=\overline{C_1 \oplus \cdots \oplus C_t}$ for some irreducible components $C_j$ of ${\rm mod}^{{\bf d}^j}_{A}(\Bbbk)$, with the property that the general representation in each $C_j$ is indecomposable. Moreover $C_1,\dots,C_t$ are uniquely determined by this up to reordering. 
\end{thm}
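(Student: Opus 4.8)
The statement is the Crawley--Boevey--Schr\"oer canonical decomposition of an irreducible component into indecomposable ones, so the plan is to reconstruct the argument from generic decomposition and obstruction theory rather than merely to cite it. First I would record the two semicontinuity inputs on the affine variety $\operatorname{mod}^{\bf d}_A(\Bbbk)$: the function $M \mapsto \dim_\Bbbk \operatorname{End}_A(M)$ is upper semicontinuous, and the decomposition type of $M$ (the multiset of dimension vectors of its Krull--Schmidt summands) is a constructible invariant. Restricting to the irreducible component $C$ and intersecting the finitely many relevant constructible strata, irreducibility of $C$ forces a single generic stratum: there is a dense open $V \subseteq C$ and fixed vectors ${\bf d}^1, \dots, {\bf d}^t$ with $\sum_j {\bf d}^j = {\bf d}$ such that every $M \in V$ is isomorphic to $\bigoplus_{j=1}^t M_j$ with each $M_j$ indecomposable of dimension vector ${\bf d}^j$.

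Next I would produce the components $C_j$. For a chosen decomposition of the generic $M$, each summand $M_j$ sits in some irreducible component $C_j$ of $\operatorname{mod}^{{\bf d}^j}_A(\Bbbk)$; I would take $C_j$ to contain a generic such $M_j$ and check that its general representation is indecomposable, which holds because the generic summands $M_j$ already form a dense constructible family of indecomposables inside $C_j$. The direct-sum-and-conjugate morphism $\Theta \colon \GL_{\bf d}(\Bbbk) \times C_1 \times \cdots \times C_t \to \operatorname{mod}^{\bf d}_A(\Bbbk)$ has irreducible source, so $\overline{C_1 \oplus \cdots \oplus C_t} = \overline{\operatorname{im}\Theta}$ is irreducible and closed in $\operatorname{mod}^{\bf d}_A(\Bbbk)$. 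By construction $V \subseteq \operatorname{im}\Theta$, hence $C = \overline V \subseteq \overline{C_1 \oplus \cdots \oplus C_t}$; since $C$ is a \emph{maximal} irreducible closed subset and the right-hand side is irreducible closed, the two coincide, giving $C = \overline{C_1 \oplus \cdots \oplus C_t}$.

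The delicate point is that $\overline{C_1 \oplus \cdots \oplus C_t}$ really is an irreducible component (i.e.\ maximal) and that each $C_j$ is a component, which is equivalent to the generic vanishing $\operatorname{Ext}^1_A(M_i, M_j) = 0$ for $i \neq j$. I would prove this criterion via Voigt's lemma, which gives at any $M$ the estimate $\dim_M \operatorname{mod}^{\bf d}_A(\Bbbk) - \dim \big(\GL_{\bf d}(\Bbbk)\cdot M\big) \leq \dim_\Bbbk \operatorname{Ext}^1_A(M,M)$, together with $\dim \big(\GL_{\bf d}(\Bbbk)\cdot M\big) = \dim \GL_{\bf d}(\Bbbk) - \dim_\Bbbk \operatorname{End}_A(M)$. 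Bookkeeping the dimension of $\overline{C_1 \oplus \cdots \oplus C_t}$ through the fibers of $\Theta$ (whose generic dimension is governed by the $\operatorname{Hom}$ spaces among the summands) against this local bound, and using additivity of $\operatorname{Ext}^1$ over direct sums, one sees that a nonzero generic cross-extension $\operatorname{Ext}^1_A(M_i, M_j)$ would supply normal directions transverse to $\overline{C_1 \oplus \cdots \oplus C_t}$ inside $\operatorname{mod}^{\bf d}_A(\Bbbk)$, so that this closure were strictly contained in a larger irreducible subset; as $C$ equals the closure and is a component, the cross-Exts must vanish, and the same count shows each $C_j$ is itself a component. This Ext-vanishing/tangent-space comparison is the main obstacle and is exactly what makes the decomposition canonical.

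Finally, uniqueness is Krull--Schmidt applied generically. If $C = \overline{C_1 \oplus \cdots \oplus C_t} = \overline{C'_1 \oplus \cdots \oplus C'_s}$ with both families having indecomposable generic representations, then a generic $M \in C$ admits two expressions as a direct sum of indecomposables whose summands are drawn densely from the $C_j$ and from the $C'_k$ respectively; the uniqueness clause of the Krull--Schmidt theorem matches the summands up to isomorphism and reordering. Since a component with indecomposable generic point is recovered as the closure of the $\GL_{\bf d}(\Bbbk)$-orbits of those generic indecomposable summands, the families $(C_j)_j$ and $(C'_k)_k$ agree up to reordering, which is the asserted uniqueness.
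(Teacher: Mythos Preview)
The paper does not supply a proof; the theorem is quoted from \cite{CBS} and used as a black box in the appendix. Your outline tracks the Crawley--Boevey--Schr\"oer argument, but there is a genuine gap in the second paragraph. You select each $C_j$ as an irreducible component containing the $j$-th summand of \emph{one} fixed generic $M$, and then assert $V \subseteq \image \Theta$. That inclusion requires that \emph{every} $M' \in V$ decompose with its $j$-th summand lying in the \emph{same} $C_j$; nothing you have said prevents the summands from landing in different components of ${\rm mod}^{{\bf d}^j}_{A}(\Bbbk)$ as $M'$ varies, and your claim that ``the generic summands $M_j$ already form a dense constructible family'' presupposes a well-defined summand morphism $V \to {\rm mod}^{{\bf d}^j}_{A}(\Bbbk)$ that does not exist (Krull--Schmidt is only canonical up to isomorphism and reordering). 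The standard repair reverses the logic: take $\Theta$ on the full source $\GL_{\bf d}(\Bbbk) \times \prod_j {\rm mod}^{{\bf d}^j}_{A}(\Bbbk)$, observe $V \subseteq \image\Theta$, and use irreducibility of $V$ to locate a single irreducible component $\GL_{\bf d}(\Bbbk) \times \prod_j C_j$ of the source whose image is dense in $V$. Generic indecomposability of each $C_j$ then follows \emph{a posteriori}: a generic point of $C$ is $\bigoplus N_j$ with $N_j$ generic in $C_j$, and since this generic point has exactly $t$ indecomposable summands, Krull--Schmidt forces each $N_j$ to be indecomposable.

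Your third paragraph is superfluous for this statement. Once $C = \overline{C_1 \oplus \cdots \oplus C_t}$ is established with $C$ a component by hypothesis, there is nothing further to check: in the corrected argument the $C_j$ are components by construction. The ${\rm Ext}^1$-vanishing criterion and the Voigt-lemma dimension count you sketch are the content of Theorem~\ref{CBS2}, not of Theorem~\ref{CBS1}; you have run the two together.
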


\begin{thm}\cite[Theorem 1.2.]{CBS}\label{CBS2}
    If  $C_j$ is an irreducible component of ${\rm mod}^{{\bf d}^j}_{A}(\Bbbk)$ for each $j$, then $\overline{C_1 \oplus \cdots \oplus C_t}$ is an irreducible component of ${\rm mod}^{{\bf d}^1+\dots+{\bf d}^t}_{A}(\Bbbk)$ if and only if 
    $\min\left\{ \dim{\rm Ext}^1_A(M_i, M_j) ~\vert~  M_i \in C_i, M_j \in C_j\right\}=0$ for all $i \neq j$. 
\end{thm}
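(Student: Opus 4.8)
The plan is to prove the statement by reducing to two summands and then comparing the dimension of the direct-sum locus with that of the ambient component, since whether the closure of such a locus is maximal is governed entirely by a dimension count. First I would reduce to the case $t=2$ by induction on $t$: by the inductive hypothesis $\overline{C_2\oplus\cdots\oplus C_t}$ is an irreducible component with generic module $M_2\oplus\cdots\oplus M_t$, and since generic $\mathrm{Ext}^1$ is additive over direct summands the criterion for the full collection reduces to the two-summand criterion applied to $C_1$ and this component. Throughout write $D:=\overline{C_1\oplus C_2}$; as the closure of the image of the irreducible variety $\GL_{{\bf d}^1+{\bf d}^2}(\Bbbk)\times C_1\times C_2$ under the direct-sum-and-conjugation map, $D$ is irreducible, closed and $\GL$-stable, hence contained in some irreducible component $Z$ of ${\rm mod}^{{\bf d}^1+{\bf d}^2}_A(\Bbbk)$. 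Since $D\subseteq Z$ with both irreducible and closed, $D$ is an irreducible component if and only if $\dim D=\dim Z$, so the whole proof amounts to computing these two dimensions and reading off when they agree.

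Next I would establish a dimension formula for $D$. Fix generic $M_i\in C_i$, which are indecomposable by hypothesis, and analyse the generic fibre of the direct-sum map over a point isomorphic to $M_1\oplus M_2$. By Krull--Schmidt together with genericity, the summands of such a fibre are forced to be conjugates of $M_1$ and $M_2$, so the fibre is assembled from the two orbits $\mathcal{O}_{M_1}$, $\mathcal{O}_{M_2}$ and a coset of ${\rm Aut}(M_1\oplus M_2)$. Using $\dim\mathcal{O}_{M_i}=\dim\GL_{{\bf d}^i}-\dim{\rm End}_A(M_i)$ and $\dim{\rm End}_A(M_1\oplus M_2)=\sum_{a,b}\dim\Hom_A(M_a,M_b)$, this yields, with ${\bf d}={\bf d}^1+{\bf d}^2$,
\[
\dim D=\dim\GL_{{\bf d}}-\dim\GL_{{\bf d}^1}-\dim\GL_{{\bf d}^2}+\dim C_1+\dim C_2-\dim\Hom_A(M_1,M_2)-\dim\Hom_A(M_2,M_1).
\]
The coincident case $M_1\cong M_2$ requires minor extra bookkeeping, but the formula is unchanged. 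This step is computational but routine.

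The heart of the argument is to compare $\dim Z$ with $\dim D$ and to identify their difference with generic $\mathrm{Ext}$. Using Voigt's description of the tangent space, $T_{M}{\rm mod}^{{\bf d}}_A=Z^1$ with $\dim T_M=\dim\mathcal{O}_M+\dim{\rm Ext}^1_A(M,M)$ for $M=M_1\oplus M_2$, together with the additivity ${\rm Ext}^1_A(M,M)=\bigoplus_{a,b}{\rm Ext}^1_A(M_a,M_b)$, gives an upper bound $\dim Z\le\dim T_M$. Substituting the formula above and cancelling the $\Hom$ and $\GL$ contributions, the resulting gap reduces to the generic self-ext defects of $C_1,C_2$ plus $\dim{\rm Ext}^1_A(M_1,M_2)+\dim{\rm Ext}^1_A(M_2,M_1)$. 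For the sharp statement I would then produce, in the reverse direction, an explicit family of deformations of $M_1\oplus M_2$ inside ${\rm mod}^{{\bf d}}_A$ transverse to $D$: parametrising the middle terms of the short exact sequences $0\to M_1\to E\to M_2\to 0$ and $0\to M_2\to E'\to M_1\to 0$ produces irreducible sets that strictly enlarge $D$ exactly when the corresponding generic $\mathrm{Ext}$ is nonzero. Combining this enlargement (a lower bound on $\dim Z$) with the tangent-space upper bound forces $\dim Z-\dim D=\mathrm{ext}(C_1,C_2)+\mathrm{ext}(C_2,C_1)$, where $\mathrm{ext}(C_i,C_j):=\min\{\dim{\rm Ext}^1_A(M_i,M_j)\mid M_i\in C_i,\,M_j\in C_j\}$; hence $D=Z$ if and only if both generic $\mathrm{Ext}$ groups vanish, which is the claimed criterion.

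I expect the main obstacle to be precisely this last comparison. The naive bound $\dim Z\le\dim T_M$ carries slack arising from possible singularities of ${\rm mod}^{{\bf d}^i}_A$ at the generic points $M_i$, so it cannot on its own pin down $\dim Z$. Removing this slack requires the transversality and extension-variety construction, namely showing that nonsplit extensions genuinely move $M_1\oplus M_2$ out of $D$ and account for the full codimension. This Bongartz-style deformation argument is the delicate and technical core of the proof.
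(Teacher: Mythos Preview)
The paper does not prove this theorem at all; it is quoted verbatim from Crawley--Boevey and Schr\"oer \cite{CBS} as a black box, and then applied in the proof of Theorem~\ref{prop:2x2general} to identify the general representation in each irreducible component of $E_C(\nu)$ for $\overrightarrow{G}_{(2,2)}$. There is therefore no proof in the paper against which to compare your proposal.

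As a side remark on your outline: you write ``Fix generic $M_i\in C_i$, which are indecomposable by hypothesis,'' but Theorem~\ref{CBS2} does not assume the $C_j$ have indecomposable generic module; that is the conclusion of Theorem~\ref{CBS1}, not a hypothesis here. Your inductive reduction to $t=2$ likewise tacitly uses this. The actual argument in \cite{CBS} handles arbitrary irreducible components $C_j$ directly and does not pass through an indecomposability assumption, so if you wish to reconstruct their proof you would need to remove that reliance.
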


Our main result in this appendix is the following. 

\begin{thm}\label{prop:2x2general}
    For $\nu = - \sum_{i \in I} \nu_i \alpha_i \in -Q_+$,

    the general representation in $(\nu;r_1,r_2)\subset E_C(\nu)$ is isomorphic to the following. 
    Assume $\nu_1+\nu_4 \geq \nu_2+\nu_3$. 
    \begin{enumerate}
        \item If $\nu_{1} \leq \nu_{2} \leq \nu_{3}$, 
        \[
        (\Mon)^{\oplus \nu_1-r_1} \oplus (\Mfo)^{\oplus \nu_4-r_2} \oplus (\Mse)^{\oplus \nu_2-r_1} \oplus (\Mei)^{\oplus \nu_3-r_1} \oplus (\Mel)^{\oplus r_1}
        \]

        \item If $\nu_{2} \leq \nu_{1} \leq \nu_{3}$ and $ r_{1} \leq \nu_{2}$, 
        \[
        (\Mon)^{\oplus \nu_1-r_1} \oplus (\Mfo)^{\oplus \nu_4-r_2} \oplus (\Mse)^{\oplus \nu_2-r_1} \oplus (\Mei)^{\oplus \nu_3-r_1} \oplus (\Mel)^{\oplus r_1}
        \]

        \item If $\nu_{2} \leq \nu_{1} \leq \nu_{3}$ and $\nu_{2} \leq r_{1}$, 
        \[
        (\Mon)^{\oplus \nu_1-r_1} \oplus (\Mfo)^{\oplus \nu_4-r_2} \oplus (\Msi)^{\oplus r_1-\nu_2} \oplus (\Mei)^{\oplus \nu_3-r_1} \oplus (\Mel)^{\oplus \nu_2}
        \]

        \item If $\nu_{2} \leq \nu_{3} \leq \nu_{1}$ and $ r_{1} \leq \nu_{2}$, 
        \[
        (\Mon)^{\oplus \nu_1-r_1} \oplus (\Mfo)^{\oplus \nu_4-r_2} \oplus (\Mse)^{\oplus \nu_2-r_1} \oplus (\Mei)^{\oplus \nu_3-r_1} \oplus (\Mel)^{\oplus r_1}
        \]

        \item If $\nu_{2} \leq \nu_{3} \leq \nu_{1}$ and $\nu_{2} \leq r_{1} \leq \nu_{3}$, 
        \[
        (\Mon)^{\oplus \nu_1-r_1} \oplus (\Mfo)^{\oplus \nu_4-r_2} \oplus (\Msi)^{\oplus r_1-\nu_2} \oplus (\Mei)^{\oplus \nu_3-r_1} \oplus (\Mel)^{\oplus \nu_2}
        \]

        \item If $\nu_{2} \leq \nu_{3} \leq \nu_{1}$ and $\nu_{3} \leq r_{1}$, 
        \[
        (\Mon)^{\oplus \nu_1-r_1} \oplus (\Mfo)^{\oplus \nu_4-r_2} \oplus (\Msi)^{\oplus r_1-\nu_2} \oplus (\Mfi)^{\oplus r_1-\nu_3} \oplus (\Mel)^{\oplus r_2}
        \]

        \item If $\nu_{1} \leq \nu_{3} \leq \nu_{2}$, 
        \[
        (\Mon)^{\oplus \nu_1-r_1} \oplus (\Mfo)^{\oplus \nu_4-r_2} \oplus (\Mse)^{\oplus \nu_2-r_1} \oplus (\Mei)^{\oplus \nu_3-r_1} \oplus (\Mel)^{\oplus r_1}
        \]

    \item If $\nu_{3} \leq \nu_{1} \leq \nu_{2}$ and $ r_{1} \leq \nu_{3}$, 
        \[
        (\Mon)^{\oplus \nu_1-r_1} \oplus (\Mfo)^{\oplus \nu_4-r_2} \oplus (\Mse)^{\oplus \nu_2-r_1} \oplus (\Mei)^{\oplus \nu_3-r_1} \oplus (\Mel)^{\oplus r_1}
        \]
        
        \item If $\nu_{3} \leq \nu_{1} \leq \nu_{2}$ and $\nu_{3} \leq r_{1}$, 
        \[
        (\Mon)^{\oplus \nu_1-r_1} \oplus (\Mfo)^{\oplus \nu_4-r_2} \oplus (\Mfi)^{\oplus r_1-\nu_3} \oplus (\Mse)^{\oplus \nu_2-r_1} \oplus (\Mel)^{\oplus \nu_3}
        \]

        \item If $\nu_{3} \leq \nu_{2} \leq \nu_{1}$ and $r_{1} \leq \nu_{3}$, 
        \[
        (\Mon)^{\oplus \nu_1-r_1} \oplus (\Mfo)^{\oplus \nu_4-r_2} \oplus (\Mse)^{\oplus \nu_2-r_1} \oplus (\Mei)^{\oplus \nu_3-r_1} \oplus (\Mel)^{\oplus r_1}
        \]

        \item If $\nu_{3} \leq \nu_{2} \leq \nu_{1}$ and $\nu_{3} \leq r_{1} \leq \nu_{2} $, 
        \[
        (\Mon)^{\oplus \nu_1-r_1} \oplus (\Mfo)^{\oplus \nu_4-r_2} \oplus (\Mfi)^{\oplus r_1-\nu_3} \oplus (\Mse)^{\oplus \nu_2-r_1} \oplus (\Mel)^{\oplus \nu_3}
        \]

        \item If $\nu_{3} \leq \nu_{2} \leq \nu_{1}$ and $\nu_{2} \leq r_{1}$, 
        \[
        (\Mon)^{\oplus \nu_1-r_1} \oplus (\Mfo)^{\oplus \nu_4-r_2} \oplus (\Mfi)^{\oplus r_1-\nu_3} \oplus (\Msi)^{\oplus r_1-\nu_2} \oplus (\Mel)^{\oplus r_2}
        \]

    \end{enumerate}

    Assume $\nu_1+\nu_4 < \nu_2+\nu_3$ and $\nu_2 \leq \nu_3$ (in this case $r_1=\nu_1$ and $r_2=\nu_4$). 
    \begin{enumerate}
        \item If $\nu_{4} \leq \nu_{1} \leq \nu_{2} \leq \nu_{3}$, 
        $
        (\Mtw)^{\oplus \nu_2-\nu_1} \oplus (\Mth)^{\oplus \nu_3-\nu_1} \oplus (\Mni)^{\oplus \nu_1-\nu_4} \oplus (\Mel)^{\oplus \nu_4}
        $

        \item If $\nu_{4} \leq \nu_{2} \leq \nu_{1} \leq \nu_{3}$, 
        $
        (\Mth)^{\oplus \nu_3-\nu_1} \oplus (\Msi)^{\oplus \nu_1-\nu_2} \oplus (\Mni)^{\oplus \nu_2-\nu_4} \oplus (\Mel)^{\oplus \nu_4}
        $

        \item If $\nu_{4} \leq \nu_{2} \leq \nu_{3} \leq \nu_{1}$, 
        $
        (\Mfi)^{\oplus \nu_1-\nu_3} \oplus (\Msi)^{\oplus \nu_1-\nu_2} \oplus (\Mni)^{\oplus \nu_2+\nu_3-\nu_1-\nu_4} \oplus (\Mel)^{\oplus \nu_4}
        $
        
        \item If $\nu_{2} \leq \nu_{4} \leq \nu_{1} \leq \nu_{3}$, 
        $(\Mth)^{\oplus \nu_2+\nu_3-\nu_1-\nu_4} \oplus (\Msi)^{\oplus \nu_1-\nu_2} \oplus (\Mei)^{\oplus \nu_4-\nu_2} \oplus (\Mel)^{\oplus \nu_2}$
        
        \item If $\nu_{1} \leq \nu_{4} \leq \nu_{2} \leq \nu_{3}$, 
        $(\Mtw)^{\oplus \nu_2-\nu_4} \oplus (\Mth)^{\oplus \nu_3-\nu_4} \oplus (\Mte)^{\oplus \nu_4-\nu_1} \oplus (\Mel)^{\oplus \nu_1}$

        \item If $\nu_{1} \leq \nu_{2} \leq \nu_{4} \leq \nu_{3}$, 
        $(\Mth)^{\oplus \nu_3-\nu_4} \oplus (\Mei)^{\oplus \nu_4-\nu_2} \oplus (\Mte)^{\oplus \nu_2-\nu_1} \oplus (\Mel)^{\oplus \nu_1}$

        \item If $\nu_{1} \leq \nu_{2} \leq \nu_{3} \leq \nu_{4}$
        $(\Mse)^{\oplus \nu_4-\nu_3} \oplus (\Mei)^{\oplus \nu_4-\nu_2} \oplus (\Mte)^{\oplus \nu_2+\nu_3-\nu_1-\nu_4} \oplus (\Mel)^{\oplus \nu_1}$

        \item If $\nu_2 \leq \nu_1 \leq \nu_4 \leq \nu_3$, 
        $(\Mth)^{\oplus \nu_2+\nu_3-\nu_1-\nu_4} \oplus (\Msi)^{\oplus \nu_1-\nu_2} \oplus (\Mei)^{\oplus \nu_4-\nu_2} \oplus (\Mel)^{\oplus \nu_2}$
    \end{enumerate}
    Note that because of the assumption $\nu_1+\nu_4 < \nu_2+\nu_3$, the cases $\nu_{2} \leq \nu_{4} \leq \nu_{3} \leq \nu_{1}$, $\nu_{2} \leq \nu_{3} \leq \nu_{4} \leq \nu_{1}$, 
    $\nu_{2} \leq \nu_{1} \leq \nu_{3} \leq \nu_{4}$ and $\nu_{2} \leq \nu_{3} \leq \nu_{1} \leq \nu_{4}$ do not occur.

    Assume $\nu_1+\nu_4 < \nu_2+\nu_3$ and $\nu_3 \leq \nu_2$ (in this case $r_1=\nu_1$ and $r_2=\nu_4$). 
    \begin{enumerate}
        \item If $\nu_{4} \leq \nu_{1} \leq \nu_{3} \leq \nu_{2}$, 
        $
        (\Mtw)^{\oplus \nu_2-\nu_1} \oplus (\Mth)^{\oplus \nu_3-\nu_1} \oplus (\Mni)^{\oplus \nu_1-\nu_4} \oplus (\Mel)^{\oplus \nu_4}
        $

        \item If $\nu_{4} \leq \nu_{3} \leq \nu_{1} \leq \nu_{2}$, 
        $
        (\Mtw)^{\oplus \nu_2-\nu_1} \oplus (\Mfi)^{\oplus \nu_1-\nu_3} \oplus (\Mni)^{\oplus \nu_3-\nu_4} \oplus (\Mel)^{\oplus \nu_4}
        $

        \item If $\nu_{4} \leq \nu_{3} \leq \nu_{2} \leq \nu_{1}$, 
        $
        (\Mfi)^{\oplus \nu_1-\nu_3} \oplus (\Msi)^{\oplus \nu_1-\nu_2} \oplus (\Mni)^{\oplus \nu_2+\nu_3-\nu_1-\nu_4} \oplus (\Mel)^{\oplus \nu_4}
        $
        
        \item If $\nu_{3} \leq \nu_{4} \leq \nu_{1} \leq \nu_{2}$, 
        $(\Mtw)^{\oplus \nu_2+\nu_3-\nu_1-\nu_4} \oplus (\Mfi)^{\oplus \nu_1-\nu_3} \oplus (\Mse)^{\oplus \nu_4-\nu_3} \oplus (\Mel)^{\oplus \nu_3}$
        
        \item If $\nu_{1} \leq \nu_{4} \leq \nu_{3} \leq \nu_{2}$, 
        $(\Mtw)^{\oplus \nu_2-\nu_4} \oplus (\Mth)^{\oplus \nu_3-\nu_4} \oplus (\Mte)^{\oplus \nu_4-\nu_1} \oplus (\Mel)^{\oplus \nu_1}$

        \item If $\nu_{1} \leq \nu_{3} \leq \nu_{4} \leq \nu_{2}$, 
        $(\Mtw)^{\oplus \nu_2-\nu_4} \oplus (\Mse)^{\oplus \nu_4-\nu_3} \oplus (\Mte)^{\oplus \nu_3-\nu_1} \oplus (\Mel)^{\oplus \nu_1}$

        \item If $\nu_{1} \leq \nu_{3} \leq \nu_{2} \leq \nu_{4}$, 
        $(\Mse)^{\oplus \nu_4-\nu_3} \oplus (\Mei)^{\oplus \nu_4-\nu_2} \oplus (\Mte)^{\oplus \nu_2+\nu_3-\nu_1-\nu_4} \oplus (\Mel)^{\oplus \nu_1}$

        \item If $\nu_{3} \leq \nu_{1} \leq \nu_{4} \leq \nu_{2}$, 
        $(\Mtw)^{\oplus \nu_2+\nu_3-\nu_1-\nu_4} \oplus (\Mfi)^{\oplus \nu_1-\nu_3} \oplus (\Mse)^{\oplus \nu_4-\nu_3} \oplus (\Mel)^{\oplus \nu_3}$
    \end{enumerate}
    Note that because of the assumption $\nu_1+\nu_4 < \nu_2+\nu_3$, the cases $\nu_{3} \leq \nu_{4} \leq \nu_{2} \leq \nu_{1}$, $\nu_{3} \leq \nu_{2} \leq \nu_{4} \leq \nu_{1}$, $\nu_{3} \leq \nu_{1} \leq \nu_{2} \leq \nu_{4}$ and $\nu_{3} \leq \nu_{2} \leq \nu_{1} \leq \nu_{4}$ do not occur.

\end{thm}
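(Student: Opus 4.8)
The plan is to determine the generic decomposition of each irreducible component by combining the two results of Crawley--Boevey and Schr\"oer with the fact that $\overrightarrow{G}_{(2,2)}$ is of finite representation type. By Theorem \ref{CBS1}, the general representation of any irreducible component of $E_C(\nu)$ is a direct sum $\bigoplus_i M_i^{\oplus a_i}$ of indecomposables, and since the algebra is representation-finite every indecomposable summand is one of the eleven modules $M_1,\dots,M_{11}$; conversely, by Theorem \ref{CBS2}, the closure $\overline{\bigoplus_i \cO_{M_i}^{\oplus a_i}}$ is an irreducible component exactly when ${\rm Ext}^1(M_i,M_j)=0$ for every pair of summands (the generic value of ${\rm Ext}^1$ on a component $\overline{\cO_{M_i}}$ being $\dim{\rm Ext}^1(M_i,M_j)$, and the self-extensions vanishing since every indecomposable of a representation-finite algebra is rigid). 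Thus, for each case, I would exhibit the stated decomposition and verify three things: that its dimension vector equals $\nu$, that it lies in the component $(\nu;r_1,r_2)$, and that it is itself an irreducible component.

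The placement in $(\nu;r_1,r_2)$ is the easy bookkeeping. For each indecomposable $M_i$ one records the two ranks $\rho_1(M_i):=\rank\begin{bmatrix} f_{12}& f_{13}\end{bmatrix}^T$ and $\rho_2(M_i):=\rank\begin{bmatrix} f_{24}& -f_{34}\end{bmatrix}$ on it; a direct inspection gives $\rho_1=1$ precisely on $M_5,M_6,M_9,M_{11}$ and $\rho_2=1$ precisely on $M_7,M_8,M_{10},M_{11}$, all other values being $0$. Because the structure maps of a direct sum are block diagonal, these ranks are additive, so the generic ranks of the proposed module are $\sum_i a_i\rho_1(M_i)$ and $\sum_i a_i\rho_2(M_i)$; using $r_1+r_2=\nu_2+\nu_3$ (Corollary \ref{cor:ex_irr}) one checks that these equal $r_1$ and $r_2$, and the same substitution verifies the dimension vector as well as the nonnegativity of the multiplicities under each case hypothesis.

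The essential input is the ${\rm Ext}^1$-table among $M_1,\dots,M_{11}$, which I would read off from the given Auslander--Reiten quiver. The mesh structure yields the translate $\tau$ (explicitly $\tau M_1=M_9$, $\tau M_2=M_8$, $\tau M_3=M_7$, $\tau M_5=M_3$, $\tau M_6=M_2$, $\tau M_9=M_{10}$, $\tau M_{10}=M_4$), and the Auslander--Reiten formula ${\rm Ext}^1(M,N)\cong D\,\overline{\Hom}(N,\tau M)$, together with ${\rm Ext}^1(P,-)=0$ for the projectives $M_4,M_7,M_8,M_{11}$ and ${\rm Ext}^1(-,I)=0$ for the injectives $M_1,M_5,M_6,M_{11}$, reduces each entry to a small $\Hom$-space computation. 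With this table in hand, the vanishing of ${\rm Ext}^1$ between the summands appearing in each case is a finite check.

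Finally, once a proposed module $N$ passes these checks, its orbit closure $\overline{\cO_N}$ is an irreducible component of $E_C(\nu)$ by Theorem \ref{CBS2}; since rank is lower semicontinuous, $\overline{\cO_N}\subseteq (\nu;r_1,r_2)$, and because $(\nu;r_1,r_2)$ is itself irreducible (Corollary \ref{cor:ex_irr}) the two components coincide, so $N$ is the general representation of $(\nu;r_1,r_2)$, with uniqueness guaranteed by Theorem \ref{CBS1}. I expect the main obstacle to be organizational rather than conceptual: the proof splits into the twelve orderings for $\nu_1+\nu_4\geq\nu_2+\nu_3$ and the two families of eight orderings for $\nu_1+\nu_4<\nu_2+\nu_3$, and in the latter regime the summand types change entirely, so the bulk of the work is the careful, repetitive verification of the rank identities and the pairwise ${\rm Ext}^1$-vanishing across all of these cases.
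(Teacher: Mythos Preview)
Your proposal is correct and follows essentially the same strategy as the paper: verify that each listed module has the right dimension vector and ranks $(r_1,r_2)$, then invoke Theorems~\ref{CBS1} and~\ref{CBS2} to reduce to the pairwise ${\rm Ext}^1$-vanishing among the summands, which together with the irreducibility of $(\nu;r_1,r_2)$ forces the orbit closure to coincide with the component. The only real difference is in how the ${\rm Ext}^1$-table is computed: the paper writes down the explicit projective resolutions of the eleven indecomposables (using that $M_4,M_7,M_8,M_{11}$ are projective) and reads off the vanishing directly, whereas you propose to use the Auslander--Reiten translate and the formula ${\rm Ext}^1(M,N)\cong D\,\overline{\Hom}(N,\tau M)$. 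Both methods work; the projective-resolution route is slightly more self-contained since it avoids identifying $\tau$ and the stable $\Hom$, while your route makes the structure of the AR quiver do the work. Your explicit remark on rigidity of the indecomposables (needed when a summand appears with multiplicity $>1$) is a point the paper leaves implicit.
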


\begin{proof}
    The representations listed above give required dimension vectors $(\nu_1,\nu_2,\nu_3,\nu_4)$ and ranks $(r_1,r_2)$. By Theorem \ref{CBS1} and \ref{CBS2}, it is enough to show that for each summand $M_{i}$ and $M_{j}$, we have $\textrm{Ext}^1(M_{i}, M_{j}) = \textrm{Ext}^1(M_{j}, M_{i})=0$ for $i\neq j$. This is directly checked by using projective resolutions of $M_i$ listed below, where 
    $\Mfo, \Mse, \Mei, \Mel$  are projective.

    \begin{align*}    
    0 \to \Mfo \to \Mse \oplus \Mei \to \Mel \to \Mon \to 0,& \\
    0 \to \Mfo \to \Mse \to \Mtw \to 0,& \\
    0 \to \Mfo \to \Mei \to \Mth \to 0,& \\
    0 \to \Mei \to \Mel \to \Mfi \to 0,& \\
    0 \to \Mse \to \Mel \to \Msi \to 0,& \\
    0 \to \Mfo \to \Mel \to \Mni \to 0,& \\
    0 \to \Mfo \to \Mse \oplus \Mei \to \Mte \to 0.& 
    \end{align*}    
\end{proof}

\end{document}